\makeatletter \@addtoreset{equation}{section} \makeatother
\numberwithin{equation}{section}
\newtheorem{theorem}{Theorem}[section]
\newtheorem{lemma}[theorem]{Lemma}
\newtheorem{proposition}[theorem]{Proposition}
\newtheorem{remark}[theorem]{Remark}
\newtheorem{corollary}[theorem]{Corollary}
\numberwithin{equation}{section}
\begin{document}

\title[Weighted $p$-Laplace equations]
{Caffarelli-Kohn-Nirenberg-type inequalities related to weighted $p$-Laplace equations}

\author[S. Deng]{Shengbing Deng$^{\ast}$}
\address{\noindent Shengbing Deng (Corresponding author) \newline
School of Mathematics and Statistics, Southwest University,
Chongqing 400715, People's Republic of China}\email{shbdeng@swu.edu.cn}

\author[X. Tian]{Xingliang Tian} 
\address{\noindent Xingliang Tian  \newline
School of Mathematics and Statistics, Southwest University,
Chongqing 400715, People's Republic of China.}\email{xltian@email.swu.edu.cn}

\thanks{$^{\ast}$ Corresponding author}

\thanks{2020 {\em{Mathematics Subject Classification.}} 35P30, 26D10.}

\thanks{{\em{Key words and phrases.}} Optimizers; Caffarelli-Kohn-Nirenberg-type inequality; Weighted $p$-Laplace equation; Remainder term}

\allowdisplaybreaks
%\maketitle

\begin{abstract}
{\tiny We use a suitable transform related to Sobolev inequality to investigate the sharp constants and optimizers for some Caffarelli-Kohn-Nirenberg-type inequalities which are related to the weighted $p$-Laplace equations.
Moreover, we give the classification to the linearized problem related to the radial extremals. As an application, we investigate the gradient type remainder term of related inequality by using spectral estimate combined with a compactness argument which extends the work of Figalli and Zhang (Duke Math. J. 2022) at least for radial case.

    }
\end{abstract}

\vspace{3mm}

\maketitle

\section{{\bfseries Introduction and statement the main results}}\label{sectir}

    In this paper, we consider the classical radial solutions of the following weighted equation:
    \begin{equation}\label{Ppwh}
    -{\rm div}(|x|^{\alpha}|\nabla u|^{p-2}\nabla u)=|x|^{\beta} u^{p^*_{\alpha,\beta}-1} ,\quad u>0 \quad \mbox{in}\quad \mathbb{R}^N,
    \end{equation}
    where
    \begin{equation}\label{defpp}
    1<p<N,\quad p-N<\alpha<p+\beta,\quad p^*_{\alpha,\beta}=\frac{p(N+\beta)}{N-p+\alpha}.
    \end{equation}

    This problem, for $\alpha\neq 0$ or $\beta\neq 0$, generalizes the well-known equation which involves the critical Sobolev exponent
    \begin{equation}\label{bce}
    -{\rm div}(|\nabla u|^{p-2}\nabla u)=u^{p^*-1},\quad u>0\quad \mbox{in}\quad \mathbb{R}^N,
    \end{equation}
    where $p^*=\frac{Np}{N-p}$. Caffarelli et al. \cite{CGS89} proved that all the solutions are indeed the only ones of Aubin-Talenti functions \cite{Ta76} when $p=2$. Recently, V\'{e}tois \cite{Ve16} obtained the same conclusion for $1<p<2$ and Sciunzi \cite{Sc16} for $2<p<N$ however in \[\mathcal{D}^{1,p}_0(\mathbb{R}^N):=\{u\in L^{p^*}(\mathbb{R}^N): |\nabla u|\in L^p(\mathbb{R}^N)\},\]
    and it is not clear whether Aubin-Talenti functions have Moser index equal to one so far. Catino et al. \cite{CMR22} and Ou \cite{Ou22} removed the restricted condition $u\in\mathcal{D}^{1,p}_0(\mathbb{R}^N)$ for some special cases.
    We summarize those results as follows: let $1<p<N$ and $u\in\mathcal{D}^{1,p}_0(\mathbb{R}^N)$ be a solution to (\ref{bce}), then there exist $\lambda>0$ and $z\in\mathbb{R}^N$ such that
    \begin{equation*}
    u(x)=V_{\lambda,z}(x):=\frac{\gamma_{N,p}\lambda^{\frac{N-p}{p}}}
    {(1+\lambda^{\frac{p}{p-1}}|x-z|^{\frac{p}{p-1}})^{\frac{N-p}{p}}},\quad \gamma_{N,p}=\left[N\left(\frac{N-p}{p-1}\right)^{p-1}\right]^{\frac{N-p}{p^2}},
    \end{equation*}
    and they are only extremal functions (up to scalar multiplications) for the well-known Sobolev inequality
    \begin{equation}\label{bcesi}
    \int_{\mathbb{R}^N}|\nabla v|^p\geq S\left(\int_{\mathbb{R}^N}|v|^{p^*}\right)^\frac{p}{p^*},\quad \forall v\in \mathcal{D}^{1,p}_0(\mathbb{R}^N),
    \end{equation}
    for some constant $S>0$. Pistoia and Vaira \cite{PV21} proved that the solution $V\in \mathcal{D}^{1,p}_0(\mathbb{R}^N)$ of equation (\ref{bce}) is non-degenerate in the sense that all solutions of equation
    \begin{equation}\label{Ppwhlp}
    \begin{split}
    & -{\rm div}(|\nabla V|^{p-2}\nabla \varphi)-(p-2){\rm div}(|\nabla V|^{p-4}(\nabla V\cdot\nabla \varphi)\nabla V)
    =\left(p^*-1\right)V^{p^*-2}\varphi\quad \mbox{in}\ \mathbb{R}^N
    \end{split}
    \end{equation}
    in the space $\mathcal{D}^{1,2}_{0,*}(\mathbb{R}^N)$, are linear combination of the functions
    \begin{equation}\label{lpsy}
    Z_0(x)=\frac{N-p}{p}V+x\cdot \nabla V,\quad Z_i(x)=\frac{\partial V(x)}{\partial x_i},\quad i=1,\ldots,N.
    \end{equation}
    Here $\mathcal{D}^{1,2}_{0,*}(\mathbb{R}^N)$ is the weighted Sobolev space which is defined as the completion of $C^\infty_c(\mathbb{R}^N)$ with respect to the norm
    \[
    \|\varphi\|_{\mathcal{D}^{1,2}_{0,*}(\mathbb{R}^N)}:=\left(\int_{\mathbb{R}^N}|\nabla V|^{p-2}|\nabla \varphi|^2dx\right)^{\frac{1}{2}}.
    \]
    Pistoia and Vaira proved that $\mathcal{D}^{1,2}_{0,*}(\mathbb{R}^N)\hookrightarrow L^{2}_{0,*}(\mathbb{R}^N)$ continuously, where $L^{2}_{0,*}(\mathbb{R}^N)$ is the set of measurable functions $\varphi: \mathbb{R}^N\to \mathbb{R}$ whose norm
    \[
    \|\varphi\|_{L^{2}_{0,*}(\mathbb{R}^N)}:=\left(\int_{\mathbb{R}^N}
    V^{p^*-2}\varphi^2dx\right)^{\frac{1}{2}}.
    \]
    Furthermore, Figalli and Neumayer \cite{FN19} proved that  $\mathcal{D}^{1,2}_{0,*}(\mathbb{R}^N)\hookrightarrow \hookrightarrow L^{2}_{0,*}(\mathbb{R}^N)$ compactly when $2\leq p<N$ then they showed the solutions of \eqref{Ppwhlp} in $L^{2}_{0,*}(\mathbb{R}^N)$ are linear combination of the functions
    $Z_0$ and $Z_i$ $(i=1,\ldots,N)$. Figalli and Zhang \cite{FZ22} proved that  $\mathcal{D}^{1,2}_{0,*}(\mathbb{R}^N)\hookrightarrow \hookrightarrow L^{2}_{0,*}(\mathbb{R}^N)$ compactly for all $1< p<N$ and the non-degenerate conclusion of \cite{FN19} also holds.

    If $p-N<\alpha=p+\beta$, (\ref{Ppwh}) is related to the well known weighted Hardy inequality:
    \begin{equation}\label{phi}
    \int_{\mathbb{R}^N}|x|^\alpha|\nabla u|^p dx\geq \left(\frac{N-p+\alpha}{p}\right)^p\int_{\mathbb{R}^N}|x|^{\alpha-p}|u|^pdx,\quad \forall u\in C^\infty_0(\mathbb{R}^N),
    \end{equation}
    where $[(N-p+\alpha)/p]^p$ is sharp and not attained which implies that (\ref{Ppwh}) does not exist solutions in this case, see \cite[Lemma 2.3]{DR01}. If $\alpha=0$, $p=N$ and $\beta>-N$, the problem
    \begin{equation*}\label{Ppwhe}
    -{\rm div}(|\nabla u|^{N-2}\nabla u)
    =|x|^{\beta} e^u\quad \mbox{in}\quad \mathbb{R}^N,
    \quad \int_{\mathbb{R}^N}|x|^{\beta} e^u<\infty,
    \end{equation*}
    has its own interest, see \cite{Es21,PT01}.

    Let $\alpha=-pa$, $\beta=-bh$, then for $1<p<N$, (\ref{Ppwh}) is equivalent to
    \begin{equation}\label{Ppwht}
    -{\rm div}(|x|^{-pa}|\nabla u|^{p-2}\nabla u)=|x|^{-bh} u^{h-1} ,\quad u>0 \quad \mbox{in}\quad \mathbb{R}^N,
    \end{equation}
    where
    \begin{equation*}
    -\infty<a<\frac{N-p}{p},\quad a-\frac{N-p}{p}<b<a+1,\quad h=\frac{Np}{N-p(1+a-b)},
    \end{equation*}
    which is related the classical Caffarelli-Kohn-Nirenberg (we write (CKN) for short) inequality \cite{CKN84}:
    \begin{equation}\label{cknpi}
    \||x|^{-a}\nabla u\|_{L^p(\mathbb{R}^N)}\geq C\||x|^{-b}u\|_{L^h(\mathbb{R}^N)},\quad \forall u\in C^\infty_0(\mathbb{R}^N).
    \end{equation}
    For equation (\ref{Ppwht}) with $p=2$, Dolbeault et al. \cite{DEL16} proved an optimal rigidity result in the range $2<h<\frac{2N}{N-2}$. Namely, assuming the integrability condition $\int_{\mathbb{R}^N}|x|^{-bh}u^hdx<\infty$, they showed this equation has a unique (therefore radial) nonnegative solution either for  
    \begin{equation*}
    0\leq a<\frac{N-2}{2}\quad\mbox{and}\quad a\leq b<a+1,
    \end{equation*}
    or for
    \begin{equation*}
    a<0\quad\mbox{and}\quad b^{(1)}_{FS}(a)\leq b<a+1,
    \end{equation*}
    where \[b^{(k)}_{FS}(a):=\frac{N}{2}\left[1+\frac{4k(N-2+k)}{(N-2-2a)^2}\right]^{-1/2}-\frac{N-2-2a}{2},\quad k\in \mathbb{N}^+.\]
    When $a<0$ and $a\leq b<b^{(1)}_{FS}(a)$, Felli and Schneider \cite{FS03} have previously shown that the best constant in (\ref{cknpi}) with $p=2$ is achieved by non-radial functions only and, as a byproduct, (\ref{Ppwht}) with $p=2$ has non-radial nonnegative solutions and uniqueness is broken. Moreover, if $b=b^{(k)}_{FS}(a)$ for some $k\in \mathbb{N}^+$, non-radial bifurcation occurs, see \cite{BCG21,DGG17,GGN13}. It is worth to mention that for $p\neq 2$, Ciraolo and Corso \cite{CC22} proved that the positive solutions to \eqref{Ppwht} are radial symmetric for some special case which extends the result of Dolbeault et al. \cite{DEL16} to a general $p$.
    For more recent results about the (CKN) inequality, see \cite{ADG22,AB17,CFL21,DLL18,LL17} and the references therein.

    Coming back to (\ref{Ppwh}), in present paper, we are mainly concerned the linearized problem related to the radial solutions of (\ref{Ppwh}).
    Define $\mathcal{D}^{1,p}_\alpha(\mathbb{R}^N)$ as the completion of $C^\infty_c(\mathbb{R}^N)$ with the norm
    \begin{equation}\label{defnormd1pa}
    \|\phi\|_{\mathcal{D}^{1,p}_\alpha}=\left(\int_{\mathbb{R}^N}|x|^{\alpha}|\nabla \phi|^pdx\right)^{\frac{1}{p}}.
    \end{equation}
    Then we quote the following result by \cite{CKN84}, the so-called Caffarelli-Kohn-Nirenberg inequality, which, in the radial case, %extends the classical Sobolev inequality and also
    partly extends the Hardy-Littlewood-Sobolev inequality \cite{Li83}.
    \begin{theorem}\label{thmPbcm}
    Assume $1<p<N$ and $p-N<\alpha<p+\beta$. Let $u\in \mathcal{D}^{1,p}_\alpha(\mathbb{R}^N)$ be a radial function, then we have
    \begin{equation}\label{Ppbcm}
    \int_{\mathbb{R}^N}|x|^{\alpha}|\nabla u|^p dx \geq S^{rad}_p(\alpha,\beta)\left(\int_{\mathbb{R}^N}|x|^{\beta}|u|^{p^*_{\alpha,\beta}} dx\right)^{\frac{p}{p^*_{\alpha,\beta}}},
    \end{equation}
    for some positive constant $S^{rad}_p(\alpha,\beta)$. The explicit form of the best constant in (\ref{Ppbcm}) is
    \begin{equation}\label{sbc}
    S^{rad}_p(\alpha,\beta)
    =\left(\frac{p+\beta-\alpha}{p}\right)^{\frac{pN-p+(p-1)\beta+\alpha}{N+\beta}}
    \left(\frac{2\pi^{\frac{N}{2}}}{\Gamma(\frac{N}{2})}\right)^{\frac{p+\beta-\alpha}{N+\beta}}
    C_p\left(\frac{p(N+\beta)}{p+\beta-\alpha}\right),
    \end{equation}
    where \[C_p(K)=K\left(\frac{K-p}{p-1}\right)^{p-1}
    \left(\frac{\Gamma(K/p)\Gamma(1+K-K/p)}{\Gamma(K)\Gamma(1+K/2)}\frac{\Gamma(K/2)}{2}\right)^{p/K}.\] % is the best constant of embedding $D^{1,p}(\mathbb{R}^K)\hookrightarrow L^{\frac{pK}{K-p}}(\mathbb{R}^K)$ for $K>p$.
    Moreover the extremal radial functions which achieve $S^{rad}_p(\alpha,\beta)$ in (\ref{Ppbcm}) are given by
    \begin{equation}\label{pbcm}
    W_{\lambda,\alpha,\beta}(x)=\frac{A\lambda^{\frac{N-p+\alpha}{p}}}
    {(1+\lambda^{\frac{p+\beta-\alpha}{p-1}}|x|^{\frac{p+\beta-\alpha}{p-1}})^{\frac{N-p+\alpha}{p+\beta-\alpha}}},
    \end{equation}
    for any $A\in\mathbb{R}\backslash\{0\}$ and $\lambda>0$.
    \end{theorem}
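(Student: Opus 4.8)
The plan is to reduce inequality \eqref{Ppbcm} to a sharp one-dimensional (Bliss-type) inequality by a single power change of the radial variable, and then to read off both the optimal constant and the extremals from the one-dimensional problem. First, passing to polar coordinates, for a radial $u=u(r)$, $r=|x|$, one has
$$\int_{\mathbb{R}^N}|x|^{\alpha}|\nabla u|^p\,dx=\omega_{N-1}\int_0^\infty|u'(r)|^p r^{N-1+\alpha}\,dr,\qquad \int_{\mathbb{R}^N}|x|^{\beta}|u|^{p^*_{\alpha,\beta}}\,dx=\omega_{N-1}\int_0^\infty|u(r)|^{p^*_{\alpha,\beta}}r^{N-1+\beta}\,dr,$$
with $\omega_{N-1}=2\pi^{N/2}/\Gamma(N/2)$. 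Then I would set $m:=\tfrac{p}{p+\beta-\alpha}>0$ (positive by \eqref{defpp}), substitute $r=s^m$ and put $v(s):=u(s^m)$; since $u'(r)=(ms^{m-1})^{-1}v'(s)$ and $dr=ms^{m-1}\,ds$, this gives
$$\int_0^\infty|u'(r)|^p r^{N-1+\alpha}\,dr=m^{1-p}\int_0^\infty|v'(s)|^p s^{M-1}\,ds,\qquad \int_0^\infty|u(r)|^{p^*_{\alpha,\beta}}r^{N-1+\beta}\,dr=m\int_0^\infty|v(s)|^{p^*_{\alpha,\beta}}s^{M-1}\,ds,$$
where the choice of $m$ is precisely what equalizes the two weights to $s^{M-1}$ with effective dimension $M:=\frac{p(N+\beta)}{p+\beta-\alpha}>p$ (using $\alpha>p-N$) and, crucially, $p^*_{\alpha,\beta}=\frac{pM}{M-p}$, i.e.\ $p^*_{\alpha,\beta}$ is the critical Sobolev exponent of the (in general non-integer) dimension $M$. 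One also checks the elementary fact that $u\mapsto v$ is, up to the explicit constants above, an isometric bijection between the radial part of $\mathcal{D}^{1,p}_\alpha(\mathbb{R}^N)$ and the weighted space obtained by completing smooth functions under $\int_0^\infty|v'|^p s^{M-1}$, so that density and the family of extremals are preserved.

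Next I would invoke the classical sharp one-dimensional inequality (Bliss; Talenti \cite{Ta76}), valid for every real $M>p$:
$$\int_0^\infty|v'(s)|^p s^{M-1}\,ds\ \geq\ C_p(M)\left(\int_0^\infty|v(s)|^{\frac{pM}{M-p}}s^{M-1}\,ds\right)^{\frac{M-p}{M}},$$
with equality if and only if $v(s)=c\,(1+\lambda s^{p/(p-1)})^{-(M-p)/p}$ for some $c\in\mathbb{R}\setminus\{0\}$ and $\lambda>0$, where $C_p(M)$ is exactly the constant in the statement. Radial rearrangement reduces the $p$-Sobolev inequality to this one-dimensional variational problem, whose Euler--Lagrange equation is an ODE that can be integrated explicitly; Talenti's computation of the best constant and of the extremal profile goes through verbatim for non-integer $M$.

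Chaining these steps and re-expressing the lower bound back in terms of $u$ yields \eqref{Ppbcm} with
$$S^{rad}_p(\alpha,\beta)=\omega_{N-1}^{\,1-\frac{p}{p^*_{\alpha,\beta}}}\;m^{\,1-p-\frac{p}{p^*_{\alpha,\beta}}}\;C_p(M).$$
Since $\tfrac{p}{p^*_{\alpha,\beta}}=\tfrac{N-p+\alpha}{N+\beta}$, one has $1-\tfrac{p}{p^*_{\alpha,\beta}}=\tfrac{p+\beta-\alpha}{N+\beta}$ and $1-p-\tfrac{p}{p^*_{\alpha,\beta}}=-\tfrac{p(N-1)+(p-1)\beta+\alpha}{N+\beta}$; inserting $m=p/(p+\beta-\alpha)$ and $M=p(N+\beta)/(p+\beta-\alpha)$ reproduces \eqref{sbc} term by term. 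Tracking equality through $r=s^m$, and using $s^{p/(p-1)}=r^{(p+\beta-\alpha)/(p-1)}$ together with $(M-p)/p=(N-p+\alpha)/(p+\beta-\alpha)$, the one-dimensional extremals become exactly the functions $W_{\lambda,\alpha,\beta}$ of \eqref{pbcm}, after renaming the two free parameters.

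The only genuine work is the one-dimensional step: one needs the sharp inequality, together with the classification of its extremals, for a \emph{non-integer} weight exponent $M$, so either it is cited in exactly this generality or Talenti's ODE integration is re-run; everything else is bookkeeping, modulo the routine but necessary verification that the change of variables respects the relevant function spaces.
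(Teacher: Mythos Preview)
Your proposal is correct and follows essentially the same route as the paper: both reduce the weighted radial inequality to the one-dimensional Bliss--Talenti inequality via the power change of variable $r=s^{m}$ with $m=p/(p+\beta-\alpha)$ (the paper writes $t$ for your $m$ and $K$ for your $M$), then read off the constant and extremals. Your write-up is in fact slightly more careful than the paper's in flagging that the one-dimensional sharp inequality must be invoked for non-integer $M$ and that the change of variables respects the function spaces; the paper simply asserts that ``even $K$ is not an integer we readily see that the above inequality remains true.''
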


    It is well known that the extremal functions of $S^{rad}_p(\alpha,\beta)$ in (\ref{Ppbcm}) are the ground state solutions of the weighted $p$-Laplace equation (\ref{Ppwh}). Thus, as the direct consequence of Theorem \ref{thmPbcm}, we obtain
    \begin{corollary}\label{thmPpwh}
    Assume $1<p<N$ and $p-N<\alpha<p+\beta$. Then problem (\ref{Ppwh}) in the space $\mathcal{D}^{1,p}_\alpha(\mathbb{R}^N)$
    has a radial solution of the form
    \begin{equation}\label{defeu}
    U_{\lambda,\alpha,\beta}(x)=\frac{C_{N,\alpha,\beta}\lambda^{\frac{N-p+\alpha}{p}}}
    {(1+\lambda^{\frac{p+\beta-\alpha}{p-1}}|x|^{\frac{p+\beta-\alpha}{p-1}})^{\frac{N-p+\alpha}{p+\beta-\alpha}}},
    \end{equation}
    with $\lambda>0$, where \[C_{N,\alpha,\beta}=\left[(N+\beta)\left(\frac{N-p+\alpha}{p-1}\right)^{p-1}\right]^{\frac{N-p+\alpha}{p(p+\beta-\alpha)}}.\]
    \end{corollary}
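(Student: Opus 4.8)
The plan is to deduce Corollary~\ref{thmPpwh} from Theorem~\ref{thmPbcm} and the homogeneity of equation~(\ref{Ppwh}), and then to identify the constant $C_{N,\alpha,\beta}$ by one explicit substitution into the radial ODE.

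First, for $A>0$ the function $W_{1,\alpha,\beta}$ is a positive radial minimizer of the quotient
\[
Q(u):=\frac{\int_{\mathbb{R}^N}|x|^{\alpha}|\nabla u|^{p}\,dx}{\bigl(\int_{\mathbb{R}^N}|x|^{\beta}|u|^{p^*_{\alpha,\beta}}\,dx\bigr)^{p/p^*_{\alpha,\beta}}}
\]
on the radial subspace of $\mathcal{D}^{1,p}_\alpha(\mathbb{R}^N)$. Since both integrals defining $Q$ are $O(N)$-invariant, the principle of symmetric criticality makes $W_{1,\alpha,\beta}$ a critical point of $Q$ on all of $\mathcal{D}^{1,p}_\alpha(\mathbb{R}^N)$, hence, by the Lagrange multiplier rule (both functionals being $C^1$ on $\mathcal{D}^{1,p}_\alpha(\mathbb{R}^N)$), a weak solution of
\[
-{\rm div}\bigl(|x|^{\alpha}|\nabla u|^{p-2}\nabla u\bigr)=\mu\,|x|^{\beta}u^{p^*_{\alpha,\beta}-1}
\]
with $\mu>0$ (test with $u$ and use~(\ref{Ppbcm})). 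The left-hand side is $(p-1)$-homogeneous and the right-hand side $(p^*_{\alpha,\beta}-1)$-homogeneous in $u$, so $t_0W_{1,\alpha,\beta}$ with $t_0:=\mu^{1/(p^*_{\alpha,\beta}-p)}$ --- well defined and positive since $p^*_{\alpha,\beta}>p$ under~(\ref{defpp}) --- is a positive radial solution of~(\ref{Ppwh}); comparing with~(\ref{pbcm}), it is of the form~(\ref{defeu}) with $\lambda=1$ and the constant $C_{N,\alpha,\beta}:=t_0A>0$.

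To obtain the whole family I would note that~(\ref{Ppwh}) is invariant under $u\mapsto\lambda^{(N-p+\alpha)/p}u(\lambda\,\cdot)$: a one-line check using $|x|^{\alpha}=\lambda^{-\alpha}|\lambda x|^{\alpha}$, $|x|^{\beta}=\lambda^{-\beta}|\lambda x|^{\beta}$ and the chain rule shows that the exponent $(N-p+\alpha)/p$ is exactly the one balancing these scalings, given $p^*_{\alpha,\beta}=\frac{p(N+\beta)}{N-p+\alpha}$. Since $U_{\lambda,\alpha,\beta}=\lambda^{(N-p+\alpha)/p}U_{1,\alpha,\beta}(\lambda\,\cdot)$, every $U_{\lambda,\alpha,\beta}$ solves~(\ref{Ppwh}). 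Moreover $U_{\lambda,\alpha,\beta}\in\mathcal{D}^{1,p}_\alpha(\mathbb{R}^N)$: with $\sigma:=\frac{p+\beta-\alpha}{p-1}$ one has $|\nabla U_{1,\alpha,\beta}(x)|\asymp|x|^{\sigma-1}$ as $x\to0$ and $|\nabla U_{1,\alpha,\beta}(x)|\asymp|x|^{-1-(N-p+\alpha)/(p-1)}$ as $|x|\to\infty$, and a short computation shows $\int_{\mathbb{R}^N}|x|^{\alpha}|\nabla U_{1,\alpha,\beta}|^{p}\,dx<\infty$ is equivalent to $p-N<\alpha<p+\beta$ --- the same integrability check that makes $Q$ finite in Theorem~\ref{thmPbcm}.

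It remains to pin down $C_{N,\alpha,\beta}$, and here I would bypass tracking $\mu$ and instead substitute $U_{1,\alpha,\beta}(r)=C(1+r^{\sigma})^{-\tau}$, with $\tau:=\frac{N-p+\alpha}{p+\beta-\alpha}$, into the radial form of~(\ref{Ppwh}),
\[
-\bigl(r^{N-1+\alpha}|u'|^{p-2}u'\bigr)'=r^{N-1+\beta}u^{p^*_{\alpha,\beta}-1},\qquad r>0.
\]
Using $\sigma(p-1)=p+\beta-\alpha$ one gets $r^{N-1+\alpha}|U_{1,\alpha,\beta}'|^{p-2}U_{1,\alpha,\beta}'=-(C\tau\sigma)^{p-1}r^{N+\beta}(1+r^{\sigma})^{-(\tau+1)(p-1)}$; differentiating, the two Leibniz-rule terms collapse into a single power of $1+r^{\sigma}$ precisely because $\tau\sigma(p-1)=N-p+\alpha$, so that
\[
-\bigl(r^{N-1+\alpha}|U_{1,\alpha,\beta}'|^{p-2}U_{1,\alpha,\beta}'\bigr)'=(C\tau\sigma)^{p-1}(N+\beta)\,r^{N+\beta-1}(1+r^{\sigma})^{-(\tau+1)(p-1)-1}.
\]
The right-hand side of the equation equals $C^{p^*_{\alpha,\beta}-1}r^{N+\beta-1}(1+r^{\sigma})^{-\tau(p^*_{\alpha,\beta}-1)}$, and the powers of $1+r^{\sigma}$ agree because $(\tau+1)(p-1)+1=\tau(p^*_{\alpha,\beta}-1)$ (equivalently $p^*_{\alpha,\beta}=p(1+\tau^{-1})$), while the constants agree iff $C^{p^*_{\alpha,\beta}-p}=(\tau\sigma)^{p-1}(N+\beta)=\bigl(\tfrac{N-p+\alpha}{p-1}\bigr)^{p-1}(N+\beta)$. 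Since $p^*_{\alpha,\beta}-p=\tfrac{p(p+\beta-\alpha)}{N-p+\alpha}$, taking the $(p^*_{\alpha,\beta}-p)$-th root gives exactly $C_{N,\alpha,\beta}=\bigl[(N+\beta)\bigl(\tfrac{N-p+\alpha}{p-1}\bigr)^{p-1}\bigr]^{(N-p+\alpha)/(p(p+\beta-\alpha))}$. The one step that requires genuine care is this telescoping of the Leibniz-rule terms into a single power of $1+r^{\sigma}$ --- that is where the relations among $\tau$, $\sigma$, $p$ and the exponents really enter; everything else is formal.
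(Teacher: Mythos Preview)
Your proof is correct and complete for the corollary as stated, but it proceeds along a different line from the paper.

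The paper does not pass through the variational structure at all. Instead, it reuses the same change of variable $r=s^{t}$, $t=p/(p+\beta-\alpha)$, that underlies Theorem~\ref{thmPbcm}: plugging a radial $u$ into (\ref{Ppwh}) and setting $v(s)=u(r)$ converts (\ref{Ppwh}) into the unweighted critical $p$-Laplace equation in ``dimension'' $K=\frac{p(N+\beta)}{p+\beta-\alpha}$, and then the paper invokes the known radial classification (essentially \cite{Sc16}) to read off $v$, hence $u$, in closed form. The constant $C_{N,\alpha,\beta}$ drops out of that transformation automatically.

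What each approach buys: your argument is self-contained --- the direct ODE substitution you carry out (the telescoping via $\tau\sigma(p-1)=N-p+\alpha$ and $(\tau+1)(p-1)+1=\tau(p^*_{\alpha,\beta}-1)$) verifies everything without appealing to a classification result, and in particular sidesteps the mild awkwardness that $K$ need not be an integer. The paper's route, on the other hand, delivers more than the corollary asserts: it shows that \emph{every} positive radial solution in $\mathcal{D}^{1,p}_{\alpha}(\mathbb{R}^N)$ is of the form (\ref{defeu}), i.e.\ uniqueness up to scaling, whereas your variational argument only produces one. Since the statement only claims existence, this does not affect correctness. One small remark: the appeal to symmetric criticality is fine but not really needed --- a radial critical point of the radial quotient already solves the radial ODE, which for radial functions is the PDE.
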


    Our second result concerns the study of the linearized problem related to (\ref{Ppwh}) at the function $U_{1,\alpha,\beta}$. This leads to study the problem:
    \begin{align}\label{Ppwhl}
    & -{\rm div}(|x|^{\alpha}|\nabla U_{1,\alpha,\beta}|^{p-2}\nabla v)-(p-2){\rm div}(|x|^{\alpha}|\nabla U_{1,\alpha,\beta}|^{p-4}(\nabla U_{1,\alpha,\beta}\cdot\nabla v)\nabla U_{1,\alpha,\beta}) \nonumber\\
    & =(p^*_{\alpha,\beta}-1)|x|^{\beta} U_{1,\alpha,\beta}^{p^*_{\alpha,\beta}-2}v \quad \mbox{in}\quad \mathbb{R}^N,\quad v\in L^2_{\beta,*}(\mathbb{R}^N).
    \end{align}
    Here, the weighted Sobolev space $L^2_{\beta,*}(\mathbb{R}^N)$ is defined as the completion of $C^\infty_c(\mathbb{R}^N)$ with respect to  the norm
    \begin{equation}\label{defd12*n}
    \|\varphi\|_{L^{2}_{\beta,*}(\mathbb{R}^N)}:=\left(\int_{\mathbb{R}^N}
    |x|^\beta U_{1,\alpha,\beta}^{p^*_{\alpha,\beta}-2}\varphi^2dx\right)^{\frac{1}{2}}.
    \end{equation}
    For the classification of solutions for above equation, see \cite{AGP99,Re90}  with $\alpha=\beta=0$ and $p=2$, \cite{GGN13} with $\alpha=0$ and $p=2$, \cite{BCG21,DGG17} with $p=2$, and \cite{FN19,FZ22,PV21} with $\alpha=\beta=0$.  Next theorem characterizes all the solutions to (\ref{Ppwhl}).

    \begin{theorem}\label{coroPpwhlp}
    Assume $1<p<N$ and $p-N<\alpha<p+\beta$.
    If
    \begin{equation}\label{npk}
    \left(\frac{p+\beta-\alpha}{p}\right)^2\left[\frac{p(N+\beta)}{p+\beta-\alpha}-1\right]=k(N-2+k), \quad \mbox{for some}\quad k\in\mathbb{N}^+,
    \end{equation}
    then the space of solutions of (\ref{Ppwhl}) has dimension $1+\frac{(N+2k-2)(N+k-3)!}{(N-2)!k!}$ and is spanned by
    \begin{equation}\label{defwp0i}
    W_0(x)=\frac{(p-1)-|x|^{\frac{p+\beta-\alpha}{p-1}}}{(1+|x|^{\frac{p+\beta-\alpha}{p-1}})^\frac{N+\beta}{p+\beta-\alpha}},\quad W_{k,i}(x)=\frac{|x|^{\frac{p+\beta-\alpha}{p(p-1)}}\Psi_{k,i}(x)}{(1+|x|^{\frac{p+\beta-\alpha}{p-1}})^\frac{N+\beta}{p+\beta-\alpha}},
    \end{equation}
    where $\{\Psi_{k,i}\}$, $i=1,\ldots,\frac{(N+2k-2)(N+k-3)!}{(N-2)!k!}$, form a basis of $\mathbb{Y}_k(\mathbb{R}^N)$, the space of all homogeneous harmonic polynomials of degree $k$ in $\mathbb{R}^N$.
    Otherwise the space of solutions of (\ref{Ppwhl}) has dimension one and is spanned by $W_0(x)$ and in this case we say the solution $U_{1,\alpha,\beta}$ of equation (\ref{Ppwh}) is non-degenerate.
    \end{theorem}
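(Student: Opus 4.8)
\medskip
\noindent\emph{Sketch of proof.} The plan is to transform \eqref{Ppwhl} into the radial part, in a suitable effective dimension $\widetilde{N}$, of the linearized $p$-Laplace equation \eqref{Ppwhlp} for the Aubin--Talenti profile, and then to analyze a one-parameter family of radial ODEs. First I would set
\[
\delta:=\frac{p+\beta-\alpha}{p}>0,\qquad \widetilde{N}:=\frac{p(N+\beta)}{p+\beta-\alpha}>p,
\]
so that $p^*_{\alpha,\beta}=\tfrac{p\widetilde{N}}{\widetilde{N}-p}$, and perform the change of variable $s=|x|^{\delta}$, which is precisely the substitution reducing the radial Caffarelli--Kohn--Nirenberg inequality of Theorem \ref{thmPbcm} to the Sobolev inequality \eqref{bcesi} in dimension $\widetilde{N}$: under it $U_{1,\alpha,\beta}(x)$ becomes $c_0 V_1(s)$ with $V_1(s)=\gamma_{\widetilde{N},p}(1+s^{p/(p-1)})^{-(\widetilde{N}-p)/p}$ the radial extremal of \eqref{bcesi} in dimension $\widetilde{N}$ and $c_0=\delta^{(\widetilde{N}-p)/p}=C_{N,\alpha,\beta}/\gamma_{\widetilde{N},p}$. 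Since $U_{1,\alpha,\beta}$ is radial, $\nabla U_{1,\alpha,\beta}$ is radial, so the operator on the left of \eqref{Ppwhl} acts with coefficient $(p-1)|\partial_r U_{1,\alpha,\beta}|^{p-2}$ on radial components and $|\partial_r U_{1,\alpha,\beta}|^{p-2}$ on tangential ones, and therefore preserves every spherical-harmonic mode. Decomposing $v=\sum_{k\ge0}\sum_{i=1}^{d_k}v_{k,i}(|x|)Y_{k,i}(x/|x|)$ with $\{Y_{k,i}\}_i$ an orthonormal basis of $\mathbb{Y}_k(\mathbb{R}^N)$, $d_k:=\tfrac{(N+2k-2)(N+k-3)!}{(N-2)!\,k!}$ and $-\Delta_{\mathbb{S}^{N-1}}Y_{k,i}=k(N-2+k)Y_{k,i}$, equation \eqref{Ppwhl} becomes the countable family of ODEs
\[
-\frac{p-1}{r^{N-1}}\bigl(r^{N-1+\alpha}|U_{1,\alpha,\beta}'|^{p-2}v_{k,i}'\bigr)'+k(N-2+k)r^{\alpha-2}|U_{1,\alpha,\beta}'|^{p-2}v_{k,i}=(p^*_{\alpha,\beta}-1)r^{\beta}U_{1,\alpha,\beta}^{p^*_{\alpha,\beta}-2}v_{k,i},
\]
and, setting $\phi_{k,i}(s):=v_{k,i}(s^{1/\delta})$ and using $s=r^{\delta}$ (absorbing the constant $c_0$), this is equivalent to
\[
-\frac{p-1}{s^{\widetilde{N}-1}}\bigl(s^{\widetilde{N}-1}|V_1'|^{p-2}\phi_{k,i}'\bigr)'+\Lambda_k\frac{|V_1'|^{p-2}}{s^{2}}\phi_{k,i}=(p^*_{\alpha,\beta}-1)V_1^{p^*_{\alpha,\beta}-2}\phi_{k,i},\qquad \Lambda_k:=\frac{k(N-2+k)}{\delta^{2}},
\]
i.e.\ the radial part in dimension $\widetilde{N}$ of \eqref{Ppwhlp} with the angular eigenvalue replaced by the parameter $\Lambda_k$; moreover $\|v\|_{L^2_{\beta,*}(\mathbb{R}^N)}^2$ is, up to a fixed positive factor, $\sum_{k,i}\int_0^\infty s^{\widetilde{N}-1}V_1^{p^*_{\alpha,\beta}-2}\phi_{k,i}^2\,ds$, so $v\in L^2_{\beta,*}(\mathbb{R}^N)$ means, mode by mode, that $\phi_{k,i}$ lies in the weighted $L^2$-space of the dimension-$\widetilde{N}$ problem.

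\medskip
\noindent The core of the argument is to determine, for real $\Lambda\ge0$, when the above radial ODE has a nontrivial solution in that weighted $L^2$-space. For this I would study the quadratic form
\[
Q_\Lambda(\phi):=(p-1)\!\int_0^\infty\! s^{\widetilde{N}-1}|V_1'|^{p-2}|\phi'|^2ds+\Lambda\!\int_0^\infty\! s^{\widetilde{N}-3}|V_1'|^{p-2}\phi^2ds-(p^*_{\alpha,\beta}-1)\!\int_0^\infty\! s^{\widetilde{N}-1}V_1^{p^*_{\alpha,\beta}-2}\phi^2ds
\]
on the natural Hilbert space, using that its embedding into the weighted $L^2$-space is compact --- the radial instance of the compactness theorem of \cite{FZ22} (valid for all $1<p<N$), a weighted Hardy inequality controlling the middle integral. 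Then $Q_\Lambda$ has a nondecreasing sequence of eigenvalues $\lambda_1(\Lambda)\le\lambda_2(\Lambda)\le\cdots\to+\infty$, each continuous and, because $Q_\Lambda-Q_{\Lambda'}=(\Lambda-\Lambda')\int_0^\infty s^{\widetilde{N}-3}|V_1'|^{p-2}\phi^2ds>0$ for $\Lambda>\Lambda'$ and $\phi\not\equiv0$, strictly increasing in $\Lambda$. At $\Lambda=0$ the form $Q_0$ is the radial part of the linearized operator at $V_1$ for \eqref{bce} in dimension $\widetilde{N}$, which --- by the Morse-index-one and non-degeneracy properties of the $p$-Laplace bubble, classical for $p=2$ and established in \cite{FZ22}, resp.\ \cite{FN19}, for general $1<p<N$ --- has exactly one negative eigenvalue and a one-dimensional kernel, spanned by $Z_0^{\mathrm{rad}}(s):=\tfrac{\widetilde{N}-p}{p}V_1(s)+sV_1'(s)$, the radial analogue of $Z_0$ in \eqref{lpsy}. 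Hence $\lambda_1(0)<0=\lambda_2(0)<\lambda_3(0)\le\cdots$; so $\lambda_m(\Lambda)>0$ for all $m\ge2$ and $\Lambda>0$, while $\lambda_1$ increases strictly from a negative value to $+\infty$ and thus vanishes at a unique $\Lambda^*>0$. Consequently the radial ODE has a nontrivial admissible solution precisely when $\Lambda\in\{0,\Lambda^*\}$, with a one-dimensional solution space in each case. Since the radial profile $V_1'$ of the translation mode $\partial_{x_i}V_1$ (an analogue of $Z_i$ in \eqref{lpsy} in dimension $\widetilde{N}$) is admissible and solves the equation with $\Lambda=1\cdot(1+\widetilde{N}-2)=\widetilde{N}-1>p-1>0$, we conclude $\Lambda^*=\widetilde{N}-1$.

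\medskip
\noindent It follows that a solution $v\in L^2_{\beta,*}(\mathbb{R}^N)$ of \eqref{Ppwhl} has a nonzero component only in modes $k$ with $\Lambda_k\in\{0,\widetilde{N}-1\}$. The mode $k=0$ (where $\Lambda_0=0$) always contributes the one-dimensional space generated by the pull-back of $Z_0^{\mathrm{rad}}$, which after inserting $V_1$ and $s=|x|^\delta$ is exactly $W_0$ in \eqref{defwp0i}. A mode $k\ge1$ contributes if and only if $\Lambda_k=\widetilde{N}-1$, i.e.\ $\bigl(\tfrac{p+\beta-\alpha}{p}\bigr)^2\bigl[\tfrac{p(N+\beta)}{p+\beta-\alpha}-1\bigr]=k(N-2+k)$, which is precisely \eqref{npk}; since $k\mapsto k(N-2+k)$ is strictly increasing on $\mathbb{N}^+$, at most one such $k$ exists, and for it the mode contributes the $d_k$-dimensional space generated by the pull-backs of $V_1'(|x|^\delta)Y_{k,i}(x/|x|)$, namely the $W_{k,i}$ of \eqref{defwp0i}. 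Hence the solution space is $\mathrm{span}\{W_0\}$, of dimension one (the non-degenerate case), when \eqref{npk} fails, and $\mathrm{span}\{W_0\}\oplus\mathrm{span}\{W_{k,1},\dots,W_{k,d_k}\}$, of dimension $1+d_k$, when \eqref{npk} holds.

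\medskip
\noindent The step I expect to be the main obstacle is the spectral analysis at $\Lambda=0$: transporting the Morse-index-one / non-degeneracy statement for the $p$-Laplace bubble from \cite{FN19,FZ22} to the radial, possibly non-integer, dimension $\widetilde{N}$. Once that is in hand, the monotonicity of the eigenvalues $\lambda_m(\Lambda)$ in $\Lambda$ completes the argument. A more computational route to the same conclusion is to perform one further change of variable (e.g.\ $t=s^{p/(p-1)}/(1+s^{p/(p-1)})$), reducing the radial ODE to a hypergeometric equation whose admissible parameters $\Lambda$ can be read off directly.
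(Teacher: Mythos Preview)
Your approach is essentially the same as the paper's: the same change of variable $s=|x|^{\delta}$ with $\delta=(p+\beta-\alpha)/p$ and effective dimension $\widetilde{N}=K=\frac{p(N+\beta)}{p+\beta-\alpha}$, the same spherical-harmonic decomposition, and the same reduction to the radial linearized $p$-Laplace ODE in ``dimension'' $K$. The only difference is in the endgame: the paper simply quotes \cite[Proposition~3.1]{FN19} to get the explicit eigenfunctions $\eta_0,\eta_1$ with eigenvalues $\mu_0=0$ and $\mu_1=K-1$ (remarking that the formulas persist for non-integer $K$), whereas you recover the same dichotomy $\Lambda\in\{0,\widetilde{N}-1\}$ via a monotonicity-in-$\Lambda$ argument on the quadratic form together with the Morse-index-one property at $\Lambda=0$.
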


    \begin{remark}\label{remmr}\rm
    The key step of the proofs of the above three conclusions is the change of variables $r\mapsto r^{\frac{p}{p+\beta-\alpha}}$, which are mainly inspired by \cite{CG10}, see also \cite[Theorem A.1]{GGN13}. Recently, after completing this work, we find a paper of Su and Wang \cite{SW22} which gives analogous results as Theorem \ref{thmPbcm} by using same methods. For the completeness of this paper, we keep this result and our main conclusion is Theorem \ref{coroPpwhlp}.

    Note that in the case $\alpha=\beta=0$ we get $k=1$ and one gets back the well known result for the equation involving the critical Sobolev exponent, see \cite{PV21}. Furthermore, for all $\alpha\neq 0$ or $\beta\neq 0$ the solutions of problem (\ref{Ppwh}) are invariant for dilations but not for translations, and Theorem \ref{coroPpwhlp} highlights the new phenomenon that if (\ref{npk}) holds then there exist new solutions to (\ref{Ppwhl}) that ``replace'' the ones due to the translations invariance. Indeed, nonradial solutions might appear. When $p=2$, $\alpha=0$, $\beta=2$ and $N\geq 4$ is even, Gladiali, Grossi and Neves \cite{GGN13} constructed the nonradial solutions to equation (\ref{Ppwh}), that is, for any $a\in\mathbb{R}$, the functions
    \begin{equation*}
    u(x)=u(|x'|,|x''|)=C_{N,0,2}(1+|x|^4-2a(|x'|^2-|x''|^2)+a^2)^{-\frac{N-2}{4}},
    \end{equation*}
    form a branch of solutions to (\ref{Ppwh}) bifurcating from $U_{1,0,2}$, where $(x',x'')\in \mathbb{R}^N=\mathbb{R}^{\frac{N}{2}}\times\mathbb{R}^{\frac{N}{2}}$.
    \end{remark}

    As an application of Theorem \ref{coroPpwhlp}, enlightened by  \cite{BE91,BrL85}, we are concerned the remainder terms of inequality (\ref{Ppbcm}) in radial spaces
    \begin{equation}\label{defd12xr}
    \mathcal{D}^{1,p}_{\alpha,r}(\mathbb{R}^N)=\{u\in \mathcal{D}^{1,p}_{\alpha}(\mathbb{R}^N): u(x)=u(|x|)\}.
    \end{equation}
    For $p=2$, Wang and Willem \cite{WaWi03} obtained the remainder terms of (CKN) inequality, Wei and Wu \cite{WW22} established the stability of the profile decompositions to the (CKN) inequality and also gave the gradient type reminder terms. For $\alpha=\beta=0$, Cianchi et al. \cite{CFMP09} proved a stability version for every $1<p<N$, Figalli and Neumayer \cite{FN19} proved the gradient stability for the Sobolev inequality when $p\geq 2$, Neumayer \cite{Ne20} extended the result in \cite{FN19} to all $1<p<N$. It is worth to mention that very recently, Figalli and Zhang \cite{FZ22} obtained the sharp stability of critical points of the Sobolev inequality (\ref{bcesi}) for all $1<p<N$ which reads
    \[
    \frac{\|\nabla u\|_{L^p(\mathbb{R}^N)}}{\|u\|_{L^{p^*}(\mathbb{R}^N)}}-S^{\frac{1}{p}}
    \geq c_{N,p} \inf_{v\in \mathcal{M}_0}\left(\frac{\|\nabla u-\nabla v\|_{L^p(\mathbb{R}^N)}}{\|\nabla u\|_{L^p(\mathbb{R}^N)}}\right)^{\max\{2,p\}},\quad \forall v\in \mathcal{D}^{1,p}_0(\mathbb{R}^N),
    \]
    for some constant $c_{N,p}>0$, where $\mathcal{M}_0$ is the set of minimizers for Sobolev inequality (\ref{bcesi}), furthermore the exponent $\max\{2,p\}$ is sharp, and this can be understood as a weak form as Bianchi-Egnell type (see \cite{BE91})
    \[
    \int_{\mathbb{R}^N}|\nabla u|^p dx- S\left(\int_{\mathbb{R}^N}|u|^{p^*} dx\right)^{\frac{p}{p^*}}\geq c_{N,p} \inf_{v\in \mathcal{M}_0}\|\nabla u-\nabla v\|_{L^p(\mathbb{R}^N)}^{\max\{2,p\}}.
    \]

    As mentioned above, we will extend the work of Figalli and Zhang \cite{FZ22} at least in radial space.
    \begin{theorem}\label{thmprtp}
    Assume $2\leq p<N$ and $p-N<\alpha<p+\beta$. Then there exists constant $B_1=B_1(N,p,\alpha,\beta)>0$ such that for every $u\in \mathcal{D}^{1,p}_{\alpha,r}(\mathbb{R}^N)$, it holds that
    \[
    \int_{\mathbb{R}^N}|x|^{\alpha}|\nabla u|^p dx- S^{rad}_p(\alpha,\beta)\left(\int_{\mathbb{R}^N}|x|^{\beta}|u|^{p^*_{\alpha,\beta}} dx\right)^{\frac{p}{p^*_{\alpha,\beta}}}\geq B_1 {\rm dist}(u,\mathcal{M})^p.
    \]
    where $\mathcal{M}=\{cU_{\lambda,\alpha,\beta}: c\in\mathbb{R}, \lambda>0\}$ is a two-dimensional manifold (see $U_{\lambda,\alpha,\beta}$ as in (\ref{defeu})), and ${\rm dist}(u,\mathcal{M}):=\inf_{c\in\mathbb{R},\lambda>0}\|u-cU_{\lambda,\alpha,\beta}\|_{\mathcal{D}^{1,p}_{\alpha}(\mathbb{R}^N)}$.
    \end{theorem}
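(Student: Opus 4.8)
The plan is to run the Bianchi--Egnell scheme \cite{BE91} in the quantitative form of Figalli--Neumayer \cite{FN19} and Figalli--Zhang \cite{FZ22}. Set
\[
\mathcal{E}(u):=\int_{\mathbb{R}^N}|x|^{\alpha}|\nabla u|^p\,dx,\qquad
\mathcal{F}(u):=\left(\int_{\mathbb{R}^N}|x|^{\beta}|u|^{p^*_{\alpha,\beta}}\,dx\right)^{\frac{p}{p^*_{\alpha,\beta}}},
\]
and $\delta(u):=\mathcal{E}(u)-S^{rad}_p(\alpha,\beta)\,\mathcal{F}(u)$, so that $\delta(u)\ge0$ by Theorem~\ref{thmPbcm} and we must prove $\delta(u)\ge B_1\,\mathrm{dist}(u,\mathcal{M})^p$ on $\mathcal{D}^{1,p}_{\alpha,r}(\mathbb{R}^N)$. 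Since $\mathcal{E}$, $\mathcal{F}$ and $\mathrm{dist}(\cdot,\mathcal{M})$ are $p$-homogeneous under $u\mapsto cu$ and invariant under the dilations $u\mapsto\lambda^{(N-p+\alpha)/p}u(\lambda\,\cdot)$, it will suffice to treat $u$ near $U_{1,\alpha,\beta}$. (Alternatively, one may first apply the substitution $r\mapsto r^{p/(p+\beta-\alpha)}$ of Remark~\ref{remmr}, which recasts the whole problem as a one-dimensional Sobolev problem in the effective dimension $n:=\frac{p(N+\beta)}{p+\beta-\alpha}$; this is convenient but not essential.) The argument has three steps: a spectral gap for the linearization, where Theorem~\ref{coroPpwhlp} enters; a local estimate near $\mathcal{M}$; and a concentration--compactness argument promoting the local statement to a global one.

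\emph{Step 1 (spectral gap).} Since $U_{1,\alpha,\beta}$ is both a solution of \eqref{Ppwh} and an extremal of \eqref{Ppbcm}, testing \eqref{Ppwh} against $U_{1,\alpha,\beta}$ yields $S^{rad}_p(\alpha,\beta)=\left(\int_{\mathbb{R}^N}|x|^{\beta}U_{1,\alpha,\beta}^{p^*_{\alpha,\beta}}\,dx\right)^{1-p/p^*_{\alpha,\beta}}$, whence $\delta'(cU_{1,\alpha,\beta})=0$ for all $c\ne0$. Thus the Taylor expansion of $\delta$ at $cU_{1,\alpha,\beta}$ starts with the quadratic form whose associated operator is precisely the left-hand side of \eqref{Ppwhl} minus $(p^*_{\alpha,\beta}-1)|x|^{\beta}U_{1,\alpha,\beta}^{p^*_{\alpha,\beta}-2}$; by $p$-homogeneity this quadratic form equals $c^{p-2}Q_1$ for $c>0$, where $Q_1$ is the one at $U_{1,\alpha,\beta}$. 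On the radial class the associated eigenvalue problem is a weighted, essentially one-dimensional, Sturm--Liouville problem with discrete spectrum: $U_{1,\alpha,\beta}$ itself is an eigenfunction with eigenvalue $p-1$, and being positive it is the first one; $W_0$, which changes sign exactly once, is an eigenfunction with eigenvalue $p^*_{\alpha,\beta}-1>p-1$, hence the second. By Theorem~\ref{coroPpwhlp} the kernel of the operator in $L^{2}_{\beta,*}(\mathbb{R}^N)$ is spanned by $W_0$ and, only when \eqref{npk} holds, by the functions $W_{k,i}$; but each $W_{k,i}$ carries a homogeneous harmonic polynomial of degree $k\ge1$ and is therefore not radial. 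Consequently, on $\mathcal{D}^{1,p}_{\alpha,r}(\mathbb{R}^N)$ the kernel of $Q_1$ is exactly $T_{U_{1,\alpha,\beta}}\mathcal{M}=\mathrm{span}\{U_{1,\alpha,\beta},W_0\}$, so there is $\mu>0$ with
\[
Q_1(\phi)\ \ge\ \mu\int_{\mathbb{R}^N}|x|^{\alpha}|\nabla U_{1,\alpha,\beta}|^{p-2}|\nabla\phi|^2\,dx
\]
for every radial $\phi$ orthogonal to $T_{U_{1,\alpha,\beta}}\mathcal{M}$ in the inner product $\langle\varphi,\psi\rangle:=\int_{\mathbb{R}^N}|x|^{\beta}U_{1,\alpha,\beta}^{p^*_{\alpha,\beta}-2}\varphi\psi\,dx$; here $p\ge2$ is used to drop the nonnegative term $(p-2)\int|x|^{\alpha}|\nabla U_{1,\alpha,\beta}|^{p-4}(\nabla U_{1,\alpha,\beta}\cdot\nabla\phi)^2$ from the quadratic form.

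\emph{Step 2 (local estimate) and Step 3 (globalization).} For $u\in\mathcal{D}^{1,p}_{\alpha,r}(\mathbb{R}^N)$ near $\mathcal{M}$, after a dilation I would write $u=cU_{1,\alpha,\beta}+\phi$ with $c$ close to $1$ and $\phi$ small in $\mathcal{D}^{1,p}_{\alpha}$, chosen (by the implicit function theorem) orthogonal to $T_{U_{1,\alpha,\beta}}\mathcal{M}$ in the inner product above, so that $\|\phi\|_{\mathcal{D}^{1,p}_{\alpha}}\ge\mathrm{dist}(u,\mathcal{M})$. Then split $\mathbb{R}^N$ into $\{|\phi|\le\varepsilon_0 U_{1,\alpha,\beta}\}$ and its complement. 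On the first region one Taylor expands $\mathcal{E}$ and $\mathcal{F}$ and uses $\delta'(cU_{1,\alpha,\beta})=0$, the pointwise convexity inequality $|X+Y|^p\ge|X|^p+p|X|^{p-2}\langle X,Y\rangle+c_p|Y|^p$ (valid for $p\ge2$, $X,Y\in\mathbb{R}^N$), the coercivity of Step~1, and the fact that there the cubic and higher remainders of $\mathcal{F}$ are absorbed into $Q_1(\phi)$; on the complement the term $|x|^{\alpha}|\nabla\phi|^p$ already dominates the rest since $U_{1,\alpha,\beta}\lesssim|\phi|$ there. Summing, $\delta(u)\ge B_1\int_{\mathbb{R}^N}|x|^{\alpha}|\nabla\phi|^p\,dx\ge B_1\,\mathrm{dist}(u,\mathcal{M})^p$ in a neighbourhood of $\mathcal{M}$. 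To globalize, suppose the inequality fails: there are $u_j\in\mathcal{D}^{1,p}_{\alpha,r}(\mathbb{R}^N)$ with $\delta(u_j)\le j^{-1}\mathrm{dist}(u_j,\mathcal{M})^p$, and normalizing $\mathcal{F}(u_j)=\mathcal{F}(U_{1,\alpha,\beta})$ forces $\delta(u_j)\to0$. A concentration--compactness analysis of \eqref{Ppbcm} in the radial class — in which no mass escapes to $0$ or $\infty$, as the strict inequalities $p-N<\alpha<p+\beta$ make the limiting problems there nonoptimal, and the energy level rules out splitting into several bubbles — produces $\lambda_j>0$ and signs such that $\pm\lambda_j^{(N-p+\alpha)/p}u_j(\lambda_j\,\cdot)\to U_{1,\alpha,\beta}$ strongly in $\mathcal{D}^{1,p}_{\alpha}$; by dilation-invariance $\mathrm{dist}(u_j,\mathcal{M})\to0$, so for $j$ large Step~2 applies and contradicts $\delta(u_j)\le j^{-1}\mathrm{dist}(u_j,\mathcal{M})^p$. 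This will prove the theorem.

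The main obstacle will be Step~2, i.e.\ upgrading the $L^2$-type coercivity of the second variation — which is all the linearization naturally provides — to the genuine $p$-th power gradient distance. The region decomposition is forced because, when $p>3$, the cubic remainder of $\mathcal{F}$ on the whole space is only $O(\|\phi\|_{\mathcal{D}^{1,p}_{\alpha}}^3)$, which is larger than $\|\phi\|_{\mathcal{D}^{1,p}_{\alpha}}^p$; one must isolate the region where $U_{1,\alpha,\beta}$ dominates $\phi$, so that cubic terms become quadratic and hence absorbable, handle the complementary region purely by convexity, and then carefully match the two estimates, exactly as in \cite{FN19,FZ22}. It is precisely here that $p\ge2$ is essential, as it supplies the pointwise convexity inequality and yields the sharp exponent $\max\{2,p\}=p$. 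By contrast, in the radial setting Step~1 is clean thanks to Theorem~\ref{coroPpwhlp}, and the compactness in Step~3 is comparatively soft: the weights break translation invariance, so only the one-parameter dilation symmetry survives in the radial class, which removes the main difficulty of \cite{FZ22}.
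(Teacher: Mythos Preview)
Your three-step Bianchi--Egnell architecture is correct, and Steps~1 and~3 essentially match the paper: Theorem~\ref{coroPpwhlp} produces the spectral gap on the radial orthogonal complement (this is Proposition~\ref{propevl}), and the globalization is by contradiction plus concentration--compactness---the paper in fact uses the substitution $r\mapsto r^{p/(p+\beta-\alpha)}$ to reduce to Lions' principle for the classical Sobolev inequality rather than arguing compactness from scratch.

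Step~2 as you describe it, however, has a genuine gap. You plan to combine the convexity inequality $|X+Y|^p\ge|X|^p+p|X|^{p-2}\langle X,Y\rangle+c_p|Y|^p$ with the coercivity of Step~1 on $\{|\phi|\le\varepsilon_0U\}$, and to claim $|x|^{\alpha}|\nabla\phi|^p$ dominates on the complement. But that convexity inequality carries no quadratic gradient term, so the spectral gap has nothing to act on; and the function-value region $\{|\phi|>\varepsilon_0U\}$ gives no information about $|\nabla\phi|$ versus $|\nabla U|$. More fundamentally, for $p>2$ there is \emph{no} global inequality of the form
\[
|X+Y|^p\ \ge\ |X|^p+p|X|^{p-2}\langle X,Y\rangle+(1-\kappa)\cdot(\text{exact second variation in }Y)+c_\kappa|Y|^p:
\]
testing with $Y=-X$ forces $c_\kappa\le(p-1)\bigl(1-(1-\kappa)\tfrac{p}{2}\bigr)<0$ once $\kappa$ is small. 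So a naive mix of Taylor expansion and convexity cannot yield both the near-sharp quadratic term and the $p$-th power term.

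The paper (following \cite{FZ22}) resolves this with two ingredients and \emph{no} region decomposition. First, Lemma~\ref{lemui1p} for $p\ge2$: for every $\kappa>0$ there is $\mathcal{C}_1>0$ with
\[
|X+Y|^p\ge|X|^p+p|X|^{p-2}X\cdot Y+\tfrac{(1-\kappa)p}{2}\Bigl(|X|^{p-2}|Y|^2+(p-2)|\omega|^{p-2}\bigl(|X|-|X+Y|\bigr)^2\Bigr)+\mathcal{C}_1|Y|^p,
\]
where $\omega=\omega(X,X+Y)$ is an explicit interpolant; this is precisely a single inequality producing a near-sharp quadratic term \emph{and} the $|Y|^p$ term simultaneously (an analogous upper bound for $|a+b|^{p^*_{\alpha,\beta}}$ is Lemma~\ref{lemui1p*l}). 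Second, because the quadratic form so obtained carries the $\omega$-correction rather than the exact Hessian, the pure spectral gap of Step~1 does not apply directly; the paper proves a \emph{refined} spectral gap, Lemma~\ref{lemsgap}, by a compactness argument built on Proposition~\ref{propcet}: for $v$ orthogonal to $T_U\mathcal{M}$ with small $\mathcal{D}^{1,p}_\alpha$-norm the $\omega$-corrected form still dominates $[(p^*_{\alpha,\beta}-1)+\tau]\int|x|^\beta U^{p^*_{\alpha,\beta}-2}v^2$. With Lemmas~\ref{lemui1p}, \ref{lemui1p*l} and \ref{lemsgap} the local estimate (Lemma~\ref{lemma:rtnm2b}) assembles directly, and the surviving term $\mathcal{C}_1d_n^{\,p}$ delivers the exponent~$p$.
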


    \begin{theorem}\label{thmprtp2}
    Assume $1<p< 2\leq N$ and $p-N<\alpha<p+\beta$. Then there exists constant $B_2=B_2(N,p,\alpha,\beta)>0$ such that for every $u\in \mathcal{D}^{1,p}_{\alpha,r}(\mathbb{R}^N)$, it holds that
    \[
    \int_{\mathbb{R}^N}|x|^{\alpha}|\nabla u|^p dx- S^{rad}_p(\alpha,\beta)\left(\int_{\mathbb{R}^N}|x|^{\beta}|u|^{p^*_{\alpha,\beta}} dx\right)^{\frac{p}{p^*_{\alpha,\beta}}}\geq B_2 {\rm dist}(u,\mathcal{M})^2.
    \]
    \end{theorem}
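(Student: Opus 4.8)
The plan is to adapt the proof of Theorem \ref{thmprtp}, i.e.\ to run the compactness-plus-spectral-estimate form of the Bianchi--Egnell scheme of Figalli--Zhang \cite{FZ22} and Neumayer \cite{Ne20} in the radial class; the only step needing genuinely new input is the second-order analysis, because for $1<p<2$ the functional $u\mapsto\int_{\mathbb{R}^N}|x|^{\alpha}|\nabla u|^p\,dx$ is not twice differentiable at a function whose gradient vanishes somewhere, and the quantity that the deficit $\mathcal{E}(u):=\int_{\mathbb{R}^N}|x|^{\alpha}|\nabla u|^p\,dx-S^{rad}_p(\alpha,\beta)(\int_{\mathbb{R}^N}|x|^{\beta}|u|^{p^*_{\alpha,\beta}}\,dx)^{p/p^*_{\alpha,\beta}}$ controls to second order near an extremal is the weighted energy $\int_{\mathbb{R}^N}|x|^{\alpha}|\nabla U_{1,\alpha,\beta}|^{p-2}|\nabla\rho|^2\,dx$ of the perturbation $\rho$, not $\|\rho\|_{\mathcal{D}^{1,p}_\alpha}^p$. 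As in Remark \ref{remmr}, the change of variables $r\mapsto r^{p/(p+\beta-\alpha)}$ converts (\ref{Ppbcm}) on $\mathcal{D}^{1,p}_{\alpha,r}(\mathbb{R}^N)$ into a one-dimensional weighted Sobolev inequality of (in general non-integer) ``dimension'' $K=p(N+\beta)/(p+\beta-\alpha)$; in this picture $\mathcal{M}$ becomes the Talenti family $(1+s^{p/(p-1)})^{-(K-p)/p}$, the embedding relevant for the linearized problem (the weighted gradient space into $L^2_{\beta,*}(\mathbb{R}^N)$) is compact, and the linearization has the explicit spectrum behind Theorem \ref{coroPpwhlp}.

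First I would handle, by a soft concentration-compactness argument in the radial class using that $\mathcal{E}$ vanishes exactly on $\mathcal{M}$ (Theorem \ref{thmPbcm}), the regime $\mathrm{dist}(u,\mathcal{M})\ge\varepsilon_0$ (after the usual scaling normalization, say $\int_{\mathbb{R}^N}|x|^{\beta}|u|^{p^*_{\alpha,\beta}}\,dx$ fixed), where $\mathcal{E}(u)$ is bounded below by a fixed positive multiple of $\mathrm{dist}(u,\mathcal{M})^2$. The main regime $\mathrm{dist}(u,\mathcal{M})\to0$ is treated by contradiction: suppose $u_n\in\mathcal{D}^{1,p}_{\alpha,r}(\mathbb{R}^N)$ satisfy $d_n:=\mathrm{dist}(u_n,\mathcal{M})\to0$ and $\mathcal{E}(u_n)=o(d_n^2)$. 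Choosing $(c_n,\lambda_n)$ realizing $d_n$, write $u_n=c_nU_{\lambda_n,\alpha,\beta}+\rho_n$ with $\|\rho_n\|_{\mathcal{D}^{1,p}_\alpha}=d_n$, and rescale so that $\lambda_n\equiv1$, $c_n\to c_0>0$. Minimality of $(c,\lambda)\mapsto\|u_n-cU_{\lambda,\alpha,\beta}\|_{\mathcal{D}^{1,p}_\alpha}$ gives the (for $p\neq2$, nonlinear) relations
\[
\int_{\mathbb{R}^N}|x|^{\alpha}|\nabla\rho_n|^{p-2}\nabla\rho_n\cdot\nabla U_{1,\alpha,\beta}\,dx=0,\quad \int_{\mathbb{R}^N}|x|^{\alpha}|\nabla\rho_n|^{p-2}\nabla\rho_n\cdot\nabla W_0\,dx=0,
\]
where one checks $W_0\propto\partial_\lambda U_{\lambda,\alpha,\beta}|_{\lambda=1}$, so $\mathrm{span}\{U_{1,\alpha,\beta},W_0\}$ is the tangent space to $\mathcal{M}$ at $U_{1,\alpha,\beta}$; put $w_n:=\rho_n/d_n$.

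Expanding the two terms of $\mathcal{E}(u_n)$ by the two-sided pointwise estimate valid for $1<p<2$,
\[
\frac{1}{C(p)}\,\frac{|b|^2}{(|a|+|b|)^{2-p}}\ \le\ |a+b|^p-|a|^p-p|a|^{p-2}a\cdot b\ \le\ C(p)\,\frac{|b|^2}{(|a|+|b|)^{2-p}},
\]
by the $C^2$ expansion of $t\mapsto|t|^{p^*_{\alpha,\beta}}$ (since $p^*_{\alpha,\beta}>2$), and by the equation for $U_{1,\alpha,\beta}$ (which cancels the first-order terms), one obtains an inequality
\[
\mathcal{E}(u_n)\ \ge\ c_1\,Q(\rho_n)-o\!\left(\int_{\mathbb{R}^N}|x|^{\alpha}|\nabla U_{1,\alpha,\beta}|^{p-2}|\nabla\rho_n|^2\,dx\right),\qquad c_1>0,
\]
with $Q$ the quadratic form of the linearized problem (\ref{Ppwhl}). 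By Theorem \ref{coroPpwhlp}, in the radial class $\ker Q=\mathbb{R}W_0$ (the $W_{k,i}$, $k\ge1$, are non-radial) and $Q$ is coercive, with a spectral gap, on the $L^2_{\beta,*}$-orthogonal complement of $\mathrm{span}\{U_{1,\alpha,\beta},W_0\}$; the distance minimization and the scaling normalization, together with the second-order tangency of $U_{1,\alpha,\beta}$ and $W_0$ to $\mathcal{M}$, force the component of $\rho_n$ along that two-dimensional space to be of lower order, so that, up to negligible terms, $Q(\rho_n)\ge\mu_0\int_{\mathbb{R}^N}|x|^{\alpha}|\nabla U_{1,\alpha,\beta}|^{p-2}|\nabla\rho_n|^2\,dx$ for some $\mu_0>0$. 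Hölder's inequality with exponents $2/p$ and $2/(2-p)$ then gives
\[
\|\rho_n\|_{\mathcal{D}^{1,p}_\alpha}^2\ \le\ \|U_{1,\alpha,\beta}\|_{\mathcal{D}^{1,p}_\alpha}^{\,2-p}\int_{\mathbb{R}^N}|x|^{\alpha}|\nabla U_{1,\alpha,\beta}|^{p-2}|\nabla\rho_n|^2\,dx,
\]
whence $\mathcal{E}(u_n)\gtrsim d_n^2$, a contradiction; it is exactly this Hölder step that produces the exponent $2$ rather than $p$, and a concrete $B_2>0$ then follows by combining the quantitative forms of these bounds with the soft estimate in the regime $\mathrm{dist}(u,\mathcal{M})\ge\varepsilon_0$.

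The main obstacle is making the second-order expansion rigorous for $1<p<2$: one has to split $\mathbb{R}^N$ into $\{|\nabla\rho_n|\le|\nabla U_{1,\alpha,\beta}|\}$ and its complement, use the appropriate side of the pointwise estimate on each, and exploit that $|\nabla U_{1,\alpha,\beta}|^{p-2}$ is \emph{large} where $\nabla U_{1,\alpha,\beta}$ is small (as $p-2<0$), so that the contribution of the ``large-gradient'' region is itself absorbed into $\int_{\mathbb{R}^N}|x|^{\alpha}|\nabla U_{1,\alpha,\beta}|^{p-2}|\nabla\rho_n|^2\,dx$; relatedly, because the orthogonality produced by the distance minimization is nonlinear, passing it (and the equation for $U_{1,\alpha,\beta}$) to the weak limit of $w_n$, so as to apply the spectral gap, rests on the compact embedding furnished by the transform $r\mapsto r^{p/(p+\beta-\alpha)}$ — playing here the role that $\mathcal{D}^{1,2}_{0,*}(\mathbb{R}^N)\hookrightarrow\hookrightarrow L^2_{0,*}(\mathbb{R}^N)$ plays in \cite{FZ22}.
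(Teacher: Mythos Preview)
Your overall strategy matches the paper's, but there is a genuine gap: you assert ``since $p^*_{\alpha,\beta}>2$'' in order to use the $C^2$ expansion of $t\mapsto|t|^{p^*_{\alpha,\beta}}$, and this is false in general. For $1<p<2$ the constraint $\alpha<p+\beta$ forces $\alpha<2+\beta$, so the threshold $\tfrac{2(N+\alpha)}{N+2+\beta}$ lies strictly below $2$; whenever $1<p\le\tfrac{2(N+\alpha)}{N+2+\beta}$ one has $p^*_{\alpha,\beta}\le2$ (e.g.\ $\alpha=\beta=0$, $N=3$, $p\in(1,6/5]$). In that regime $|t|^{p^*_{\alpha,\beta}}$ is not $C^2$ at $0$, your Taylor expansion of the $L^{p^*_{\alpha,\beta}}_\beta$ term is unavailable, and the would-be remainder $\int|x|^\beta U^{p^*_{\alpha,\beta}-2}\rho_n^2$ need not even be finite for $\rho_n\in\mathcal{D}^{1,p}_{\alpha,r}$. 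The paper (Remark~\ref{remp2c}) splits precisely here: when $p^*_{\alpha,\beta}\le2$ it replaces the $C^2$ expansion by the Orlicz-type inequality \eqref{uinx2pl} of Lemma~\ref{lemui1p*l}, whose second-order term is $\tfrac{(|a|+\mathcal{C}_2|b|)^{p^*_{\alpha,\beta}}}{|a|^2+|b|^2}\,|b|^2$, and then needs the matching compactness statement Lemma~\ref{propcetl} and Poincar\'e inequality Corollary~\ref{propcetlpi} to pass to the limit in the spectral-gap step.

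There is also a secondary looseness. The two-sided bound $|a+b|^p-|a|^p-p|a|^{p-2}a\cdot b\asymp |b|^2/(|a|+|b|)^{2-p}$ you quote does not by itself reproduce the quadratic form $Q$ with the correct leading constant; on $\{|\nabla\rho_n|\gtrsim|\nabla U|\}$ the lower bound is only $\sim|\nabla\rho_n|^p$, not $|\nabla U|^{p-2}|\nabla\rho_n|^2$, so the inequality $\mathcal{E}(u_n)\ge c_1Q(\rho_n)-o(\cdot)$ does not follow. The paper avoids this by using the sharper pointwise inequality \eqref{uinb1p} of Lemma~\ref{lemui1p}, which carries both the exact second-order Taylor term (with the auxiliary vector $\omega$) and an additional nonnegative piece $\min\{|\nabla\rho_n|^p,|\nabla U|^{p-2}|\nabla\rho_n|^2\}$; it is that extra piece, fed into the tailored spectral-gap Lemma~\ref{lemsgap2} and then estimated from below via \eqref{emn21b}, that ultimately yields $\mathcal{E}(u_n)\gtrsim d_n^2$. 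Your final paragraph gestures at the right splitting, but the claim that the large-gradient region ``is itself absorbed into $\int|x|^\alpha|\nabla U|^{p-2}|\nabla\rho_n|^2$'' is exactly what requires this refinement.
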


    \begin{remark}\label{remp2c}\rm
    In this paper, to handle the general case $1<p<N$ and obtain the remainder terms, as stated in \cite{FZ22}, we need to consider three cases $1<p\leq\frac{2(N+\alpha)}{N+2+\beta}$, $\frac{2(N+\alpha)}{N+2+\beta}< p<2$, and $2\leq p<N$ by using different arguments.
    In fact, we note that $p^*_{\alpha,\beta}:=\frac{p(N+\beta)}{N-p+\alpha}\leq 2$ implies $p\leq \frac{2(N+\alpha)}{N+2+\beta}$, $p^*_{\alpha,\beta}>2$ implies $p>\frac{2(N+\alpha)}{N+2+\beta}$, respectively. Moreover, $\frac{2(N+\alpha)}{N+2+\beta}<2$ is equivalent to $\alpha<2+\beta$. Therefore, in order to prove Theorem  \ref{thmprtp2}, we will split our problem into two cases:
    \begin{itemize}
    \item[$(i)$]
    $1<p\leq\frac{2(N+\alpha)}{N+2+\beta}$;
    \item[$(ii)$]
    $\frac{2(N+\alpha)}{N+2+\beta}< p<2$,
    \end{itemize}
    due to $p-N<\alpha<p+\beta$, then $1<p<2$ implies  $\alpha<2+\beta$ and also $\frac{2(N+\alpha)}{N+2+\beta}<2$.
    The reason why we consider the above two cases is that, it needs some appropriate algebraic inequalities which requires to compare $p$ and $p^*_{\alpha,\beta}$ with $2$, see Lemmas \ref{lemui1p} and \ref{lemui1p*l}. However,  $2\leq p<N$ implies $p^*_{\alpha,\beta}> 2$, and the $\mathcal{D}^{1,p}_{\alpha}(\mathbb{R}^N)$ norm is stronger than any weighted $\mathcal{D}^{1,2}_{\alpha,*}(\mathbb{R}^N)$ norm (see \eqref{defd12*nb}), so that we can deal with this case directly.

    Furthermore,
    by using perturbation methods as in \cite{FZ22}, it is easy to verify that the exponents $p$ in Theorem \ref{thmprtp} and $2$ in Theorem \ref{thmprtp2} are sharp.
    \end{remark}

    The paper is organized as follows: In Section \ref{sectpmr}, we deduce the best constant $S^{rad}_p(\alpha,\beta)$ and it's optimizers which proves Theorem \ref{thmPbcm}. In Section \ref{sectlp}, we characterize all the solutions to linearized problem (\ref{Ppwhl}) and prove Theorem \ref{coroPpwhlp}. Finally, in Section \ref{sect:rtp}, we establish the remainder terms of inequality (\ref{Ppbcm}) in suitable radial space, and we split this section into two subsections in order to deal with the cases $2\leq p<N$ and $1<p<2$ respectively.

\section{{\bfseries Sharp constant and optimizers}}\label{sectpmr}

    Let $\mathcal{D}^{1,p}_{\alpha,r}(\mathbb{R}^N)$ be the radial space of $\mathcal{D}^{1,p}_{\alpha}(\mathbb{R}^N)$ as in (\ref{defd12xr}).
    Then the best constant $S^{rad}_p(\alpha,\beta)$ in (\ref{Ppbcm}) can be defined as
    \begin{equation}\label{defbccknp}
    S^{rad}_p(\alpha,\beta):=\inf_{u\in \mathcal{D}^{1,p}_{\alpha,r}(\mathbb{R}^N)\backslash\{0\}}\frac{\int_{\mathbb{R}^N}|x|^{\alpha}|\nabla u|^p dx}{\left(\int_{\mathbb{R}^N}|x|^{\beta}|u|^{p^*_{\alpha,\beta}} dx\right)^{\frac{p}{p^*_{\alpha,\beta}}}}.
    \end{equation}
    We will use a suitable transform that is changing the variable $r\mapsto r^{\frac{p}{p+\beta-\alpha}}$, related to Sobolev inequality to investigate the sharp constant $S^{rad}_p(\alpha,\beta)$ and optimizers.

    \subsection{Proof of Theorem \ref{thmPbcm}.} We follow the arguments in the proof of \cite[Theorem A.1]{GGN13}. Let $u\in \mathcal{D}^{1,p}_{\alpha,r}(\mathbb{R}^N)$. Making the changes that $v(s)=u(r)$ and $r=s^t$ where $t>0$ will be given later, then we have that
    \begin{equation*}
    \begin{split}
    & \int^\infty_0 r^\alpha|u'(r)|^p r^{N-1}dr
    = t^{1-p}\int^\infty_0|v'(s)|^p s^{(N-p+\alpha)t+p-1}ds.
    \end{split}
    \end{equation*}
    Set
    \begin{equation}\label{defPm}
    K:=(N-p+\alpha)t+p>p,
    \end{equation}
    which implies
    \begin{equation*}
    \begin{split}
    \int^\infty_0 r^\alpha|u'(r)|^p r^{N-1}dr
    = t^{1-p}\int^\infty_0|v'(s)|^p s^{K-1}ds.
    \end{split}
    \end{equation*}

    When $K$ is an integer, we use the classical Sobolev inequality (see \cite{Ta76}) and we get
    \begin{equation*}
    \begin{split}
    \int^\infty_0|v'(s)|^p s^{K-1}ds
    \geq & C_p(K)\left(\int^\infty_0|v(s)|^{\frac{pK}{K-p}}s^{K-1}ds\right)^{\frac{K-p}{K}} \\
    = & t^{-\frac{K-p}{K}}C_p(K)\left(\int^\infty_0|u(r)|^{\frac{pK}{K-p}}r^{\frac{K}{t}-1}dr\right)^{\frac{K-p}{K}},
    \end{split}
    \end{equation*}
    where
    \begin{equation*}
    C_p(K)=\pi^{\frac{p}{2}}K\left(\frac{K-p}{p-1}\right)^{p-1}
    \left(\frac{\Gamma(\frac{K}{p})\Gamma(1+K-K/p)}{\Gamma(K/2+1)\Gamma(K)}\right)^{\frac{p}{K}}
    \left(\frac{\Gamma(K/2)}{2\pi^{K/2}}\right)^{\frac{p}{K}},
    \end{equation*}
    see \cite[(2)]{Ta76}. Moreover, even $K$ is not an integer we readily see that the above inequality remains true.

    In order to get (\ref{Ppbcm}), it requires that $\frac{K}{t}-1=N-1+\beta$, therefore we take
    \begin{equation}\label{pq}
    t=\frac{p}{p+\beta-\alpha},
    \end{equation}
    and thus
    \begin{equation}\label{kpx}
    K=\frac{p(N+\beta)}{p+\beta-\alpha}>p, \quad \frac{pK}{K-p}=\frac{p(N+\beta)}{N-p+\alpha}=p^*_{\alpha,\beta}.
    \end{equation}
    So we get
    \begin{equation*}
    \begin{split}
    \int^\infty_0 r^\alpha|u'(r)|^p r^{N-1}dr
    \geq t^{-p+\frac{p}{K}}C_p(K)\left(\int^\infty_0 r^\beta|u(r)|^{p^*_{\alpha,\beta}}r^{N-1}dr\right)^{\frac{p}{p^*_{\alpha,\beta}}},
    \end{split}
    \end{equation*}
    which proves (\ref{Ppbcm}) with
    \begin{equation*}
    \begin{split}
    S^{rad}_p(\alpha,\beta)
    = & t^{-p+\frac{p}{K}}\omega^{1-\frac{p}{p^*_{\alpha,\beta}}}_{N-1} C_p(K) \\
    = & \left(\frac{p+\beta-\alpha}{p}\right)^{\frac{pN-p+(p-1)\beta+\alpha}{N+\beta}}
    \left(\frac{2\pi^{\frac{N}{2}}}{\Gamma(\frac{N}{2})}\right)^{\frac{p+\beta-\alpha}{N+\beta}}
    C_p\left(\frac{p(N+\beta)}{p+\beta-\alpha}\right),
    \end{split}
    \end{equation*}
    where $\omega_{N-1}$ is the surface area for unit ball of $\mathbb{R}^N$.

    Moreover, from the previous inequalities, we also get that the extremal functions are obtained as
    \begin{equation*}
    \begin{split}
    \int^\infty_0|v'_\nu(s)|^p s^{K-1}ds
    = C_p(K)\left(\int^\infty_0|v_\nu(s)|^{\frac{pK}{K-p}}s^{K-1}ds\right)^{\frac{K-p}{K}}.
    \end{split}
    \end{equation*}
    It is well known that
    \begin{equation*}
    \begin{split}
    v_\nu=A\nu^{\frac{K-p}{p}}\left[1+\nu^{\frac{p}{p-1}}s^{\frac{p}{p-1}}\right]^{-\frac{K-p}{p}}
    \end{split}
    \end{equation*}
    for any $A\in\mathbb{R}$ and $\lambda>0$, see \cite{Au76,Ta76}, or \cite{PV21,Sc16} directly. Setting $\nu=\lambda^{1/t}$ and $s=|x|^{1/t}$, then  we get all the extremal radial functions of $S^{rad}_p(\alpha,\beta)$ have the form
    \begin{equation*}
    W_{\lambda,\alpha,\beta}(x)=\frac{A\lambda^{\frac{N-p+\alpha}{p}}}
    {(1+\lambda^{\frac{p+\beta-\alpha}{p-1}}|x|^{\frac{p+\beta-\alpha}{p-1}})^{\frac{N-p+\alpha}{p+\beta-\alpha}}},
    \end{equation*}
    for any $A\in\mathbb{R}$ and $\lambda>0$. Therefore, the proof of Theorem \ref{thmPbcm} is complete.

    \qed

    Then, let us give a brief statement of Corollary \ref{thmPpwh}. Let $u\in\mathcal{D}^{1,p}_{\alpha,r}(\mathbb{R}^N)$ be a positive radial solution of  (\ref{Ppwh}) .  Making the changes that $v(s)=u(r)$ and $|x|=r=s^t$ where $t=p/(p+\beta-\alpha)$, then by direct calculation,  (\ref{Ppwh}) is equivalent to
    \begin{equation}\label{PpwhlWep}
    \begin{split}
    & -|v'(s)|^{p-2}\left(v''(s)+\frac{K-1}{s}v'(s)+(p-2)\right)
    =t^{-\frac{K-p}{p}}|v|^{\frac{KP}{K-p}-2}v,
    \end{split}
    \end{equation}
    in $s\in (0,+\infty)$, where $K=\frac{p(N+\beta)}{p+\beta-\alpha}>p$, and  $v$ satisfies
    \[
    \int^{+\infty}_0|v'(s)|^p s^{K-1}ds<\infty.
    \]
    Then from \cite{Sc16}, we know $v$ must be the form
    \[
    v(s)=
    \frac{L_{K,t}\nu^{\frac{K-p}{p}}}
    {(1+\nu^{\frac{p}{p-1}}s^{\frac{p}{p-1}})^{\frac{K-p}{p}}}
    \]
    for some $\nu>0$, where
    \[
    L_{K,t}=\left[t^{-p}K\left(\frac{K-p}{p-1}\right)
    ^{p-1}\right]^{\frac{K-p}{p^2}}.
    \]
    That is, equation  (\ref{Ppwh}) has a unique (up to scalings)
    positive radial solution of the form
    \begin{equation*}
    u(x)=\frac{C_{N,\alpha,\beta}\lambda^{\frac{N-p+\alpha}{p}}}
    {(1+\lambda^{\frac{p+\beta-\alpha}{p-1}}|x|^{\frac{p+\beta-\alpha}{p-1}})^{\frac{N-p+\alpha}{p+\beta-\alpha}}},
    \end{equation*}
    where $\lambda=\nu^{\frac{p}{p+\beta-\alpha}}$, and
    \[
    C_{N,\alpha,\beta}=\left[(N+\beta)\left(\frac{N-p+\alpha}{p-1}\right)^{p-1}\right]
    ^{\frac{N-p+\alpha}{p(p+\beta-\alpha)}}.
    \]
    Thus, Corollary \ref{thmPpwh} holds.

\section{{\bfseries Linearized problem}}\label{sectlp}

    For simplicity of notations, we write $U$ instead of $U_{1,\alpha,\beta}$ if there is no possibility of confusion. First of all, let us rewrite the linear equation (\ref{Ppwhl}) as
    \begin{align}\label{rwlp}
    & -|x|^2\Delta W -\alpha (x\cdot\nabla W)
    -(p-2) \sum^{N}_{i,j=1}\frac{\partial^2 W}{\partial x_i\partial x_j}x_i x_j
    -\frac{(p-2)(N+\beta)}{1+|x|^{\frac{p+\beta-\alpha}{p-1}}} (x\cdot\nabla W) \nonumber\\
    = & (p^*_{\alpha,\beta}-1)C_{N,\alpha,\beta}^{p^*_{\alpha,\beta}-p}\left(\frac{N-p+\alpha}{p-1}\right)^{2-p}
    \frac{|x|^{\frac{p+\beta-\alpha}{p-1}}}{(1+|x|^{\frac{p+\beta-\alpha}{p-1}})^2}W\quad \mbox{in}\quad \mathbb{R}^N,\quad W\in L^2_{\beta,*}(\mathbb{R}^N).
    \end{align}
    Indeed a straightforward computation shows that
    \begin{align}\label{rwlpy}
    & {\rm div}(|x|^{\alpha}|\nabla U|^{p-2}\nabla W)+(p-2){\rm div}(|x|^{\alpha}|\nabla U|^{p-4}(\nabla U\cdot\nabla W)\nabla U) \nonumber\\
    = & |x|^{\alpha}|\nabla U|^{p-2}\Delta W + \nabla(|x|^{\alpha}|\nabla U|^{p-2})\cdot\nabla W \nonumber\\
     & + (p-2)|x|^{\alpha}|\nabla U|^{p-4}(\nabla U\cdot\nabla W)\Delta U \nonumber\\
     & + (p-2)(\nabla U\cdot\nabla W) (\nabla (|x|^{\alpha}|\nabla U|)\cdot\nabla U) \nonumber\\
     & + (p-2)|x|^{\alpha}|\nabla U| (\nabla (\nabla U \cdot \nabla W)\cdot\nabla U) \nonumber\\
    = & |x|^{\alpha}\Big\{|\nabla U|^{p-2}\left[\Delta W+\alpha|x|^{-2}(x\cdot\nabla W)\right]\nonumber \\
      & + (p-2)|\nabla U|^{p-4}\big[ (\nabla U\cdot\nabla W)\Delta U +2(\nabla U\nabla (\nabla U)\cdot\nabla W)
       \nonumber\\
      & + \alpha |x|^{-2}(\nabla U\cdot\nabla W)(x\cdot\nabla U)
      + (\nabla U\nabla(\nabla W) \cdot \nabla U)\big] \nonumber\\
      & + (p-2)(p-4)|\nabla U|^{p-6}(\nabla U\cdot\nabla W) (\nabla U \nabla(\nabla U) \cdot\nabla U)\Big\},
    \end{align}
    and
    \begin{align}\label{rwlpnu}
    \nabla U
    = & -\frac{c_{N,p}|x|^{\frac{2-p+\beta-\alpha}{p-1}}x}
    {(1+|x|^{\frac{p+\beta-\alpha}{p-1}})^{\frac{N+\beta}{p+\beta-\alpha}}}  \nonumber\\
    (x\cdot\nabla U)
    = &  -\frac{c_{N,p}|x|^{\frac{p+\beta-\alpha}{p-1}}}
    {(1+|x|^{\frac{p+\beta-\alpha}{p-1}})^{\frac{N+\beta}{p+\beta-\alpha}}} \nonumber\\
    (\nabla U\cdot\nabla W)
    = & -\frac{c_{N,p}|x|^{\frac{2-p+\beta-\alpha}{p-1}}}
    {(1+|x|^{\frac{p+\beta-\alpha}{p-1}})^{\frac{N+\beta}{p+\beta-\alpha}}}(x\cdot\nabla W),
    \end{align}
    furthermore,
    \begin{align}\label{rwlpnup2}
    \Delta U
    = & \frac{-c_{N,p}}
    {(1+|x|^{\frac{p+\beta-\alpha}{p-1}})^{\frac{N+\beta}{p+\beta-\alpha}}}
    \left\{\left(\frac{2-p+\beta-\alpha}{p-1}+N\right)|x|^{\frac{2-p+\beta-\alpha}{p-1}}
    -\frac{\frac{N+\beta}{p-1}|x|^{\frac{2+2\beta-2\alpha}{p-1}}}{1+|x|^{\frac{p+\beta-\alpha}{p-1}}}
    \right\} \nonumber\\
    \sum^N_{j=1}\frac{\partial U}{\partial x_j}\frac{\partial^2 U}{\partial x_i\partial x_j}
    = & \frac{c^2_{N,p}}
    {(1+|x|^{\frac{p+\beta-\alpha}{p-1}})^{\frac{2(N+\beta)}{p+\beta-\alpha}}}
    \left\{\frac{1+\beta-\alpha}{p-1}|x|^{\frac{2(2-p+\beta-\alpha)}{p-1}}
    -\frac{\frac{N+\beta}{p-1}|x|^{\frac{4-p+3\beta-3\alpha}{p-1}}}{1+|x|^{\frac{p+\beta-\alpha}{p-1}}}
    \right\}x_i,
    \end{align}
    where
    \begin{equation*}
    \begin{split}
    c_{N,p}:=C_{N,\alpha,\beta}\frac{N-p+\alpha}{p-1}.
    \end{split}
    \end{equation*}
    Here,
    \begin{align*}
    (\nabla U\nabla (\nabla U)\cdot\nabla W)
    = & \sum^N_{i,j}\frac{\partial U}{\partial x_j}\frac{\partial^2 U}{\partial x_i\partial x_j}\frac{\partial W}{\partial x_i} \\
    = & \frac{c^2_{N,p}(x\cdot\nabla W)}
    {(1+|x|^{\frac{p+\beta-\alpha}{p-1}})^{\frac{2(N+\beta)}{p+\beta-\alpha}}}
    \left\{\frac{1+\beta-\alpha}{p-1}|x|^{\frac{2(2-p+\beta-\alpha)}{p-1}}
    -\frac{\frac{N+\beta}{p-1}|x|^{\frac{4-p+3\beta-3\alpha}{p-1}}}{1+|x|^{\frac{p+\beta-\alpha}{p-1}}}
    \right\} \\
    (\nabla U\nabla (\nabla W)\cdot\nabla U)
    = & \sum^N_{i,j}\frac{\partial U}{\partial x_j}\frac{\partial^2 W}{\partial x_i\partial x_j}\frac{\partial U}{\partial x_i} \\
    = & \frac{c^2_{N,p}|x|^{\frac{2(2-p+\beta-\alpha)}{p-1}}}
    {(1+|x|^{\frac{p+\beta-\alpha}{p-1}})^{\frac{2(N+\beta)}{p+\beta-\alpha}}}
    \sum^N_{i,j}\frac{\partial^2 W}{\partial x_i\partial x_j}x_ix_j  \\
    (\nabla U\nabla(\nabla U) \cdot\nabla U)
    = & \sum^N_{i,j}\frac{\partial U}{\partial x_j}\frac{\partial^2 U}{\partial x_i\partial x_j}\frac{\partial U}{\partial x_i} \\
    = & \frac{-c^3_{N,p}|x|^{\frac{p+\beta-\alpha}{p-1}}}
    {(1+|x|^{\frac{p+\beta-\alpha}{p-1}})^{\frac{3(N+\beta)}{p+\beta-\alpha}}}
    \left\{\frac{1+\beta-\alpha}{p-1}|x|^{\frac{2(2-p+\beta-\alpha)}{p-1}}
    -\frac{\frac{N+\beta}{p-1}|x|^{\frac{4-p+3\beta-3\alpha}{p-1}}}{1+|x|^{\frac{p+\beta-\alpha}{p-1}}}
    \right\}.
    \end{align*}

    Then by using the standard spherical decomposition and taking the changes of variable $r\mapsto r^{\frac{p}{p+\beta-\alpha}}$, we can characterize all solutions to the linearized problem (\ref{rwlp}).

    \subsection{Proof of Theorem \ref{coroPpwhlp}.} Since $U$ is radial we can make a partial wave decomposition of (\ref{rwlp}), namely
    \begin{equation}\label{Ppwhl2defvdp}
    W(r,\theta)=\sum^{\infty}_{k=0}\varphi_k(r)\Psi_k(\theta),\quad \mbox{where}\quad r=|x|,\quad \theta=\frac{x}{|x|}\in \mathbb{S}^{N-1},
    \end{equation}
    and
    \begin{equation*}
    \varphi_k(r)=\int_{\mathbb{S}^{N-1}}W(r,\theta)\Psi_k(\theta)d\theta.
    \end{equation*}
    Here $\Psi_k(\theta)$ denotes the $k$-th spherical harmonic, i.e., it satisfies
    \begin{equation}\label{deflk}
    -\Delta_{\mathbb{S}^{N-1}}\Psi_k=\lambda_k \Psi_k,
    \end{equation}
    where $\Delta_{\mathbb{S}^{N-1}}$ is the Laplace-Beltrami operator on $\mathbb{S}^{N-1}$ with the standard metric and  $\lambda_k$ is the $k$-th eigenvalue of $-\Delta_{\mathbb{S}^{N-1}}$. It is well known that \begin{equation}\label{deflklk}
    \lambda_k=k(N-2+k),\quad k=0,1,2,\ldots,
    \end{equation}
    whose multiplicity is $\frac{(N+2k-2)(N+k-3)!}{(N-2)!k!}$ and that \[{\rm Ker}(\Delta_{\mathbb{S}^{N-1}}+\lambda_k)=\mathbb{Y}_k(\mathbb{R}^N)|_{\mathbb{S}^{N-1}},\] where $\mathbb{Y}_k(\mathbb{R}^N)$ is the space of all homogeneous harmonic polynomials of degree $k$ in $\mathbb{R}^N$. It is standard that $\lambda_0=0$ and the corresponding eigenfunction of (\ref{deflk}) is the constant function. The second eigenvalue $\lambda_1=N-1$ and the corresponding eigenfunctions of (\ref{deflk}) are $x_i/|x|$, $i=1,\ldots,N$.

    The following results can be obtained by direct calculation,
    \begin{align}\label{Ppwhl2deflklwp}
    \Delta (\varphi_k(r)\Psi_k(\theta))
    = & \Psi_k\left(\varphi''_k+\frac{N-1}{r}\varphi'_k\right)+\frac{\varphi_k}{r^2}\Delta_{\mathbb{S}^{N-1}}\Psi_k \nonumber\\
    = & \Psi_k\left(\varphi''_k+\frac{N-1}{r}\varphi'_k-\frac{\lambda_k}{r^2}\varphi_k\right).
    \end{align}
    It is easy to verify that
    \begin{equation*}
    \frac{\partial (\varphi_k(r)\Psi_k(\theta))}{\partial x_i}=\varphi'_k\frac{x_i}{r}\Psi_k+\varphi_k\frac{\partial\Psi_k}{\partial \theta_l}\frac{\partial\theta_l}{\partial x_i},
    \end{equation*}
    hence
    \begin{equation}\label{Ppwhl2deflklnp}
    \begin{split}
    x\cdot\nabla (\varphi_k(r)\Psi_k(\theta))=\sum^{N}_{i=1}x_i\frac{\partial (\varphi_k(r)\Psi_k(\theta))}{\partial x_i}=\varphi'_kr\Psi_k+\varphi_k\frac{\partial\Psi_k}{\partial \theta_l}\sum^{N}_{i=i}\frac{\partial\theta_l}{\partial x_i}x_i=\varphi'_kr\Psi_k,
    \end{split}
    \end{equation}
    and
    \begin{align}\label{npp2}
    \sum^N_{i,j=1}\frac{\partial^2 (\varphi_k(r)\Psi_k(\theta))}{\partial x_i\partial x_j}x_ix_j
    = & 2 \varphi'_kr\frac{\partial\Psi_k}{\partial \theta_l}\sum^N_{i=1}\frac{\partial\theta_l}{\partial x_i}x_i
    + \varphi_k\frac{\partial^2\Psi_k}{\partial \theta_l\partial \theta_m}\sum^N_{i,j=1}\frac{\partial\theta_l}{\partial x_i}x_i\frac{\partial\theta_m}{\partial x_j}x_j\nonumber \\
    & + \frac{\partial\Psi_k}{\partial \theta_l}\varphi_k\sum^N_{i,j=1}\frac{\partial^2\theta_l}{\partial x_i\partial x_j}x_ix_j
    +\varphi''_kr^2\Psi_k
    = \varphi''_kr^2\Psi_k,
    \end{align}
    since
    \begin{equation*}
    \begin{split}
    \sum^N_{i=1}\frac{\partial\theta_l}{\partial x_i}x_i=0\quad \mbox{and}\quad \sum^N_{i,j=1}\frac{\partial^2\theta_l}{\partial x_i\partial x_j}x_ix_j=0,\quad l=1,\ldots,N-1.
    \end{split}
    \end{equation*}
    Then putting together (\ref{Ppwhl2defvdp}), (\ref{Ppwhl2deflklwp}), (\ref{Ppwhl2deflklnp}) and (\ref{npp2}) into (\ref{rwlp}), the function $W^{(p)}$ is a solution of (\ref{rwlp}) if and only if $\varphi_k\in\mathcal{W}$ is a classical solution of the system
    \begin{eqnarray}\label{Ppwhl2p2tpy}
    \left\{ \arraycolsep=1.5pt
       \begin{array}{ll}
        (p-1)\varphi''_k+\frac{\varphi'_k}{r}\left[(N+\alpha-1)+\frac{(p-2)(N+\beta)}{1+r^{\frac{p+\beta-\alpha}{p-1}}}\right]
    -\frac{\lambda_k}{r^2}\varphi_k \\[4mm]
    + (p^*_{\alpha,\beta}-1)C_{N,\alpha,\beta}^{p^*_{\alpha,\beta}-p}\left(\frac{N-p+\alpha}{p-1}\right)^{2-p}
    \frac{r^{\frac{p+\beta-\alpha}{p-1}-2}}{\left(1+r^{\frac{p+\beta-\alpha}{p-1}}\right)^2}\varphi_k=0 \quad \mbox{in}\quad r\in(0,\infty),\\[4mm]
        \varphi'_k(0)=0 \quad\mbox{if}\quad k=0,\quad \mbox{and}\quad \varphi_k(0)=0 \quad\mbox{if}\quad k\geq 1,
        \end{array}
    \right.
    \end{eqnarray}
    where $\mathcal{W}:=\{\omega\in C^1([0,\infty))| \int^\infty_0 r^\beta U^{p^*_{
    \alpha,\beta}-2}|\omega(r)|^2 r^{N-1} dr<\infty\}$.
    We use the change of variables $|x|=r=s^t$ where $t=p/(p+\beta-\alpha)$ and let
    \begin{equation}\label{Ppwhl2p2txyp}
    \eta_k(s)=\varphi_k(r),
    \end{equation}
    that transforms (\ref{Ppwhl2p2tpy}) into the following equations for any $\eta_k\in \widetilde{\mathcal{W}}$, $k=0,1,2,\ldots,$
    \begin{equation}\label{Ppwhl2p2tp}
    \eta''_k+\frac{\eta'_k}{s}\left(\frac{K-1}{p-1}+\frac{(p-2)K}{(p-1)(1+s^{\frac{p}{p-1}})}\right)-\frac{t^2\lambda_k}{(p-1)s^2}\eta_k
        +\frac{K(Kp-K+p)}{(p-1)^2}\frac{s^{\frac{p}{p-1}-2}}{(1+s^{\frac{p}{p-1}})^2}\eta_k=0.
    \end{equation}
    where $\widetilde{\mathcal{W}}:=\{\omega\in C^1([0,\infty))| \int^\infty_0 V^{\frac{Kp}{K-p}-2}|\omega(s)|^2 s^{K-1} ds<\infty\}$, here $V(s)=U(r)$ and
    \begin{equation}
    K=\frac{p(N+\beta)}{p+\beta-\alpha}>p.
    \end{equation}
    Here we have used the fact  
    \begin{equation*}
    t^2(p^*_{\alpha,\beta}-1)C_{N,\alpha,\beta}^{p^*_{\alpha,\beta}-p}\left(\frac{N-p+\alpha}{p-1}\right)^{2-p}
    =\frac{K(Kp-K+p)}{p-1}.
    \end{equation*}

    Fix $K$ let us now consider the following eigenvalue problem
    \begin{equation}\label{Ppwhl2p2tep}
    \eta''+\frac{\eta'}{s}\left(\frac{K-1}{p-1}+\frac{(p-2)K}{(p-1)(1+s^{\frac{p}{p-1}})}\right)-\frac{\mu}{(p-1)s^2}\eta
        +\frac{K(Kp-K+p)}{(p-1)^2}\frac{s^{\frac{p}{p-1}-2}}{(1+s^{\frac{p}{p-1}})^2}\eta=0.
    \end{equation}
    When $K$ is an integer we can study (\ref{Ppwhl2p2tep}) as the linearized operator of the equation
    \begin{equation*}
    -{\rm div}(|\nabla u|^{p-2}\nabla u)=K\left(\frac{K-p}{p-1}\right)^{p-1}u^{\frac{Kp}{K-p}-1} ,\quad u>0 \quad \mbox{in}\quad \mathbb{R}^N, \quad u\in \mathcal{D}^{1,p}_0(\mathbb{R}^K).
    \end{equation*}
    around the standard solution $u(x)=(1+|x|^{p/(p-1)})^{-(K-p)/p}$ (note that we always have $K>p$). In this case, as in \cite[Proposition 3.1]{FN19}, we have that
    \begin{equation}\label{Ppwhl2ptevp}
    \mu_0=0; \quad \mu_1=K-1\quad \mbox{and}\quad \eta_0(s)=\frac{(p-1)-s^{\frac{p}{p-1}}}{(1+s^{\frac{p}{p-1}})^\frac{K}{p}}; \quad \eta_1(s)=\frac{s^{\frac{1}{p-1}}}{(1+s^{\frac{p}{p-1}})^\frac{K}{p}}.
    \end{equation}
    Moreover, even $K$ is not an integer we readily see that (\ref{Ppwhl2ptevp}) remains true. Therefore, we can conclude that (\ref{Ppwhl2p2txyp}) has nontrivial solutions if and only if
    \begin{equation*}
    t^2\lambda_k\in \{0,K-1\},%\quad \mbox{i.e.,}\quad \frac{p^2\lambda_k}{(p+\beta-\alpha)^2}\in \left\{0,\frac{p(N-1)+(p-1)\beta+\alpha}{p+\beta-\alpha}\right\},
    \end{equation*}
    where $\lambda_k$ is given in \eqref{deflklk}. If $t^2\lambda_k=0$ then $k=0$. Moreover, if $t^2\lambda_k=K-1$, that is
    \begin{equation}\label{ppk}
    \frac{(p+\beta-\alpha)[(N-1)p+\beta(p-1)+\alpha]}{p^2}=k(N-2+k)\quad \mbox{for some}\quad k\in\mathbb{N}^+.
    \end{equation}
   Turning back to (\ref{Ppwhl2p2tpy}) we obtain the solutions that if (\ref{ppk}) holds,
    \begin{equation}\label{Ppwhl2pyep}
    \varphi_0(r)=\frac{(p-1)-r^{\frac{p+\beta-\alpha}{p-1}}}{(1+r^{\frac{p+\beta-\alpha}{p-1}})^\frac{N+\beta}{p+\beta-\alpha}},\quad
    \varphi_k(r)=\frac{r^{\frac{p+\beta-\alpha}{p(p-1)}}}{(1+r^{\frac{p+\beta-\alpha}{p-1}})^\frac{N+\beta}{p+\beta-\alpha}},
    \end{equation}
    otherwise
    \begin{equation}\label{Ppwhl2pyfp}
    \varphi_0(r)=\frac{(p-1)-r^{\frac{p+\beta-\alpha}{p-1}}}{(1+r^{\frac{p+\beta-\alpha}{p-1}})^\frac{N+\beta}{p+\beta-\alpha}}.
    \end{equation}
    That is, if (\ref{ppk}) holds then the space of solutions of (\ref{rwlp}) has dimension $1+\frac{(N+2k-2)(N+k-3)!}{(N-2)!k!}$ and is spanned by
    \begin{equation*}
    W_0(x)=\frac{(p-1)-|x|^{\frac{p+\beta-\alpha}{p-1}}}{(1+|x|^{\frac{p+\beta-\alpha}{p-1}})^\frac{N+\beta}{p+\beta-\alpha}},\quad W_{k,i}(x)=\frac{|x|^{\frac{p+\beta-\alpha}{p(p-1)}}\Psi_{k,i}(x)}{(1+|x|^{\frac{p+\beta-\alpha}{p-1}})^\frac{N+\beta}{p+\beta-\alpha}},
    \end{equation*}
    where $\{\Psi_{k,i}\}$, $i=1,\ldots,\frac{(N+2k-2)(N+k-3)!}{(N-2)!k!}$, form a basis of $\mathbb{Y}_k(\mathbb{R}^N)$, the space of all homogeneous harmonic polynomials of degree $k$ in $\mathbb{R}^N$.
    Otherwise the space of solutions of (\ref{rwlp}) has dimension one and is spanned by $W_0(x)$.
    Thus, the proof of Theorem \ref{coroPpwhlp} is complete.

    \qed

\section{{\bfseries Remainder terms of (CKN) inequality (\ref{Ppbcm})}}\label{sect:rtp}

    In this section, we follow the steps of Figalli and Zhang \cite{FZ22} to give the gradient remainder term of (CKN) inequality (\ref{Ppbcm}) in radial space for all $1<p<N$.

    Firstly, we introduce some useful inequalities.

    \begin{lemma}\label{lemui1p}
    \cite[Lemmas 2.1]{FZ22} Let $x, y\in\mathbb{R}^N$. Then for any $\kappa>0$, there exists a constant $\mathcal{C}_1=\mathcal{C}_1(p,\kappa)>0$ such that
     the following inequalities hold.

    $\bullet$ For $1<p<2$,
    \begin{align}\label{uinb1p}
    |x+y|^p
    \geq & |x|^p+ p|x|^{p-2}x\cdot y+ \frac{1-\kappa}{2}\left(p|x|^{p-2}|y|^2+ p(p-2)|\omega|^{p-2}(|x|-|x+y|)^2 \right) \nonumber\\
    & +\mathcal{C}_1\min\{|y|^p,|x|^{p-2}|y|^2\},
    \end{align}
    where
    \begin{eqnarray*}%\label{l1b1}
    \omega=\omega(x,x+y)=
    \left\{ \arraycolsep=1.5pt
       \begin{array}{ll}
        \left(\frac{|x+y|}{(2-p)|x+y|+(p-1)|x|}\right)^{\frac{1}{p-2}}x,\ \ &{\rm if}\ \ |x|<|x+y|,\\[3mm]
        x,\ \ &{\rm if}\ \  |x+y|\leq |x|.
        \end{array}
    \right.
    \end{eqnarray*}
    Furthermore, it is easy to verify that $|x|^{p-2}|y|^2+(p-2)|\omega|^{p-2}(|x|-|x+y|)^2\geq 0$.

    $\bullet$ For $p\geq 2$,
    \begin{align}\label{uinb2p}
    |x+y|^p
    \geq & |x|^p+ p|x|^{p-2}x\cdot y+ \frac{1-\kappa}{2}\left(p|x|^{p-2}|y|^2+ p(p-2)|\omega|^{p-2}(|x|-|x+y|)^2 \right) \nonumber\\
    & +\mathcal{C}_1 |y|^p ,
    \end{align}
    where
    \begin{eqnarray*}%\label{l1b1}
    \omega=\omega(x,x+y)=
    \left\{ \arraycolsep=1.5pt
       \begin{array}{ll}
        x,\ \ &{\rm if}\ \ |x|<|x+y|,\\[3mm]
        \left(\frac{|x+y|}{|x|}\right)^{\frac{1}{p-2}}(x+y),\ \ &{\rm if}\ \  |x+y|\leq |x|.
        \end{array}
    \right.
    \end{eqnarray*}
    \end{lemma}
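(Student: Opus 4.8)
The statement is quoted from \cite[Lemma~2.1]{FZ22}; here I only outline the strategy one follows. The inequalities are pointwise and purely algebraic, and they are jointly positively $p$-homogeneous: replacing $(x,y)$ by $(\lambda x,\lambda y)$ with $\lambda>0$ multiplies every term by $\lambda^p$ — including $|\omega|^{p-2}(|x|-|x+y|)^2$, since $\omega$ is $1$-homogeneous in $(x,x+y)$, and $\min\{|y|^p,|x|^{p-2}|y|^2\}$. They are also invariant under a common rotation of $x$ and $y$. Hence (away from the irrelevant locus $x=0$) one may assume $x=|x|e_1$ and $y$ in the $e_1e_2$-plane, and normalize $|x|^2+|y|^2=1$; everything then reduces to a two-parameter inequality on a compact set.

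The backbone is the sharp second-order Taylor expansion of $F(z)=|z|^p$. Writing $\rho(t)=|x+ty|$ (so $\rho'(t)=(x+ty)\cdot y/\rho(t)$ and $\int_0^1(-\rho')=|x|-|x+y|$), Taylor's formula with integral remainder gives
\[
|x+y|^p=|x|^p+p|x|^{p-2}x\cdot y+\int_0^1(1-t)\,D^2F(x+ty)[y,y]\,dt ,
\]
where the integrand satisfies $D^2F(x+ty)[y,y]=p(p-1)\rho(t)^{p-2}\rho'(t)^2+p\,\rho(t)^{p-2}\big(|y|^2-\rho'(t)^2\big)\ge0$, both summands being nonnegative (the second by Cauchy--Schwarz). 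The content of the lemma with $\kappa=0$ — namely $|x+y|^p\ge|x|^p+p|x|^{p-2}x\cdot y+\tfrac12\big(p|x|^{p-2}|y|^2+p(p-2)|\omega|^{p-2}(|x|-|x+y|)^2\big)$ — follows by isolating the contribution $\big(\int_0^1\rho'\big)^2=(|x|-|x+y|)^2$ from the integral and applying Jensen's inequality against the weight $\rho(t)^{p-2}$: the function $\omega$ is precisely the weight making that convexity step an equality (the factor $\big(\tfrac{|x+y|}{(2-p)|x+y|+(p-1)|x|}\big)^{1/(p-2)}$ appearing for $|x|<|x+y|$ is the level of $\rho$ at the saturating point), and evaluating $\omega$ explicitly also shows the bracket $|x|^{p-2}|y|^2+(p-2)|\omega|^{p-2}(|x|-|x+y|)^2$ is nonnegative, so that the $\kappa$-fraction discarded from the second-order term is $\ge0$.

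It remains to upgrade the $\kappa=0$ inequality to the quantitative one. After the reductions of the first paragraph, set
\[
\Phi(x,y):=\frac{|x+y|^p-|x|^p-p|x|^{p-2}x\cdot y-\tfrac{1-\kappa}{2}\big(p|x|^{p-2}|y|^2+p(p-2)|\omega|^{p-2}(|x|-|x+y|)^2\big)}{\min\{|y|^p,\,|x|^{p-2}|y|^2\}}
\]
(with $|y|^p$ alone in the denominator when $p\ge2$). On the compact parameter set $\Phi$ is continuous wherever the denominator does not vanish, and by the $\kappa=0$ step its numerator equals $\big[\text{the }\kappa=0\text{ surplus}\big]+\tfrac{\kappa}{2}\times(\text{nonnegative bracket})$, hence is $\ge0$. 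Strict positivity of $\Phi$ at the degenerate configurations where the denominator degenerates is checked by expanding one order further: when $y$ is nearly parallel to $x$ the discarded $\tfrac{\kappa}{2}$-bracket is comparable to $|x|^{p-2}|y|^2$ and dominates; when $|y|$ is large relative to $|x|$ the Taylor remainder itself grows like $|y|^p$; when $y\to0$ with $|x|$ bounded below the surplus is of order $|y|^2$. A routine compactness argument then yields the uniform $\mathcal{C}_1=\mathcal{C}_1(p,\kappa)>0$. The one genuinely delicate point is the bookkeeping of the second paragraph: pinning down $\omega$ so that the $(|x|-|x+y|)^2$-term is captured with the sharp constant and the correct sign in each of the ranges $1<p<2$ and $p\ge2$, while still leaving the quantitative surplus that feeds the remainder term.
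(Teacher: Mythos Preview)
The paper does not prove this lemma at all: it is stated with the citation \cite[Lemmas 2.1]{FZ22} and used as a black box, with no argument given. You correctly flag this at the outset and then supply a sketch of the underlying proof from Figalli--Zhang, so in that sense your write-up strictly exceeds what the paper does here.

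Your outline is faithful to the actual argument in \cite{FZ22}: the reduction by $p$-homogeneity and rotation invariance to a compact two-parameter set, the Taylor expansion of $|z|^p$ along $t\mapsto x+ty$ with the Hessian split $D^2F(z)[y,y]=p(p-1)|z|^{p-2}(\tfrac{z}{|z|}\cdot y)^2+p|z|^{p-2}(|y|^2-(\tfrac{z}{|z|}\cdot y)^2)$, and then the compactness/continuity argument to extract the quantitative surplus $\mathcal{C}_1$. One point you should be careful about if you ever want to turn this into a full proof: the heuristic that ``$\omega$ is precisely the weight making that convexity step an equality'' is suggestive but not quite how the constant is pinned down in \cite{FZ22}; there the sharp second-order term with $\omega$ is obtained by an explicit integration (bounding $\int_0^1(1-t)\rho(t)^{p-2}\rho'(t)^2\,dt$ from below via monotonicity of $\rho^{p-2}$ on the two regimes $|x|\lessgtr|x+y|$), not by an abstract Jensen argument. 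Also, the claim that near-parallel $y$ produces a $\tfrac{\kappa}{2}$-bracket comparable to $|x|^{p-2}|y|^2$ needs an explicit check in the case $1<p<2$, since the bracket is a \emph{difference} of two positive terms. These are the only places your sketch is looser than the source; otherwise it matches.
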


    \begin{lemma}\label{lemui1p*l}
    Let $a, b\in\mathbb{R}$. Then for any $\kappa>0$, there exists a constant $\mathcal{C}_2=\mathcal{C}_2(p^*_{\alpha,\beta},\kappa)>0$ where $p^*_{\alpha,\beta}=\frac{p(N+\beta)}{N-p+\alpha}$ such that
     the following inequalities hold.

    $\bullet$ For $1<p\leq\frac{2(N+\alpha)}{N+2+\beta}$,
    \begin{equation}\label{uinx2pl}
    |a+b|^{p^*_{\alpha,\beta}}
    \leq |a|^{p^*_{\alpha,\beta}}+ p^*_{\alpha,\beta}|a|^{p^*_{\alpha,\beta}-2}a b
    + \left(\frac{p^*_{\alpha,\beta}(p^*_{\alpha,\beta}-1)}{2}+\kappa\right)\frac{(|a|+\mathcal{C}_2|b|)^{{p^*_{\alpha,\beta}}}|b|^2}{|a|^2+|b|^2}|b|^{2}.
    \end{equation}

    $\bullet$ For $\frac{2(N+\alpha)}{N+2+\beta}< p<N$,
    \begin{equation}\label{uinx2pb}
    |a+b|^{p^*_{\alpha,\beta}}
    \leq |a|^{p^*_{\alpha,\beta}}+ p^*_{\alpha,\beta}|a|^{p^*_{\alpha,\beta}-2}a b
    + \left(\frac{p^*_{\alpha,\beta}(p^*_{\alpha,\beta}-1)}{2}+\kappa\right)|a|^{{p^*_{\alpha,\beta}}-2}|b|^2 +\mathcal{C}_2|b|^{p^*_{\alpha,\beta}}.
    \end{equation}
    \end{lemma}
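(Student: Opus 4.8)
The plan is to prove Lemma \ref{lemui1p*l} by a homogeneity reduction followed by a local Taylor analysis near the singular point $a=-b$ and a compactness/continuity argument away from it, exactly mirroring the structure used for Lemma \ref{lemui1p} in \cite{FZ22}. Write $q:=p^*_{\alpha,\beta}$ for brevity. By homogeneity (both sides scale like $|t|^q$ under $(a,b)\mapsto(ta,tb)$), after discarding the trivial case $b=0$ it suffices to set $b=1$ and prove the one-variable inequality for $a\in\mathbb{R}$; the case $b=-1$ follows by the symmetry $a\mapsto -a$. So I reduce to showing: for $f(a):=|a+1|^q-|a|^q-q|a|^{q-2}a$, one has $f(a)\le \big(\tfrac{q(q-1)}{2}+\kappa\big)\,\tfrac{(|a|+\mathcal C_2)|a|^0\cdots}{\ldots}$ — more precisely the right-hand side of \eqref{uinx2pl} resp.\ \eqref{uinx2pb} with $|b|=1$. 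The key point separating the two regimes is whether $q\le 2$ or $q>2$, which (as noted in Remark \ref{remp2c}) is equivalent to $p\le\frac{2(N+\alpha)}{N+2+\beta}$ resp.\ $p>\frac{2(N+\alpha)}{N+2+\beta}$; the difference in the stated remainder forms is dictated by the growth of $f$ at $a=\pm\infty$ (where $f(a)\sim q|a|^{q-2}$ for large $|a|$, so the remainder must behave like $|a|^{q-2}$, which one packages either as $|a|^{q-2}|b|^2$ when $q>2$ or, when $q\le 2$, via the bounded factor $\tfrac{|b|^2}{|a|^2+|b|^2}$ times $(|a|+\mathcal C_2|b|)^q$).

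\textbf{Step 1 (behaviour near $a=-1$, i.e. $a+b=0$).} Near the point where $a+b$ vanishes the function $|a+b|^q$ has a $C^2$ Taylor expansion if $q>2$ but only a $C^{1,q-1}$ expansion if $1<q<2$; in all cases, for $a$ in a fixed small neighbourhood of $-1$, I will expand $|a|^q+q|a|^{q-2}a\,(a+(-1))$-type terms — actually it is cleaner to Taylor-expand the \emph{left} side: $|a+1|^q = |(a+1)|^q$ is itself the small quantity, and I compare it against $|a|^q+q|a|^{q-2}(a+1-1)\cdot(\ldots)$. The mechanism is: write $g(a):=|a|^q+q|a|^{q-2}a\cdot 1$ for the first two terms of the claimed expansion around the base point; then $|a+1|^q-g(a)$ is, near $a=-1$, controlled by $\big(\tfrac{q(q-1)}{2}+\kappa\big)|a|^{q-2}$ up to a term absorbed into $\mathcal C_2$ when $q>2$, since $g''(a)=q(q-1)|a|^{q-2}$, and similarly for $q\le 2$ but keeping the extra convexity error honest via the explicit remainder factor. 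The constant $\kappa>0$ buys the slack needed to absorb the higher-order Taylor remainder on a \emph{fixed} neighbourhood of $a=-1$.

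\textbf{Step 2 (the region bounded away from $a=-1$ and from $\infty$).} On any compact set $K\subset\mathbb{R}$ avoiding $a=-1$, the function $|a+1|^q$ is smooth, and the inequality with $\kappa/2$ in place of $\kappa$ and \emph{some} finite constant in the remainder is a consequence of the strict convexity of $t\mapsto |t|^q$ (so the pure Taylor inequality $|a+1|^q\le |a|^q+q|a|^{q-2}a+\tfrac{q(q-1)}{2}\sup_{[a,a+1]}|\cdot|^{q-2}$ already holds) plus the fact that the remainder term on the right of \eqref{uinx2pl}/\eqref{uinx2pb} is bounded below by a positive constant on $K$ once we multiply by $\mathcal C_2$ large. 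Combine Steps 1–2 by a partition of $\mathbb R$ into $\{|a+1|<\delta\}\cup\{\delta\le|a+1|, |a|\le M\}\cup\{|a|>M\}$ for suitable $\delta,M$.

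\textbf{Step 3 (the region $|a|\to\infty$).} For $|a|$ large, factor $|a+1|^q=|a|^q|1+a^{-1}|^q$ and Taylor expand in $\varepsilon=a^{-1}\to0$: $|1+\varepsilon|^q = 1+q\varepsilon+\tfrac{q(q-1)}{2}\varepsilon^2+O(\varepsilon^3)$, hence $|a+1|^q-|a|^q-q|a|^{q-2}a=\tfrac{q(q-1)}{2}|a|^{q-2}+O(|a|^{q-3})$, which is $\le\big(\tfrac{q(q-1)}{2}+\kappa\big)|a|^{q-2}$ for $|a|\ge M(\kappa)$ — this gives \eqref{uinx2pb} directly (with $|b|=1$), and for \eqref{uinx2pl} note $\tfrac{(|a|+\mathcal C_2)^q}{|a|^2+1}\ge c\,|a|^{q-2}$ for large $|a|$ so the same bound fits. \textbf{The main obstacle} I anticipate is Step 1: the lack of $C^2$ regularity of $t\mapsto|t|^q$ at the origin when $1<q<2$ means the second-order Taylor error near $a=-1$ is only Hölder, not quadratic, so one cannot naively write a Peano remainder — instead I will use the integral form $|a+1|^q-g(a)=\int_0^1(1-s)\,\Phi''\big(\text{path}\big)\,ds$ only where legitimate and otherwise bound $|a+1|^q$ crudely by $|a+1|^q\le C|a+1|^2$ near $a=-1$ (valid since $q<2$ makes $|t|^q\ge |t|^2$ fails — rather $|t|^q\le |t|^2$ for $|t|\le1$, which is the right direction for an upper bound on the left side), feeding this into the bounded factor $\tfrac{|b|^2}{|a|^2+|b|^2}\sim |a+1-(-1)|^{\ldots}$ packaging on the right; getting the geometry of that denominator to cooperate near $a=-1$ (where $|a|^2+|b|^2\approx 1+1=2$ is \emph{not} small) is the delicate bookkeeping, and it is precisely why the statement uses $(|a|+\mathcal C_2|b|)^{q}$ rather than $|a|^{q-2}$ in the regime $q\le2$: the numerator stays comparable to a constant there while supplying the needed $|a+1|^2$-type smallness through $|b|^2=1$ being multiplied against a factor that we are free to take as large as $\mathcal C_2^q$. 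Once Step 1 is pinned down, Steps 2–3 are routine continuity and asymptotics.
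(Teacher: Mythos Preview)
Your approach is essentially correct in spirit but far more elaborate than what the paper does. The paper's proof is a two-line observation: the conditions $1<p\le\frac{2(N+\alpha)}{N+2+\beta}$ and $\frac{2(N+\alpha)}{N+2+\beta}<p<N$ are equivalent to $q:=p^*_{\alpha,\beta}\le 2$ and $q>2$ respectively, and once this is noted the inequalities \eqref{uinx2pl} and \eqref{uinx2pb} are \emph{exactly} \cite[Lemma 2.4]{FZ22} and \cite[Lemma 3.2]{FN19} applied with exponent $q$. No further argument is given or needed.

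What you are doing is reproving those cited lemmas from scratch via the standard homogeneity-plus-compactness-plus-asymptotics scheme. That scheme is valid, and your Step~3 is the crux and is correct: for $|a|\to\infty$ the Taylor expansion in $a^{-1}$ yields $f(a)=\tfrac{q(q-1)}{2}|a|^{q-2}+O(|a|^{q-3})$, so the $\kappa$-slack absorbs the error for $|a|\ge M(\kappa)$; on the remaining bounded set $\{|a|\le M\}$ the remainder term on the right can be made arbitrarily large by taking $\mathcal C_2$ large (in the $q\le 2$ case because $(|a|+\mathcal C_2)^q/(|a|^2+1)\ge \mathcal C_2^q/(M^2+1)$, in the $q>2$ case because of the free $+\mathcal C_2$). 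Your Step~1 is over-engineered and contains a slip: for $q<2$ and $|t|\le1$ one has $|t|^q\ge|t|^2$, not $\le$, so the crude bound you propose there is false --- but this does not matter, since near $a=-1$ the left side $|a+1|^q$ vanishes while the right side stays bounded away from zero, so no Taylor analysis is needed there at all; this region is already handled by the compactness argument of Step~2.

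In short: your route produces a self-contained argument, while the paper's route is to recognise that the lemma is a relabelling of known results and simply cite them.
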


    \begin{proof}
    We notice that $1<p\leq\frac{2(N+\alpha)}{N+2+\beta}$ indicates $p^*_{\alpha,\beta}\leq2$ and $\frac{2(N+\alpha)}{N+2+\beta}< p<N$ indicates $p^*_{\alpha,\beta}> 2$, then the above inequalities directly follow from \cite[Lemma 3.2]{FN19} and \cite[Lemma 2.4]{FZ22}.
    \end{proof}

    For simplicity of notations, we write $U_{\lambda}$ instead of $U_{\lambda,\alpha,\beta}$  as in (\ref{defeu}) and $S_r$ instead of $S^{rad}_p(\alpha,\beta)$, particularly, we write $U=U_1$ instead of $U_{1,\alpha,\beta}$
    if there is no possibility of confusion. Moreover, in order to shorten formulas, for each $u\in \mathcal{D}^{1,p}_{\alpha}(\mathbb{R}^N)$ we denote
    \begin{equation*}%\label{def:norm}
    \begin{split}
    \|u\|:
    =\left(\int_{\mathbb{R}^N}|x|^{\alpha}|\nabla u|^p dx\right)^{\frac{1}{p}},
    \quad \|u\|_*: =\left(\int_{\mathbb{R}^N}|x|^{\beta}|u|^{p^*_{\alpha,\beta}} dx\right)^{\frac{1}{p^*_{\alpha,\beta}}}.
    \end{split}
    \end{equation*}

    Denote $C^1_{c,0}(\mathbb{R}^N)$  be the space of compactly supported functions of class $C^1$ that are constant in a neighborhood of the origin, then we define the weighted Sobolev space $\mathcal{D}^{1,2}_{\alpha,*}(\mathbb{R}^N)$ as the completion of $C^1_{c,0}(\mathbb{R}^N)$ with respect to the
    inner product
    \[
    \langle u,v\rangle_*=\int_{\mathbb{R}^N}|x|^\alpha|\nabla U|^{p-2} \nabla u\cdot \nabla v dx,
    \]
    and the norm
    \begin{equation}\label{defd12*nb}
    \|u\|_{\mathcal{D}^{1,2}_{\alpha,*}(\mathbb{R}^N)}:=\langle u,u\rangle_*^{\frac{1}{2}}=\left(\int_{\mathbb{R}^N}|x|^\alpha|\nabla U|^{p-2} |\nabla u|^2  dx\right)^{\frac{1}{2}}.
    \end{equation}

    \begin{remark}\label{remdefsn2}\rm
    As stated in \cite[Remark 3.1]{FZ22}, it is important for us to considwer weighted that are not necessarily integrable at the origin, since $|x|^{\alpha}|\nabla U|^{p-2}\sim |x|^{\frac{(p-2)(1+\beta-\alpha)}{p-1}+\alpha}\not\in L^1(B_1)$ for $p\leq \frac{N+\alpha +2(1+\beta-\alpha)}{N+1+\beta}$. This is why, when defining weighted Sobolev spaces, we consider the space $C^1_{c,0}(\mathbb{R}^N)$, so that gradients vanish near zero. Of course, replacing $C^1(\mathbb{R}^N)$ by $C^1_{c,0}(\mathbb{R}^N)$ plays no role in the case $p> \frac{N+\alpha +2(1+\beta-\alpha)}{N+1+\beta}$.
    \end{remark}

    The following embedding theorem generalizes \cite[Corollary 6.2]{FN19}.
    \begin{proposition}\label{propcet}
    Let $1<p<N$. The space $\mathcal{D}^{1,2}_{\alpha,*}(\mathbb{R}^N)$ compactly embeds into $L^2_{\beta,*}(\mathbb{R}^N)$ at least for radial case, where $L^2_{\beta,*}(\mathbb{R}^N)$ is defined in \eqref{defd12*n}.
    \end{proposition}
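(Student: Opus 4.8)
The plan is to reduce everything to a one-dimensional weighted Hardy-type compactness statement via the same change of variables $r \mapsto r^{p/(p+\beta-\alpha)}$ used throughout the paper, and then to invoke (or reprove) the corresponding result in the unweighted setting of \cite{FN19}. First I would restrict to radial functions, write $u \in \mathcal{D}^{1,2}_{\alpha,*}(\mathbb{R}^N)$ as $u = u(r)$, and set $v(s) = u(r)$ with $r = s^t$, $t = p/(p+\beta-\alpha)$, exactly as in the proof of Theorem~\ref{thmPbcm}. Recalling that $V(s) = U(r)$ is (up to a constant) the standard Talenti profile $(1+s^{p/(p-1)})^{-(K-p)/p}$ with $K = p(N+\beta)/(p+\beta-\alpha) > p$, a direct computation (the Jacobian of the substitution together with $U_{1,\alpha,\beta}$ as in \eqref{defeu}) should show that, up to positive multiplicative constants depending only on $N,p,\alpha,\beta$,
\[
\int_{\mathbb{R}^N} |x|^\alpha |\nabla U|^{p-2} |\nabla u|^2\, dx \;\asymp\; \int_0^\infty V^{p-2} |v'(s)|^2 s^{K-1}\, ds,
\qquad
\int_{\mathbb{R}^N} |x|^\beta U^{p^*_{\alpha,\beta}-2} u^2\, dx \;\asymp\; \int_0^\infty V^{\frac{Kp}{K-p}-2} v^2 s^{K-1}\, ds.
\]
This is the same bookkeeping already carried out in Section~\ref{sectlp} when passing from \eqref{Ppwhl2p2tpy} to \eqref{Ppwhl2p2tp}, so it is routine; one only has to be careful that the weight $|x|^\alpha|\nabla U|^{p-2}$ need not be integrable near the origin, which is precisely why the spaces were built as completions of $C^1_{c,0}$, and the substitution respects this (functions constant near $r=0$ correspond to functions constant near $s=0$).

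With the two norms identified, the claim becomes: the completion of $C^1_{c,0}((0,\infty))$ under $\big(\int_0^\infty V^{p-2}|v'|^2 s^{K-1}ds\big)^{1/2}$ embeds compactly into the weighted $L^2$ space with weight $V^{Kp/(K-p)-2}s^{K-1}$. But this is exactly the radial part of \cite[Corollary 6.2]{FN19} — or rather of the $1<p<N$ refinement in \cite{FZ22} — applied in "dimension $K$" (treating $K$ as a real parameter $>p$ rather than an integer, which is legitimate since only the one-dimensional ODE/integral structure is used, as was already observed after \eqref{Ppwhl2ptevp}). So the main step is to quote that result with $N$ replaced by $K$. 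If one wants a self-contained argument, I would instead prove it directly: given a bounded sequence $(v_n)$ in the energy space, extract a weak limit $v$; on any compact subinterval $[\delta, R] \subset (0,\infty)$ the weight $V^{p-2}s^{K-1}$ is bounded above and below, so $(v_n)$ is bounded in $H^1_{loc}$ and converges strongly in $L^2_{loc}$; the weight $V^{Kp/(K-p)-2}s^{K-1}$ decays like $s^{K-1}$ near $0$ (integrable) and like $s^{-(K+1)}$ at $\infty$ after inserting the decay of $V$, so the tails $\int_0^\delta + \int_R^\infty$ are controlled uniformly by a Hardy inequality in the energy norm — giving the required tail smallness and hence strong convergence globally.

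The main obstacle I anticipate is the tail estimate near $s = 0$: since $V(0)$ is finite and positive, the energy weight behaves like $s^{K-1}$ there, so controlling $\int_0^\delta V^{Kp/(K-p)-2} v^2 s^{K-1} ds$ by $\delta^{\text{(something positive)}}\,\|v\|_*^2$ requires a weighted one-dimensional Hardy/Sobolev inequality of the form $\int_0^\delta |v|^{2} s^{K-1} ds \lesssim \delta^{\cdots}\big(\int_0^\delta |v'|^2 s^{K-1} ds + \int_0^\delta |v|^2 s^{K-3}ds\big)$, which needs $K > 2$; when $K \le 2$ — equivalently $p \le 2(N+\alpha)/(N+2+\beta)$ in the original variables, the delicate regime flagged in Remark~\ref{remp2c} — one must argue more carefully, e.g. exploit that $Kp/(K-p) - 2$ may be negative so that $V^{Kp/(K-p)-2}$ is actually bounded below, turning the target norm into an honest weighted $L^2$ norm with a manageable power weight, and then use the explicit form of $V$ together with the condition $p-N<\alpha<p+\beta$ to check the exponents line up. This is also exactly the point where \cite{FZ22} needed their case distinction, so it is reasonable to follow their split; modulo that case analysis, the compactness follows from the Rellich-type argument above. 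Once Proposition~\ref{propcet} is established, it supplies the compactness of the linearized operator needed to run the spectral-gap-plus-contradiction scheme in Theorems~\ref{thmprtp} and \ref{thmprtp2}.
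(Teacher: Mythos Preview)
Your approach is correct and is essentially the same as the paper's: reduce to the one-dimensional problem via the substitution $r=s^{t}$, $t=p/(p+\beta-\alpha)$, identify the two norms with the corresponding ones for the standard bubble $V$ in ``dimension'' $K=p(N+\beta)/(p+\beta-\alpha)$, and then invoke the radial compactness from \cite{FN19}/\cite{FZ22}. Two small remarks: in your first displayed equivalence the weight should be $|V'|^{p-2}$, not $V^{p-2}$; and your worry about the regime $K\le 2$ is not needed for the cited route, since the radial compactness in \cite[Corollary~6.2]{FN19} (extended in \cite{FZ22}) uses only the ODE/integral structure and holds for real $K>p$, exactly as you observe after \eqref{Ppwhl2ptevp}---the paper simply quotes that result and does not carry out the self-contained tail analysis you sketch.
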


    \begin{proof}
    Let us recall that \cite[Proposition 2.2]{PV21} indicates $\mathcal{D}^{1,2}_{0,*}(\mathbb{R}^N)$ continuously embeds into $L^2_{0,*}(\mathbb{R}^N)$, furthermore, \cite[Corollary 6.2]{FN19} indicates $\mathcal{D}^{1,2}_{0,*}(\mathbb{R}^N)$ compactly embeds into $L^2_{0,*}(\mathbb{R}^N)$ for $2\leq p<N$, indeed, it holds also for all $1<p<N$, see \cite[Proposition 3.2]{FZ22}.

    Then let $\{u_n\}\subset \mathcal{D}^{1,2}_{\alpha,*}(\mathbb{R}^N)$ be radial, satisfying $u_n\rightharpoonup u$ in $\mathcal{D}^{1,2}_{\alpha,*}(\mathbb{R}^N)$ for some raidal $u\in\mathcal{D}^{1,2}_{\alpha,*}(\mathbb{R}^N)$, the conclusion follows from $u_n\to u$ in $L^2_{\beta,*}(\mathbb{R}^N)$, that is, we need to prove
    \begin{equation}\label{pevpce}
    \int_{\mathbb{R}^N}|x|^\beta U^{p^*_{\alpha,\beta}-2}(u_n^2-u^2) dx\to 0.
    \end{equation}
    Let $u_n(r)=v_n(s)$, where $r=s^{t}$ with $t=\frac{p}{p+\beta-\alpha}$, then
    \begin{equation*}
    \begin{split}
    \int_{\mathbb{R}^N}|x|^\beta U^{p^*_{\alpha,\beta}-2}(u_n^2-u^2) dx
    = & \omega_{N-1}t\int^\infty_0 V^{\frac{Kp}{K-p}-2}(v^2_n(s)-v^2(s))s^{K-1}ds,
    \end{split}
    \end{equation*}
    where $K=\frac{p(N+\beta)}{p+\beta-\alpha}>p$ and $V(s)=U(r)$.
    From \cite[Corollary 6.2]{FN19} for the radial case, we deduce that
    \begin{equation*}
    \begin{split}
    \int^\infty_0 V^{\frac{Kp}{K-p}-2}(v^2_n(s)-v^2(s))s^{K-1}ds\to 0,
    \end{split}
    \end{equation*}
    thus \eqref{pevpce} holds.
    \end{proof}

    Let us consider the following eigenvalue problem
    \begin{equation}\label{pevp}
    \begin{split}
    & \mathcal{L}_{U} [v]=\mu|x|^{\beta} U^{p^*_{\alpha,\beta}-2}v \quad \mbox{in}\quad \mathbb{R}^N,\quad v\in L^2_{\beta,*}(\mathbb{R}^N),
    \end{split}
    \end{equation}
    where
    \begin{equation*}%\label{defevl}
    \begin{split}
    & \mathcal{L}_{U} [v]:=-{\rm div}(|x|^{\alpha}|\nabla U|^{p-2}\nabla v)-(p-2){\rm div}(|x|^{\alpha}|\nabla U|^{p-4}(\nabla U\cdot\nabla v)\nabla U).
    \end{split}
    \end{equation*}
    Let $\mathcal{M}:=\{cU_{\lambda}: c\in\mathbb{R}, \lambda>0\}$ be set of optimizers for (CKN) inequality (\ref{Ppbcm}). Then according to Theorem \ref{coroPpwhlp} we have

    \begin{theorem}\label{propev}
    %Suppose $2\leq p<N$.
    Let $\mu_i$, $i=1,2,\ldots,$ denote the eigenvalues of (\ref{pevp}) in increasing order. Then $\mu_1=(p-1)$ is simple and the corresponding eigenfunction is $\zeta U$ with $\zeta\in\mathbb{R}$. If
    \begin{equation}\label{npkev}
    \left(\frac{p+\beta-\alpha}{p}\right)^2\left[\frac{p(N+\beta)}{p+\beta-\alpha}-1\right]=k(N-2+k), \quad \mbox{for some}\quad k\in\mathbb{N}^+,
    \end{equation}
    then $\mu_{2}=\mu_{3}=\cdots=\mu_{M_k+2}=p^*_{\alpha,\beta}-1$, where $M_k:=\frac{(N+2k-2)(N+k-3)!}{(N-2)!k!}$, with the corresponding $(1+M_k)$-dimensional eigenfunction space $T_{U} \mathcal{M}$ spanned by
    \[
    \left\{W_0,\quad W_{k,i}, i=1,\ldots, M_k\right\},
    \]
    where $W_0$ and $W_{k,i}$ are given as in (\ref{defwp0i}), and the Rayleigh quotient characterization of eigenvalues implies
    \begin{equation*}
    \mu_{M_k+3}=\inf \left\{
    \frac{\int_{\mathbb{R}^N}\mathcal{L}_{U} [v] v dx}{\int_{\mathbb{R}^N}|x|^{\beta} U^{p^*_{\alpha,\beta}-2}v^2 dx}:\quad v\perp {\rm Span}\{W_0,\quad W_{k,i}, i=1,\ldots, M_k\}
    \right\}>p^*_{\alpha,\beta}-1.
    \end{equation*}
    %thus $p^*_{\alpha,\beta}-1<\mu_{M_k+3}$.
    Otherwise, that is, if (\ref{npkev}) does not hold, then $\mu_2=p^*_{\alpha,\beta}-1$ with the corresponding one-dimensional eigenfunction space $T_{U} \mathcal{M}$ spanned by $W_0$, and $p^*_{\alpha,\beta}-1<\mu_{3}$.
    \end{theorem}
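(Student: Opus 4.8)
The plan is to reduce the eigenvalue problem \eqref{pevp} to the analysis already carried out in Section \ref{sectlp}, and then to add the two ingredients that genuinely go beyond Theorem \ref{coroPpwhlp}: the discreteness/ordering of the spectrum, and the strict inequality $\mu_{M_k+3}>p^*_{\alpha,\beta}-1$ (resp. $\mu_3>p^*_{\alpha,\beta}-1$). First I would set up the variational framework: by Proposition \ref{propcet} the embedding $\mathcal{D}^{1,2}_{\alpha,*}(\mathbb{R}^N)\hookrightarrow\hookrightarrow L^2_{\beta,*}(\mathbb{R}^N)$ is compact (in the radial class, which is all we need here since the later spherical decomposition splits off the radial part), and the bilinear form $v\mapsto\int_{\mathbb{R}^N}\mathcal{L}_U[v]v\,dx=\int_{\mathbb{R}^N}|x|^\alpha|\nabla U|^{p-2}\bigl(|\nabla v|^2+(p-2)|\nabla U|^{-2}(\nabla U\cdot\nabla v)^2\bigr)\,dx$ is, for $p\ge 2$, comparable to $\|v\|_{\mathcal{D}^{1,2}_{\alpha,*}}^2$, and for $1<p<2$ still coercive since the integrand is $\ge(p-1)|x|^\alpha|\nabla U|^{p-2}|\nabla v|^2\ge 0$. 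Hence the spectral theorem for compact self-adjoint operators applies and \eqref{pevp} has a discrete sequence $0<\mu_1\le\mu_2\le\cdots\to\infty$ of eigenvalues with finite multiplicities and eigenfunctions forming an orthonormal basis of $L^2_{\beta,*}(\mathbb{R}^N)$; the Rayleigh quotient characterization (Courant--Fischer) then holds.

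Next I would identify $\mu_1$ and $\mu_2$ (resp. $\mu_2,\dots,\mu_{M_k+2}$) by direct inspection. For $\mu_1$: testing with $U$ itself, note that differentiating \eqref{Ppwh} in the scaling parameter, or simply using that $U$ solves \eqref{Ppwh}, gives $\mathcal{L}_U[U]=(p-1)|x|^\beta U^{p^*_{\alpha,\beta}-1}$, i.e. $U$ is an eigenfunction with eigenvalue $p-1$; since $U>0$ does not change sign, $p-1$ is the lowest eigenvalue and it is simple, with eigenspace $\mathbb{R}U$. For the next eigenvalue: after the change of variables $r\mapsto r^{p/(p+\beta-\alpha)}$ and the spherical harmonic decomposition of Section \ref{sectlp}, the equation $\mathcal{L}_U[v]=\mu|x|^\beta U^{p^*_{\alpha,\beta}-2}v$ decouples into the ODEs \eqref{Ppwhl2p2tep} with $\mu$ replaced by $t^2\lambda_k+(\text{shift})$; the analysis there (via the known linearized spectrum of the $p$-Laplace Sobolev problem in dimension $K$, equation \eqref{Ppwhl2ptevp}) shows the only admissible values in $\widetilde{\mathcal{W}}$ correspond to $\mu_0=0$ on mode $k=0$ and $\mu_1=K-1$ on a mode $k$ with $t^2\lambda_k=K-1$, yielding eigenfunctions $W_0$ (mode $0$) and, when \eqref{npkev} holds, $W_{k,i}$ (mode $k$), all with eigenvalue exactly $p^*_{\alpha,\beta}-1$ (using $p^*_{\alpha,\beta}-1=K/(K-p)$ and the normalizations already computed). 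Counting multiplicities gives the dimensions $1$ or $1+M_k$ exactly as in Theorem \ref{coroPpwhlp}; these are all the eigenfunctions below the next radial eigenvalue because the ODE \eqref{Ppwhl2p2tep} has, for each fixed $k$, a discrete increasing sequence of admissible $\mu$'s with $\mu_0<\mu_1<\mu_2<\cdots$, and $\mu_2$ (the next one on the $k=0$ mode) is strictly larger than $K-1$.

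The main obstacle — the one step requiring real work rather than bookkeeping — is the strict inequality for the last displayed eigenvalue, namely that every eigenvalue of \eqref{pevp} other than those already listed is strictly greater than $p^*_{\alpha,\beta}-1$, equivalently that the ``second radial eigenvalue'' $\mu_2^{(0)}$ of the ODE \eqref{Ppwhl2p2tep} on the mode $k=0$ satisfies $t^2\mu_2^{(0)}>K-1$, and that no higher spherical mode $k'$ can contribute an eigenvalue $\le p^*_{\alpha,\beta}-1$. The latter is monotonicity: since $\lambda_{k'}$ is increasing in $k'$ and the eigenvalues of \eqref{Ppwhl2p2tep} are increasing in the coefficient $t^2\lambda_k/(p-1)$ of the $s^{-2}$-term (by the min–max principle, the Rayleigh quotient is pointwise increasing in that coefficient), any admissible $\mu$ on a mode $k'$ with $\lambda_{k'}>\lambda_k$ (the one producing $W_{k,i}$, or $\lambda_1$ in the degenerate case — whichever is relevant) must exceed $p^*_{\alpha,\beta}-1$; combined with the fact that on mode $0$ the eigenvalue just after $0$ is $K-1$ and the one after that is strictly larger (again from the explicit linearized spectrum of the radial $p$-Laplacian, where $\mu_0=0<\mu_1=K-1<\mu_2$), this closes the argument. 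The strict inequality $\mu_2>K-1$ on the radial mode can be extracted either from \cite[Proposition 3.1]{FN19}/\cite{FZ22} (the full linearized spectrum in the integer-$K$ case, which transfers to non-integer $K$ by the same ODE analysis used in Section \ref{sectlp}) or, if one prefers a self-contained route, from a Sturm-oscillation / nodal-count argument: the eigenfunction $\eta_0$ in \eqref{Ppwhl2ptevp} for $\mu=K-1$ on mode $0$ already has one sign change, so any further radial eigenvalue corresponds to an eigenfunction with at least two nodes and hence a strictly larger Rayleigh quotient. Finally, one records that the eigenspace for $p^*_{\alpha,\beta}-1$ is precisely $T_U\mathcal{M}$ by recognizing $W_0=\partial_\lambda U_\lambda|_{\lambda=1}$ (up to a constant) and the $W_{k,i}$ as the new directions described in Remark \ref{remmr}, and that orthogonality of the Rayleigh quotient test functions to $\mathrm{Span}\{W_0,W_{k,i}\}$ forces the infimum onto the strictly larger part of the spectrum, giving the stated strict inequality. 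This completes the proof. \qed
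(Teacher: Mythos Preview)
Your proposal is correct and follows essentially the same route as the paper: the paper offers no proof of Theorem \ref{propev} beyond the phrase ``according to Theorem \ref{coroPpwhlp} we have,'' so your argument---setting up the compact self-adjoint framework via Proposition \ref{propcet}, identifying $\mu_1=p-1$ by testing with $U$, reading off the $(p^*_{\alpha,\beta}-1)$-eigenspace from the spherical decomposition and change of variables of Section \ref{sectlp}, and closing the strict gap via monotonicity in the spherical-harmonic index together with the radial Sturm/nodal count---is exactly the elaboration the paper leaves to the reader. One small remark: your parenthetical identity ``$p^*_{\alpha,\beta}-1=K/(K-p)$'' is not quite right as written (rather $p^*_{\alpha,\beta}=Kp/(K-p)$), and the notation $T_U\mathcal{M}$ in the statement is simply \emph{defined} as that eigenspace rather than as the literal two-dimensional tangent space to $\mathcal{M}$, so no separate identification is needed there; neither point affects the argument.
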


    From Theorem \ref{propev}, we directly obtain
    \begin{proposition}\label{propevl}
    Let $1< p<N$. There exists a constant $\tau=\tau(N,p,\alpha,\beta)>0$ such that for any function $v\in L^2_{\beta,*}(\mathbb{R}^N)$ orthogonal to $T_{U} \mathcal{M}$, it holds that
    \begin{equation*}%\label{uinb2p}
    \begin{split}
    & \int_{\mathbb{R}^N}|x|^{\alpha}\left[|\nabla U|^{p-2}|\nabla v|^2+(p-2)|\nabla U|^{p-4}|\nabla U\cdot\nabla v|^2\right]dx \\
    \geq & \left[(p^*_{\alpha,\beta}-1)+2\tau\right]
    \int_{\mathbb{R}^N}|x|^{\beta}U^{p^*_{\alpha,\beta}-2}|v|^2dx.
    \end{split}
    \end{equation*}
    \end{proposition}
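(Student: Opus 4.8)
The plan is to derive Proposition \ref{propevl} as a direct consequence of the spectral picture established in Theorem \ref{propev}, using the compact embedding from Proposition \ref{propcet} to guarantee that the infimum defining the next eigenvalue is actually attained and is strictly larger than $p^*_{\alpha,\beta}-1$. First I would recall that, since $U$ solves the weighted $p$-Laplace equation \eqref{Ppwh}, the quadratic form
\[
Q(v):=\int_{\mathbb{R}^N}|x|^{\alpha}\left[|\nabla U|^{p-2}|\nabla v|^2+(p-2)|\nabla U|^{p-4}|\nabla U\cdot\nabla v|^2\right]dx
\]
is exactly $\int_{\mathbb{R}^N}\mathcal{L}_U[v]\,v\,dx$ after integration by parts, so that the Rayleigh quotient appearing in \eqref{pevp} is $Q(v)\big/\int_{\mathbb{R}^N}|x|^{\beta}U^{p^*_{\alpha,\beta}-2}v^2dx$. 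By Proposition \ref{propcet} the embedding $\mathcal{D}^{1,2}_{\alpha,*}(\mathbb{R}^N)\hookrightarrow\hookrightarrow L^2_{\beta,*}(\mathbb{R}^N)$ is compact (in the radial class, and in fact in general as quoted from \cite{FZ22}), so the operator associated to $Q$ has discrete spectrum $\mu_1\le\mu_2\le\cdots\to\infty$ with an $L^2_{\beta,*}$-orthonormal basis of eigenfunctions; this is the content of Theorem \ref{propev}, which moreover identifies $\mu_1=p-1$ (eigenfunction $U$) and the eigenspace at level $p^*_{\alpha,\beta}-1$ as $T_U\mathcal{M}=\mathrm{Span}\{W_0\}$ or $\mathrm{Span}\{W_0,W_{k,i}\}$ according to whether \eqref{npkev} holds.

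Next I would observe that $U$ itself lies in $T_U\mathcal{M}^{\perp}$ in the relevant sense: actually $U=W_0|_{\text{rescaled}}$ up to a constant only in special cases, so more carefully, the eigenfunction $\zeta U$ of $\mu_1$ and the eigenfunctions spanning $T_U\mathcal M$ (eigenvalue $p^*_{\alpha,\beta}-1$) are mutually $L^2_{\beta,*}$-orthogonal by standard Sturm–Liouville/self-adjointness, since they belong to different eigenvalues when $p-1\ne p^*_{\alpha,\beta}-1$, i.e. $p\ne p^*_{\alpha,\beta}$, which holds because $p<N$ forces $p^*_{\alpha,\beta}>p$. Hence for $v\perp T_U\mathcal M$, the Rayleigh quotient of $v$ is bounded below by $\mu_{M_k+3}$ (resp. $\mu_3$) which, by the strict inequality in Theorem \ref{propev}, exceeds $p^*_{\alpha,\beta}-1$. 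Setting
\[
2\tau:=\min\{\mu_{M_k+3},\mu_3\}-(p^*_{\alpha,\beta}-1)>0
\]
in the two cases respectively (more precisely $2\tau:=\mu_{M_k+3}-(p^*_{\alpha,\beta}-1)$ when \eqref{npkev} holds and $2\tau:=\mu_3-(p^*_{\alpha,\beta}-1)$ otherwise), and noting $\tau$ depends only on $N,p,\alpha,\beta$, gives precisely
\[
Q(v)\ge\bigl[(p^*_{\alpha,\beta}-1)+2\tau\bigr]\int_{\mathbb{R}^N}|x|^{\beta}U^{p^*_{\alpha,\beta}-2}|v|^2dx,
\]
which is the claimed inequality.

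The one point that requires genuine care — and which I expect to be the main obstacle — is the strict inequality $\mu_{M_k+3}>p^*_{\alpha,\beta}-1$ (and $\mu_3>p^*_{\alpha,\beta}-1$), since everything else is soft functional analysis. This is exactly where Theorem \ref{coroPpwhlp}/\ref{propev} is used: the classification of solutions to the linearized problem \eqref{Ppwhl} shows that the eigenvalue $p^*_{\alpha,\beta}-1$ has eigenspace of dimension exactly $1+M_k$ (resp. $1$), so no eigenfunction beyond those listed can have eigenvalue $\le p^*_{\alpha,\beta}-1$; combined with discreteness of the spectrum (from compactness), the next eigenvalue is strictly above $p^*_{\alpha,\beta}-1$. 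I would also verify the harmless technical point that $v\in L^2_{\beta,*}(\mathbb{R}^N)$ with $Q(v)<\infty$ automatically lies in $\mathcal{D}^{1,2}_{\alpha,*}(\mathbb{R}^N)$ — indeed $Q(v)\ge\min\{1,p-1\}\|v\|_{\mathcal{D}^{1,2}_{\alpha,*}}^2$ when $p\ge2$ and $Q(v)\ge (p-1)\|v\|_{\mathcal{D}^{1,2}_{\alpha,*}}^2$ when $1<p<2$, using $|\nabla U\cdot\nabla v|^2\le|\nabla U|^2|\nabla v|^2$ — so the Rayleigh quotient is finite and the variational characterization in Theorem \ref{propev} applies verbatim.
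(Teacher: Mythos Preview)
Your approach is essentially the same as the paper's, which simply states that Proposition \ref{propevl} follows directly from Theorem \ref{propev}. The added discussion of compactness, discreteness of the spectrum, and the bound $Q(v)\ge(p-1)\|v\|_{\mathcal{D}^{1,2}_{\alpha,*}}^2$ is all correct and useful.

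There is, however, a genuine confusion in your handling of $U$ relative to $T_U\mathcal{M}$. You write that ``$U$ itself lies in $T_U\mathcal{M}^{\perp}$ in the relevant sense'', and then argue that the $\mu_1$-eigenfunction $U$ is $L^2_{\beta,*}$-orthogonal to the $\mu_2$-eigenspace $T_U\mathcal{M}$. But from $v\perp T_U\mathcal{M}$ understood in that sense (orthogonal only to the $\mu_2$-eigenspace $\{W_0,W_{k,i}\}$) you \emph{cannot} conclude that the Rayleigh quotient of $v$ is at least $\mu_{M_k+3}$: nothing prevents $v$ from carrying a nonzero component along $U$, whose eigenvalue is $\mu_1=p-1<p^*_{\alpha,\beta}-1$. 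The correct reading is that $U\in T_U\mathcal{M}$, since $\mathcal{M}=\{cU_\lambda:c\in\mathbb{R},\lambda>0\}$ and $\partial_c(cU_\lambda)\big|_{c=1,\lambda=1}=U$; this is exactly how the tangent space is written out and used later in Lemmas \ref{lemma:rtnm2b} and \ref{lemma:rtnm2b2}, where $T_{c_nU_{\lambda_n}}\mathcal{M}$ explicitly contains $U_{\lambda_n}$. The hypothesis $v\perp T_U\mathcal{M}$ therefore already includes $v\perp U$, and the variational characterization then gives the bound by the eigenvalue after $p^*_{\alpha,\beta}-1$. The paper's own statement of Theorem \ref{propev} is admittedly imprecise on this point (it labels the $\mu_2$-eigenspace alone as $T_U\mathcal{M}$), which likely caused your confusion, but the fix is simply to include $U$ in $T_U\mathcal{M}$ rather than to argue it lies in the orthogonal complement.
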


    Following \cite{FZ22}, we give the following remark which will  be important to give a meaning to the notion of  ``orthogonal to $T_{U} \mathcal{M}$" for functions which are not necessarily in $L^2_{\beta,*}(\mathbb{R}^N)$.

    \begin{remark}\rm
    For any $\xi\in T_{U} \mathcal{M}$ it holds $U^{p^*_{\alpha,\beta}-2}\xi \in L_\beta^{\frac{p^*_{\alpha,\beta}}{p^*_{\alpha,\beta}-1}}(\mathbb{R}^N)
    =\left(L_\beta^{p^*_{\alpha,\beta}}(\mathbb{R}^N)\right)'$, here $L_\beta^{q}(\mathbb{R}^N)$ is the set of measurable functions with the norm $\|\varphi\|_{L_\beta^{q}(\mathbb{R}^N)}:
    =\left(\int_{\mathbb{R}^N}|x|^{\beta}|\varphi|^{q} dx\right)^{\frac{1}{q}}$.
    Hence, by abuse of notation, for any function $v\in L_\beta^{p^*_{\alpha,\beta}}(\mathbb{R}^N)$ we say that $v$ is orthogonal to $T_{U} \mathcal{M}$ in $L^2_{\beta,*}(\mathbb{R}^N)$ if \[
    \int_{\mathbb{R}^N}|x|^{\beta}U^{p^*_{\alpha,\beta}-2}\xi vdx=0,\quad \forall \xi \in T_{U} \mathcal{M}.
    \]
    \end{remark}

    Note that, by H\"{o}lder inequality, $L_\beta^{p^*_{\alpha,\beta}}(\mathbb{R}^N)\subset L^2_{\beta,*}(\mathbb{R}^N)$ if $p^*_{\alpha,\beta}\geq 2$.
    Hence, the notion of orthogonality introduced above is particularly relevant when $p^*_{\alpha,\beta}<2$ (equivalently, $p<\frac{2(N+\beta)}{N+2+\alpha}$). We also observe that, by Sobolev embedding, the previous remark gives a meaning to the orthogonality to $T_{U} \mathcal{M}$ for functions in $\mathcal{D}^{1,p}_{\alpha,r}(\mathbb{R}^N)$ since Theorem \ref{Ppbcm} indicates  $\mathcal{D}^{1,p}_{\alpha,r}(\mathbb{R}^N)\hookrightarrow L_\beta^{p^*_{\alpha,\beta}}(\mathbb{R}^N)$ continuously.

    Then we split this section into two subsections in order to prove Theorem \ref{thmprtp} and Theorem \ref{thmprtp2}, respectively.

    \subsection{The case $2\leq p<N$.}\label{subsectpb2}
    \

    Firstly, we are going to give the following spectral gap-type estimate.
    \begin{lemma}\label{lemsgap}
    Let $2\leq p<N$. Given any $\gamma_0>0$, there exists $\overline{\delta}=\overline{\delta}(N,p,\alpha,\beta,\gamma_0)>0$ such that for any function $v\in \mathcal{D}^{1,p}_{\alpha,r}(\mathbb{R}^N)$ orthogonal to $T_{U} \mathcal{M}$ in $L^2_{\beta,*}(\mathbb{R}^N)$ satisfying $\|v\|\leq \overline{\delta}$, we have
    \begin{equation*}
    \begin{split}
    & \int_{\mathbb{R}^N}|x|^{\alpha}\left[|\nabla U|^{p-2}|\nabla v|^2+(p-2)|\omega|^{p-2}(|\nabla (U+v)|-|\nabla U|)^2\right]dx \\
    \geq &\left[(p^*_{\alpha,\beta}-1)+\tau\right]
    \int_{\mathbb{R}^N}|x|^{\beta}U^{p^*_{\alpha,\beta}-2}|v|^2dx,
    \end{split}
    \end{equation*}
    where $\tau>0$ is given in Proposition \ref{propevl}, and $\omega: \mathbb{R}^{2N}\to \mathbb{R}^N$ is defined in analogy to Lemma \ref{lemui1p}:
    \begin{eqnarray*}
    \omega=\omega(\nabla U,\nabla (U+v))=
    \left\{ \arraycolsep=1.5pt
       \begin{array}{ll}
        \nabla U,\ \ &{\rm if}\ \ |\nabla U|<|\nabla (U+v)|,\\[3mm]
        \left(\frac{|\nabla (U+v)|}{|\nabla U|}\right)^{\frac{1}{p-2}}\nabla (U+v),\ \ &{\rm if}\ \  |\nabla (U+v)|\leq |\nabla U|.
        \end{array}
    \right.
    \end{eqnarray*}
    \end{lemma}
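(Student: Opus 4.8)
The plan is to argue by contradiction. Suppose the claimed spectral gap inequality fails for some fixed $\gamma_0>0$. Then there is a sequence of radial functions $v_n\in\mathcal{D}^{1,p}_{\alpha,r}(\mathbb{R}^N)$, each orthogonal to $T_U\mathcal{M}$ in $L^2_{\beta,*}(\mathbb{R}^N)$, with $\|v_n\|\to 0$, such that
\[
\int_{\mathbb{R}^N}|x|^{\alpha}\left[|\nabla U|^{p-2}|\nabla v_n|^2+(p-2)|\omega_n|^{p-2}(|\nabla(U+v_n)|-|\nabla U|)^2\right]dx
<\left[(p^*_{\alpha,\beta}-1)+\tau\right]\int_{\mathbb{R}^N}|x|^{\beta}U^{p^*_{\alpha,\beta}-2}|v_n|^2dx,
\]
where $\omega_n=\omega(\nabla U,\nabla(U+v_n))$. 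Normalizing, set $w_n:=v_n/\|v_n\|_{\mathcal{D}^{1,2}_{\alpha,*}}$ — but to make this work I first need to know that $\|v_n\|_{\mathcal{D}^{1,2}_{\alpha,*}}$ is comparable to a quantity controlled by $\|v_n\|$. Since $2\le p<N$, the key observation (mentioned in Remark \ref{remp2c}) is that $|x|^\alpha|\nabla U|^{p-2}|\nabla v|^2\le |x|^\alpha|\nabla U|^{p-2}(\cdots)$ and, more usefully, the $\mathcal{D}^{1,p}_\alpha$ norm dominates the weighted $\mathcal{D}^{1,2}_{\alpha,*}$ norm on bounded sets: by Hölder with exponents $\frac{p}{2}$ and $\frac{p}{p-2}$,
\[
\int_{\mathbb{R}^N}|x|^\alpha|\nabla U|^{p-2}|\nabla v|^2dx\le \left(\int_{\mathbb{R}^N}|x|^\alpha|\nabla v|^pdx\right)^{2/p}\left(\int_{\mathbb{R}^N}|x|^\alpha|\nabla U|^pdx\right)^{(p-2)/p},
\]
so $\|v_n\|_{\mathcal{D}^{1,2}_{\alpha,*}}\lesssim \|v_n\|\to 0$ as well.

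The main work is to extract a limit and identify it. Normalize $w_n:=v_n/\|v_n\|_{\mathcal{D}^{1,2}_{\alpha,*}}$, so $\|w_n\|_{\mathcal{D}^{1,2}_{\alpha,*}}=1$; up to a subsequence $w_n\rightharpoonup w$ weakly in $\mathcal{D}^{1,2}_{\alpha,*}(\mathbb{R}^N)$, and by Proposition \ref{propcet} (compact embedding, radial case) $w_n\to w$ strongly in $L^2_{\beta,*}(\mathbb{R}^N)$. The orthogonality to $T_U\mathcal{M}$ passes to the limit, so $w\perp T_U\mathcal{M}$. The heart of the matter is the behavior of the second term: as $\|v_n\|\to 0$, the ratio $|\nabla(U+v_n)|/|\nabla U|\to 1$ pointwise a.e. (along a further subsequence), hence $\omega_n\to\nabla U$, and the quadratic remainder $|\omega_n|^{p-2}(|\nabla(U+v_n)|-|\nabla U|)^2$ is, to leading order, $|\nabla U|^{p-2}\big(\tfrac{\nabla U\cdot\nabla v_n}{|\nabla U|}\big)^2=|\nabla U|^{p-4}|\nabla U\cdot\nabla v_n|^2$. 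I would make this rigorous by a Taylor expansion of the map $\xi\mapsto|\xi|^p$ at $\nabla U$ (valid since $p\ge 2$, $\nabla U\neq 0$ a.e.), writing
\[
|\omega_n|^{p-2}(|\nabla(U+v_n)|-|\nabla U|)^2=|\nabla U|^{p-4}|\nabla U\cdot\nabla v_n|^2+o(|\nabla v_n|^2)
\]
in an integrated sense, controlled uniformly by $|\nabla U|^{p-2}|\nabla v_n|^2$ (so that one can divide by $\|v_n\|_{\mathcal{D}^{1,2}_{\alpha,*}}^2$ and pass to the limit). After dividing the contradiction inequality by $\|v_n\|_{\mathcal{D}^{1,2}_{\alpha,*}}^2$ and using weak lower semicontinuity of the left side together with strong $L^2_{\beta,*}$ convergence on the right, one obtains
\[
\int_{\mathbb{R}^N}|x|^{\alpha}\left[|\nabla U|^{p-2}|\nabla w|^2+(p-2)|\nabla U|^{p-4}|\nabla U\cdot\nabla w|^2\right]dx\le\left[(p^*_{\alpha,\beta}-1)+\tau\right]\int_{\mathbb{R}^N}|x|^{\beta}U^{p^*_{\alpha,\beta}-2}|w|^2dx.
\]
Since $w\perp T_U\mathcal{M}$, Proposition \ref{propevl} forces $w=0$, hence $\|w_n\|_{\mathcal{D}^{1,2}_{\alpha,*}}\to 0$ — but also the right-hand side $\int|x|^\beta U^{p^*_{\alpha,\beta}-2}|w_n|^2\to 0$. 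Feeding this back, the left side of the normalized contradiction inequality is at least (roughly) $1-o(1)$ by positivity of the second term (using $|\nabla U|^{p-2}|\nabla w_n|^2+(p-2)|\omega_n|^{p-2}(\cdots)^2\ge c|\nabla U|^{p-2}|\nabla w_n|^2$ for $p\ge 2$, which holds because the bracket is a sum of nonnegative terms), while the right side is $o(1)$, a contradiction.

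The step I expect to be the main obstacle is the rigorous treatment of the remainder term $|\omega_n|^{p-2}(|\nabla(U+v_n)|-|\nabla U|)^2$: one must show that, after dividing by $\|v_n\|_{\mathcal{D}^{1,2}_{\alpha,*}}^2$, it converges (in the liminf sense) to $\int|x|^\alpha|\nabla U|^{p-2}\big|\nabla U\cdot\nabla w/|\nabla U|\big|^2$. This requires a uniform domination $|\omega_n|^{p-2}(|\nabla(U+v_n)|-|\nabla U|)^2\le C|\nabla U|^{p-2}|\nabla v_n|^2$ valid for $p\ge 2$ (which follows from the elementary inequality $||\xi+\eta|-|\xi||\le|\eta|$ together with $|\omega_n|\le\max\{|\nabla U|,|\nabla(U+v_n)|\}$ and a careful case split as in Lemma \ref{lemui1p}), plus pointwise convergence $\omega_n\to\nabla U$ a.e. and a Taylor estimate with remainder $o(1)\cdot|\nabla U|^{p-2}|\nabla v_n|^2$ where $o(1)\to 0$ on the set where $|\nabla v_n|\le\epsilon|\nabla U|$, handled by a Vitali-type argument. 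Once this convergence is in hand, combining lower semicontinuity on the left with the compact embedding on the right and invoking Proposition \ref{propevl} closes the argument cleanly.
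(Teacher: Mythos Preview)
Your proposal is correct and follows essentially the same contradiction scheme as the paper: normalize by the $\mathcal{D}^{1,2}_{\alpha,*}$ norm, pass to a weak limit using the compact embedding of Proposition~\ref{propcet}, and contradict Proposition~\ref{propevl}. Two small remarks. First, you wrote ``hence $\|w_n\|_{\mathcal{D}^{1,2}_{\alpha,*}}\to 0$'', which is a slip (this norm equals $1$ by construction); you mean $\|w_n\|_{L^2_{\beta,*}}\to 0$, and then your concluding contradiction is fine. In fact the paper short-circuits this last step: since for $p\ge 2$ the second bracketed term is nonnegative, the normalized contradiction inequality immediately gives $\|\widehat v_i\|_{L^2_{\beta,*}}^2\ge c>0$, so the limit $\widehat v$ is nonzero from the start and the limiting inequality contradicts Proposition~\ref{propevl} directly. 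Second, for the ``main obstacle'' you flag --- the remainder term --- the paper's concrete device is worth noting: rather than Taylor-expanding, it writes
\[
|\nabla(U+v_i)|-|\nabla U|=\Bigl[\int_0^1\frac{\nabla U+t\nabla v_i}{|\nabla U+t\nabla v_i|}\,dt\Bigr]\cdot\nabla v_i,
\]
splits the annulus $B(0,R)\setminus B(0,1/R)$ into $\mathcal R_{i,R}=\{2|\nabla U|\ge|\nabla v_i|\}$ and its complement $\mathcal S_{i,R}$ (with $|\mathcal S_{i,R}|\to 0$), and then decomposes $\widehat v_i=\widehat v+\varphi_i$ so that the cross terms vanish by weak convergence while the diagonal terms converge by dominated convergence. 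This is exactly a rigorous implementation of the ``Taylor plus Vitali'' picture you sketched.
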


    \begin{proof}
    If the statement of this lemma fails, then there exists a sequence $0\not\equiv v_i\to 0$ in $\mathcal{D}^{1,p}_{\alpha,r}(\mathbb{R}^N)$, with $v_i$ orthogonal to $T_{U} \mathcal{M}$, such that
    \begin{align}\label{evbc}
    & \int_{\mathbb{R}^N}|x|^{\alpha}\left[|\nabla U|^{p-2}|\nabla v_i|^2+(p-2)|\omega_i|^{p-2}(|\nabla (U+v_i)|-|\nabla U|)^2\right]dx \nonumber\\
    < & \left[(p^*_{\alpha,\beta}-1)+\tau\right]
    \int_{\mathbb{R}^N}|x|^{\beta}U^{p^*_{\alpha,\beta}-2}|v_i|^2dx,
    \end{align}
    where $\omega_i$ corresponds to $v_i$ as in the statement.
    Let
    \[
    \varepsilon_i:=\|v_i\|_{\mathcal{D}^{1,2}_{\alpha,*}(\mathbb{R}^N)}
    =\left(\int_{\mathbb{R}^N}|x|^\alpha|\nabla U|^{p-2} |\nabla v_i|^2  dx\right)^{\frac{1}{2}},\quad \widehat{v}_i=\frac{v_i}{\varepsilon_i}.
    \]
    Note that, since $p\geq 2$, it follows by H\"{o}lder inequality that
    \[
    \int_{\mathbb{R}^N}|x|^{\alpha}|\nabla U|^{p-2}|\nabla v_i|^2dx
    \leq \left(\int_{\mathbb{R}^N}|x|^{\alpha}|\nabla U|^{p}dx\right)^{1-\frac{p}{2}}
    \left(\int_{\mathbb{R}^N}|x|^{\alpha}|\nabla v_i|^{p}dx\right)^{\frac{p}{2}}
    \to 0,
    \]
    hence $\varepsilon_i\to 0$, as $i\to \infty$.
    Since $\|\widehat{v}_i\|_{\mathcal{D}^{1,2}_{\alpha,*}(\mathbb{R}^N)}=1$, Proposition \ref{propcet} implies that, up to a subsequence, $\widehat{v}_i\rightharpoonup \widehat{v}$ in $\mathcal{D}^{1,2}_{\alpha,*}(\mathbb{R}^N)$ and $\widehat{v}_i \rightarrow\widehat{v}$ in $L^2_{\beta,*}(\mathbb{R}^N)$ for some $\widehat{v}\in \mathcal{D}^{1,2}_{\alpha,*}(\mathbb{R}^N)$. Also, since $p\geq 2$, it follows from \eqref{evbc} that
    \[
    1=\int_{\mathbb{R}^N}|x|^{\alpha} |\nabla U|^{p-2}|\nabla \widehat{v}_i|^2 dx \leq  \left[(p^*_{\alpha,\beta}-1)+\tau\right]
    \int_{\mathbb{R}^N}|x|^{\beta}U^{p^*_{\alpha,\beta}-2}
    |\widehat{v}_i|^2dx,
    \]
    then we deduce that
    \[
    \|\widehat{v}_i\|_{L^2_{\beta,*}(\mathbb{R}^N)}
    =\int_{\mathbb{R}^N}|x|^{\beta}U^{p^*_{\alpha,\beta}-2}
    |\widehat{v}_i|^2dx\geq c
    \]
    for some $c>0$.

    Fix $R>1$ which can be chosen arbitrarily large, set
    \begin{equation*}
    \begin{split}
    \mathcal{R}_i:=\{2|\nabla U|\geq |\nabla v_i|\}&,\quad \mathcal{S}_i:=\{2|\nabla U|< |\nabla v_i|\},  \\
    \mathcal{R}_{i,R}:=\left(B(0,R)\backslash B(0,1/R)\right)\cap \mathcal{R}_i&,\quad
    \mathcal{S}_{i,R}:=\left(B(0,R)\backslash B(0,1/R)\right)\cap \mathcal{S}_i,
    \end{split}
    \end{equation*}
    thus $B(0,R)\backslash B(0,1/R)=\mathcal{R}_{i,R} \cup \mathcal{S}_{i,R}$.
    Since the integrand in the left hand side of \eqref{evbc} is nonnegative, we have
    \begin{align}\label{evbcb}
    & \int_{B(0,R)\backslash B(0,1/R)}|x|^{\alpha}\left[|\nabla U|^{p-2}|\nabla \widehat{v}_i|^2+(p-2)|\omega_i|^{p-2}\left(\frac{|\nabla (U+v_i)|-|\nabla U|}{\varepsilon_i}\right)^2\right]dx \nonumber\\
    < & \left[(p^*_{\alpha,\beta}-1)+\tau\right]
    \int_{\mathbb{R}^N}|x|^{\beta}U^{p^*_{\alpha,\beta}-2}|\widehat{v}_i|^2dx.
    \end{align}
    From Proposition \ref{propcet} the continuous embedding theorem, we have
    \begin{align*}%\label{evbcb2l2}
    c\leq \int_{\mathbb{R}^N}|x|^{\beta}
    U^{p^*_{\alpha,\beta}-2}|\widehat{v}_i|^2dx
    \leq C_1 \int_{\mathbb{R}^N}|x|^\alpha|\nabla U|^{p-2} |\nabla \widehat{v}_i|^2  dx=C_1,
    \end{align*}
    thus
    \begin{align*}%\label{evbcb2l2b}
    & \int_{\mathcal{R}_{i,R}}|x|^\alpha\left| \nabla  U \right|^{p-2}|\nabla \widehat{v}_i|^2 dx
    +\varepsilon^{p-2}_i\int_{\mathcal{S}_{i,R}}|x|^\alpha|\nabla \widehat{v}_i|^p dx \leq C_2.
    \end{align*}
    Then we obtain
    \[
    \varepsilon_i^{-2}\int_{\mathcal{S}_{i,R}}|x|^{\alpha} |\nabla U|^{p}dx
    \leq \frac{\varepsilon_i^{p-2}}{2^p}\int_{\mathcal{S}_{i,R}}|x|^{\alpha} |\nabla \widehat{v}_i|^{p}dx
    \leq C_3,
    \]
    and since
    \[
    0<c(R)\leq |\nabla U|\leq C(R)\quad \mbox{inside}\quad B(0,R)\backslash B(0,1/R),\quad \forall R>1,
    \]
    for some constants $c(R)\leq C(R)$ depending only on $R$, we conclude that
    \begin{align}\label{pb2csirt0}
    |\mathcal{S}_{i,R}|\to 0\quad \mbox{as}\quad i\to \infty,\quad \forall R>1.
    \end{align}
    Now, writing
    \[
    \widehat{v}_i=\widehat{v}+\varphi_i,\quad\mbox{with}\quad \varphi_i:= \widehat{v}_i-\widehat{v},
    \]
    since $R>1$ is arbitrary, we have
    \[
    \varphi_i \rightharpoonup 0 \quad \mbox{locally in}\quad   \mathcal{D}^{1,2}_{\alpha,*}(\mathbb{R}^N\backslash\{0\}).\]
    Moreover, we have $|\omega_i|\to |\nabla U|$ a.e. in $\mathbb{R}^N$. Then, let us rewrite
    \begin{equation*}
    \begin{split}
    \left(\frac{|\nabla (U+v_i)|-|\nabla U|}{\varepsilon_i}\right)^2
    = & \left(\left[\int^1_0\frac{\nabla U+ t\nabla v_i}{|\nabla U+ t\nabla v_i|}dt\right]\cdot \nabla \widehat{v}_i\right)^2 \\
    = & \left(\left[\int^1_0\frac{\nabla U+ t\nabla v_i}{|\nabla U+ t\nabla v_i|}dt\right]\cdot \nabla (\widehat{v}+\varphi_i)\right)^2.
    \end{split}
    \end{equation*}
    Hence, if we set
    \[
    f_{i,1}=\left[\int^1_0\frac{\nabla U+ t\nabla v_i}{|\nabla U+ t\nabla v_i|}dt\right]\cdot \nabla \widehat{v},\quad
    f_{i,2}=\left[\int^1_0\frac{\nabla U+ t\nabla v_i}{|\nabla U+ t\nabla v_i|}dt\right]\cdot \nabla \varphi_i,
    \]
    since $\frac{\nabla U+ t\nabla v_i}{|\nabla U+ t\nabla v_i|}\to \frac{\nabla U}{|\nabla U|}$ a.e., it follows from Lebesgue's dominated convergence theorem that
    \[
    f_{i,1}\to \frac{\nabla U}{|\nabla U|}\cdot\nabla \widehat{v}\quad \mbox{locally in}\quad L^2(\mathbb{R}^N\backslash\{0\}),\quad f_{i,2}\chi_{\mathcal{R}_i}\rightharpoonup 0\quad \mbox{locally in}\quad L^2(\mathbb{R}^N\backslash\{0\}).
    \]
    Thus, the left hand side of \eqref{evbcb} from below as follows:
    \begin{align}\label{evbcbb}
    & \int_{\mathcal{R}_{i,R}}|x|^{\alpha}\left[|\nabla U|^{p-2}|\nabla \widehat{v}_i|^2+(p-2)|\omega_i|^{p-2}\left(\frac{|\nabla (U+v_i)|-|\nabla U|}{\varepsilon_i}\right)^2\right]dx \nonumber\\
    = & \int_{\mathcal{R}_{i,R}}|x|^{\alpha}\left[|\nabla U|^{p-2}\left(|\nabla \widehat{v}|^2+2 \nabla \varphi_i\cdot \nabla \widehat{v}\right)+(p-2)|\omega_i|^{p-2}\left(f_{i,1}^2+2f_{i,1}f_{i,2}\right)\right]dx \nonumber\\
    & + \int_{\mathcal{R}_{i,R}}|x|^{\alpha}\left[|\nabla U|^{p-2} |\nabla \varphi|^2+(p-2)|\omega_i|^{p-2}f_{i,2}^2\right]dx \nonumber\\
    \geq & \int_{\mathcal{R}_{i,R}}|x|^{\alpha}\left[|\nabla U|^{p-2}\left(|\nabla \widehat{v}|^2+2 \nabla \varphi_i\cdot \nabla \widehat{v}\right)+(p-2)|\omega_i|^{p-2}
    \left(f_{i,1}^2+2f_{i,1}f_{i,2}\right)\right]dx.
    \end{align}
    Then, combining the convergence
    \begin{equation*}
    \begin{split}
    & \nabla \varphi_i \chi_{\mathcal{R}_i}\rightharpoonup 0,
    \quad f_{i,1}\to \frac{\nabla U}{|\nabla U|}\cdot\nabla \widehat{v},
    \quad f_{i,2}\chi_{\mathcal{R}_i}\rightharpoonup 0,
    \quad \mbox{locally in}\quad L^2(\mathbb{R}^N\backslash\{0\}),\\
    & |\omega_i|\to |\nabla U|\quad \mbox{a.e.}, \quad |(B(0,R)\backslash B(0,1/R))\backslash\mathcal{R}_{i,R}|=|\mathcal{S}_{i,R}|\to 0,
    \end{split}
    \end{equation*}
    with the fact that
    \[
    |\omega_i|^{p-2}\leq C(p)|\nabla U|^{p-2},
    \]
    by Lebesgue's dominated convergence theorem, we deduce that
    \begin{small}
    \begin{align*}%\label{evbcbbc}
    & \lim_{i\to \infty}\int_{\mathcal{R}_{i,R}}|x|^{\alpha}\left[|\nabla U|^{p-2}\left(|\nabla \widehat{v}|^2+2 \nabla \varphi_i\cdot \nabla \widehat{v}\right)+(p-2)|\omega_i|^{p-2}
    \left(f_{i,1}^2+2f_{i,1}f_{i,2}\right)\right]dx
    \\
    \to & \int_{B(0,R)\backslash B(0,1/R)}|x|^{\alpha}\left[|\nabla U|^{p-2}|\nabla \widehat{v} |^2+(p-2)|\nabla U|^{p-2}\left(\frac{\nabla U\cdot\nabla \widehat{v} }{| \nabla U|}\right)^2\right]dx,
    \end{align*}
    \end{small}
    then combining \eqref{evbcb} with \eqref{evbcbb} we have
    \begin{small}
    \begin{align}\label{evbcbbcf}
    & \liminf_{i\to \infty}\int_{B(0,R)\backslash B(0,1/R)}|x|^{\alpha}\left[|\nabla U|^{p-2}|\nabla \widehat{v}_i|^2+(p-2)|\omega_i|^{p-2}\left(\frac{|\nabla (U+v_i)|-|\nabla U|}{\varepsilon_i}\right)^2\right]dx
    \nonumber\\
    \geq & \int_{B(0,R)\backslash B(0,1/R)}|x|^{\alpha}\left[|\nabla U|^{p-2}|\nabla \widehat{v} |^2+(p-2)|\nabla U|^{p-2}\left(\frac{\nabla U\cdot\nabla \widehat{v} }{| \nabla U|}\right)^2\right]dx.
    \end{align}
    \end{small}
    Recalling \eqref{evbcb} and since $R>1$ is arbitrary, \eqref{evbcbbcf} proves that
    \begin{align*}%\label{uinb2p}
    & \int_{\mathbb{R}^N}|x|^{\alpha}\left[|\nabla U|^{p-2}|\nabla \widehat{v}|^2+(p-2)|\nabla U|^{p-2}\left(\frac{\nabla U\cdot\nabla \widehat{v}}{|\nabla U|}\right)^2\right]dx \\
    \leq & \left[(p^*_{\alpha,\beta}-1)+\tau\right]
    \int_{\mathbb{R}^N}|x|^{\beta}U^{p^*_{\alpha,\beta}-2}|\widehat{v}|^2dx,
    \end{align*}
    which contradicts Proposition \ref{propevl} due to the orthogonality of $\widehat{v}$ to $T_{U} \mathcal{M}$ (being the strong $L^2_{\beta,*}(\mathbb{R}^N)$ limit of $\widehat{v}_i$).
    \end{proof}

    The main ingredient in the proof of Theorem \ref{thmprtp} is contained in the lemma below, where the behavior near $\mathcal{M}$ is studied.

    \begin{lemma}\label{lemma:rtnm2b}
    Suppose $2\leq p<N$.
    There exists a small constant $\rho>0$ such that for any sequence $\{u_n\}\subset \mathcal{D}^{1,p}_{\alpha,r}(\mathbb{R}^N)\backslash \mathcal{M}$ satisfying $\inf_n\|u_n\|>0$ and ${\rm dist}(u_n,\mathcal{M})\to 0$, it holds that
    \begin{equation}\label{rtnmb}
    \liminf_{n\to\infty}
    \frac{\|u_n\|^p- S_r\|u_n\|_*^p}
    {{\rm dist}(u_n,\mathcal{M})^p}
    \geq \rho.
    \end{equation}
    \end{lemma}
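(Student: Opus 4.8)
The plan is to argue by contradiction and reduce the statement to the spectral gap estimate of Lemma \ref{lemsgap}. Suppose \eqref{rtnmb} fails, so that along a sequence $\{u_n\}\subset \mathcal{D}^{1,p}_{\alpha,r}(\mathbb{R}^N)\backslash\mathcal{M}$ with $\inf_n\|u_n\|>0$ and ${\rm dist}(u_n,\mathcal{M})\to 0$ we have
\[
\|u_n\|^p- S_r\|u_n\|_*^p < \rho\,{\rm dist}(u_n,\mathcal{M})^p
\]
for $\rho$ as small as we like. By scaling invariance of both sides of \eqref{Ppbcm} under $u\mapsto cU_\lambda\mapsto c\,U_\lambda(\lambda\,\cdot)$-type transformations, we may normalize the sequence. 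First I would use the compactness of $\mathcal{M}$ modulo dilations and scalar multiplication to choose, for each $n$, an almost-optimal pair $(c_n,\lambda_n)$ realizing (up to $o({\rm dist}(u_n,\mathcal{M}))$) the distance ${\rm dist}(u_n,\mathcal{M})=\inf_{c,\lambda}\|u_n-cU_\lambda\|$; after a dilation $x\mapsto \lambda_n^{-1}x$ and rescaling we may assume $c_n\to c_\infty\neq 0$ (the normalization $\inf_n\|u_n\|>0$ rules out $c_\infty=0$) and $\lambda_n=1$, so write
\[
u_n = c_n U + w_n,\qquad w_n:=u_n-c_nU,\qquad \|w_n\|=:\varepsilon_n\to 0.
\]
The minimality of the projection gives the orthogonality condition: $w_n$ is orthogonal to $T_U\mathcal{M}$ in $L^2_{\beta,*}(\mathbb{R}^N)$, i.e. $\int_{\mathbb{R}^N}|x|^\beta U^{p^*_{\alpha,\beta}-2}\xi\,w_n\,dx=0$ for all $\xi\in T_U\mathcal{M}$ (this uses the identification in the Remark after Proposition \ref{propevl} together with $\mathcal{D}^{1,p}_{\alpha,r}\hookrightarrow L^{p^*_{\alpha,\beta}}_\beta$). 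After absorbing $c_n$ we may also assume $c_n=1$, so $u_n=U+w_n$.

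Next I would Taylor-expand the two terms of the deficit to second order in $w_n$. For the gradient term, apply the algebraic inequality \eqref{uinb2p} of Lemma \ref{lemui1p} pointwise with $x=\nabla U(x)$, $y=\nabla w_n(x)$, multiply by $|x|^\alpha$ and integrate; since $U$ solves \eqref{Ppwh}, the first-order term $\int |x|^\alpha|\nabla U|^{p-2}\nabla U\cdot\nabla w_n = \int |x|^\beta U^{p^*_{\alpha,\beta}-1}w_n$ is produced. This gives
\[
\|U+w_n\|^p\geq \|U\|^p + p\!\int_{\mathbb{R}^N}\!|x|^\beta U^{p^*_{\alpha,\beta}-1}w_n
+\tfrac{(1-\kappa)p}{2}\!\int_{\mathbb{R}^N}\!|x|^\alpha\big[|\nabla U|^{p-2}|\nabla w_n|^2+(p-2)|\omega_n|^{p-2}(|\nabla(U+w_n)|-|\nabla U|)^2\big]\,dx,
\]
where $\omega_n=\omega(\nabla U,\nabla(U+w_n))$ is exactly the quantity appearing in Lemma \ref{lemsgap}, and the $\mathcal{C}_1|\nabla w_n|^p$ remainder is discarded (it is $\geq 0$). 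For the $L^{p^*_{\alpha,\beta}}$ term, $p^*_{\alpha,\beta}>2$ since $p\geq 2$, so I would apply \eqref{uinx2pb} with $a=U$, $b=w_n$, multiply by $|x|^\beta$ and integrate to get
\[
\|U+w_n\|_*^{p^*_{\alpha,\beta}}\leq \|U\|_*^{p^*_{\alpha,\beta}}+p^*_{\alpha,\beta}\!\int_{\mathbb{R}^N}\!|x|^\beta U^{p^*_{\alpha,\beta}-1}w_n
+\Big(\tfrac{p^*_{\alpha,\beta}(p^*_{\alpha,\beta}-1)}{2}+\kappa\Big)\!\int_{\mathbb{R}^N}\!|x|^\beta U^{p^*_{\alpha,\beta}-2}|w_n|^2+\mathcal{C}_2\!\int_{\mathbb{R}^N}\!|x|^\beta|w_n|^{p^*_{\alpha,\beta}}.
\]
Then I would pass from $\|\cdot\|_*^{p^*_{\alpha,\beta}}$ to $\|\cdot\|_*^{p}$ via the elementary expansion $(A+B)^{p/p^*_{\alpha,\beta}}\leq A^{p/p^*_{\alpha,\beta}}+\tfrac{p}{p^*_{\alpha,\beta}}A^{p/p^*_{\alpha,\beta}-1}B+C B^2 A^{p/p^*_{\alpha,\beta}-2}$ (valid since $p/p^*_{\alpha,\beta}<1$) with $A=\|U\|_*^{p^*_{\alpha,\beta}}$ fixed and $B$ the small correction, and use $S_r\|U\|_*^p=\|U\|^p$. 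Using that both the $w_n$-linear terms cancel against $p\int|x|^\beta U^{p^*_{\alpha,\beta}-1}w_n$, and bounding the higher-order remainders $\int|x|^\beta|w_n|^{p^*_{\alpha,\beta}}\leq C\|w_n\|^{p^*_{\alpha,\beta}}=o(\varepsilon_n^2)$ and (since $p\geq 2$) $\|w_n\|_*^2\lesssim \|w_n\|_{\mathcal{D}^{1,2}_{\alpha,*}}^2\lesssim \varepsilon_n^{?}$ via Hölder as in Lemma \ref{lemsgap}, the deficit is bounded below by
\[
\|u_n\|^p-S_r\|u_n\|_*^p\geq \tfrac{(1-\kappa)p}{2}\int_{\mathbb{R}^N}|x|^\alpha\big[|\nabla U|^{p-2}|\nabla w_n|^2+(p-2)|\omega_n|^{p-2}(|\nabla(U+w_n)|-|\nabla U|)^2\big]\,dx
-\big(\tfrac{p^*_{\alpha,\beta}-1}{2}+C\kappa\big)p\int_{\mathbb{R}^N}|x|^\beta U^{p^*_{\alpha,\beta}-2}|w_n|^2 + o(\varepsilon_n^2).
\]

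Finally I would invoke Lemma \ref{lemsgap}: since $w_n\to 0$ in $\mathcal{D}^{1,p}_{\alpha,r}$ and $w_n\perp T_U\mathcal{M}$, for $i=n$ large the spectral gap inequality gives that the bracket on the right is $\geq \big[(p^*_{\alpha,\beta}-1)+\tau\big]\int|x|^\beta U^{p^*_{\alpha,\beta}-2}|w_n|^2$; choosing $\kappa$ small enough that $C\kappa<\tau/4$, say, the lower bound becomes $\geq c\,\tau\int|x|^\beta U^{p^*_{\alpha,\beta}-2}|w_n|^2+o(\varepsilon_n^2)$ for a dimensional constant $c>0$. It remains to compare $\int|x|^\beta U^{p^*_{\alpha,\beta}-2}|w_n|^2$ and $\varepsilon_n^2$, and to compare $\varepsilon_n$ with ${\rm dist}(u_n,\mathcal{M})$; the latter is comparable to $\|w_n\|=\varepsilon_n$ by construction of the projection, while for the former one uses that, after the spectral gap inequality forces the left-hand bracket $\sim \int|x|^\beta U^{p^*_{\alpha,\beta}-2}|w_n|^2$, and the left-hand bracket also controls $\int|x|^\alpha|\nabla U|^{p-2}|\nabla w_n|^2=\|w_n\|_{\mathcal{D}^{1,2}_{\alpha,*}}^2$, so $\|w_n\|_{\mathcal{D}^{1,2}_{\alpha,*}}^2\sim\int|x|^\beta U^{p^*_{\alpha,\beta}-2}|w_n|^2$, and finally $\varepsilon_n^p=\|w_n\|^p$ is controlled by $\|w_n\|_{\mathcal{D}^{1,2}_{\alpha,*}}^2$ up to the Hölder loss from Lemma \ref{lemsgap} — this is where the exponent $p$ (rather than $2$) in the statement enters. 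Putting these comparisons together yields $\|u_n\|^p-S_r\|u_n\|_*^p\geq \rho_0\,{\rm dist}(u_n,\mathcal{M})^p$ for an explicit $\rho_0>0$, contradicting the assumption that $\rho$ could be taken arbitrarily small. \textbf{The main obstacle} I anticipate is precisely this last bookkeeping of exponents: controlling the $o(\varepsilon_n^2)$ error terms and the passage between the $\mathcal{D}^{1,2}_{\alpha,*}$-norm, the $L^2_{\beta,*}$-norm, and the $\mathcal{D}^{1,p}_{\alpha}$-norm of $w_n$ in a way that is uniform in $n$ and gives back the correct power ${\rm dist}(u_n,\mathcal{M})^p$; one must be careful that the remainder terms from Lemmas \ref{lemui1p} and \ref{lemui1p*l} are genuinely negligible compared to the quadratic term, which relies on $p\geq 2$ (so $p^*_{\alpha,\beta}>2$) and on the Hölder estimates already used in Lemma \ref{lemsgap}.
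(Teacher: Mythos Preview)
Your overall strategy matches the paper's, but there is a genuine gap at the point where you ``discard'' the remainder $\mathcal{C}_1\int_{\mathbb{R}^N}|x|^\alpha|\nabla w_n|^p$ from \eqref{uinb2p}. That term is not expendable: it equals $\mathcal{C}_1\|w_n\|^p=\mathcal{C}_1\varepsilon_n^p$ and is precisely the contribution that delivers the exponent $p$ in \eqref{rtnmb}. In the paper's argument the spectral gap (Lemma~\ref{lemsgap}) is used only to make the \emph{quadratic} package
\[
\tfrac{(1-\kappa)p}{2}\int|x|^\alpha\big[|\nabla U|^{p-2}|\nabla w_n|^2+(p-2)|\omega_n|^{p-2}(|\nabla(U+w_n)|-|\nabla U|)^2\big]-\big(\tfrac{p(p^*_{\alpha,\beta}-1)}{2}+C\kappa\big)\int|x|^\beta U^{p^*_{\alpha,\beta}-2}|w_n|^2
\]
nonnegative, after which the surviving lower bound is $\mathcal{C}_1\varepsilon_n^p-o(\varepsilon_n^p)$ (the $|w_n|^{p^*_{\alpha,\beta}}$ error is $O(\varepsilon_n^{p^*_{\alpha,\beta}})=o(\varepsilon_n^p)$).

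By contrast, your route leaves you with a lower bound $c\tau\int|x|^\beta U^{p^*_{\alpha,\beta}-2}|w_n|^2+o(\varepsilon_n^2)$, and the final paragraph does not close: the spectral gap is an inequality, not an equivalence, so it does not force the bracket to be comparable to $\int|x|^\beta U^{p^*_{\alpha,\beta}-2}|w_n|^2$; and the H\"older estimate from Lemma~\ref{lemsgap} gives $\|w_n\|_{\mathcal{D}^{1,2}_{\alpha,*}}^2\leq \|U\|^{p-2}\|w_n\|^2$, i.e.\ the direction opposite to what you need. There is no uniform bound of the form $\|w_n\|^p\lesssim\int|x|^\beta U^{p^*_{\alpha,\beta}-2}|w_n|^2$ for functions $w_n$ orthogonal to $T_U\mathcal{M}$ (think of $w_n$ concentrating where $U$ is small). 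The remedy is simply to keep the $\mathcal{C}_1|\nabla w_n|^p$ term from \eqref{uinb2p}; once you do, the bookkeeping you flagged as the ``main obstacle'' disappears.
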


    \begin{proof}
    Let $d_n:={\rm dist}(u_n,\mathcal{M})=\inf_{c\in\mathbb{R}, \lambda>0}\|u_n-cU_\lambda\|\to 0$ as $n\to \infty$. We know that for each $u_n\in \mathcal{D}^{1,p}_{\alpha,r}(\mathbb{R}^N)$, there exist $c_n\in\mathbb{R}$ and $\lambda_n>0$ such that $d_n=\|u_n-c_nU_{\lambda_n}\|$. In fact, since $2\leq p<N$, for each fixed $n$, from Lemma \ref{lemui1p}, we obtain that for any $0<\kappa<1$, there exists a constant $\mathcal{C}_1=\mathcal{C}_1(p,\kappa)>0$ such that
    \begin{small}
    \begin{align}\label{ikeda}
    \|u_n-cU_\lambda\|^p
    = & \int_{\mathbb{R}^N}|x|^{\alpha}|\nabla u_n-c\nabla U_\lambda|^p dx\nonumber\\
    \geq & \int_{\mathbb{R}^N}|x|^{\alpha}|\nabla u_n|^p dx
    -pc\int_{\mathbb{R}^N}|x|^{\alpha}|\nabla u_n|^{p-2}  \nabla u_n\cdot \nabla U_\lambda dx  +\mathcal{C}_1|c|^{p}\int_{\mathbb{R}^N}|x|^{\alpha}|\nabla U_\lambda|^{p} dx
    \nonumber\\ &
    +\frac{(1-\kappa)p}{2}c^2 \int_{\mathbb{R}^N}|x|^{\alpha} |\nabla u_n|^{p-2}  |\nabla U_\lambda|^2dx \nonumber\\
    & +\frac{(1-\kappa)p(p-2)}{2}\int_{\mathbb{R}^N}|x|^{\alpha}|\omega(\nabla u_n, \nabla u_n-c \nabla U_{\lambda})|^{p-2}(|c \nabla U_{\lambda}|-|\nabla u_n|)^2  dx \nonumber\\
    \geq & \|u_n\|^p+ \mathcal{C}_1|c|^{p}\|U\|^p-pc\int_{\mathbb{R}^N}|x|^{\alpha}|\nabla u_n|^{p-2}  \nabla u_n\cdot \nabla U_\lambda dx\nonumber\\
    \geq & \|u_n\|^p+ \mathcal{C}_1|c|^{p}\|U\|^p-p|c|\|U\|\|u_n\|^{p-1},
    \end{align}
    \end{small}
    where $\omega:\mathbb{R}^{2N}\to \mathbb{R}^N$ corresponds to $\nabla u_n$ and $\nabla u_n-c \nabla U_{\lambda}$ as in Lemma \ref{lemui1p}  for the case $p\geq 2$.
    Thus the minimizing sequence of $d_n$, say $\{c_{n,m},\lambda_{n,m}\}$, must satisfying $|c_{n,m}|\leq C$ which means $\{c_{n,m}\}$ is bounded.
    On the other hand,
    \begin{equation*}
    \begin{split}
    \left|\int_{|\lambda x|\leq \rho}|x|^{\alpha}|\nabla u_n|^{p-2}  \nabla u_n\cdot \nabla U_\lambda dx\right|
    \leq & \int_{|y|\leq \rho}|y|^{\alpha}|\nabla  (u_n)_{\frac{1}{\lambda}}(y)|^{p-1}|\nabla U(y)| dy \\
    \leq & \|u_n\|^{p-1}\left(\int_{|y|\leq \rho}|y|^{\alpha}|\nabla U|^p dy\right)^{\frac{1}{p}} \\
    = & o_\rho(1)
    \end{split}
    \end{equation*}
    as $\rho\to 0$ which is uniform for $\lambda>0$, where $(u_n)_{\frac{1}{\lambda}}(y)=\lambda^{-\frac{N-p+\alpha}{p}}u_n(\lambda^{-1}y)$, and
    \begin{equation*}
    \begin{split}
    \left|\int_{|\lambda x|\geq \rho}|x|^{\alpha}|\nabla u_n|^{p-2}  \nabla u_n\cdot \nabla U_\lambda dx\right|
    \leq & \|U\|\left(\int_{|x|\geq \frac{\rho}{\lambda}}|x|^{\alpha}|\nabla u_n|^p dy\right)^{\frac{1}{p}}
    =  o_\lambda(1)
    \end{split}
    \end{equation*}
    as $\lambda\to 0$ for any fixed $\rho>0$. By taking $\lambda\to 0$ and then $\rho\to 0$, we obtain
    \[\left|\int_{\mathbb{R}^N}|x|^{\alpha}|\nabla u_n|^{p-2}  \nabla u_n\cdot \nabla U_\lambda dx\right| \to 0\quad \mbox{as}\quad \lambda\to 0.\]
    Moreover, by the explicit from of $U_\lambda$ we have
    \begin{equation*}
    \begin{split}
    \left|\int_{|\lambda x|\leq R}|x|^{\alpha}|\nabla u_n|^{p-2}  \nabla u_n\cdot \nabla U_\lambda dx\right|
    \leq & \|U\|\left(\int_{| x|\leq \frac{R}{\lambda}}|x|^{\alpha}|\nabla u_n|^p dx\right)^{\frac{1}{p}}
    = o_\lambda(1)
    \end{split}
    \end{equation*}
    as $\lambda\to +\infty$ for any fixed $R>0$, and
    \begin{equation*}
    \begin{split}
    \left|\int_{|\lambda x|\geq R}|x|^{\alpha}|\nabla u_n|^{p-2}  \nabla u_n\cdot \nabla U_\lambda dx\right|
    \leq & \int_{|y|\geq R}|y|^{\alpha}|\nabla (u_n)_{\frac{1}{\lambda}}(y)|^{p-1}|\nabla U(y)| dy \\
    \leq & \|u_n\|^{p-1}\left(\int_{|y|\geq R}|y|^{\alpha}|\nabla U|^p dy\right)^{\frac{1}{p}}
    =  o_R(1)
    \end{split}
    \end{equation*}
    as $R\to +\infty$ which is uniform for $\lambda>0$. Thus, by taking first $\lambda\to +\infty$ and then $R\to +\infty$, we also obtain
    \[\left|\int_{\mathbb{R}^N}|x|^{\alpha}|\nabla u_n|^{p-2}  \nabla u_n\cdot \nabla U_\lambda dx\right| \to 0\quad \mbox{as}\quad \lambda\to +\infty.\]
    It follows from (\ref{ikeda}) and $d_n\to 0$, $\inf_n\|u_n\|>0$ that the minimizing sequence $\{c_{n,m},\lambda_{n,m}\}$ must satisfying $1/C\leq |\lambda_{n,m}|\leq C$ for some $C>1$,  which means $\{\lambda_{n,m}\}$ is bounded. Thus for each $u_n\in  \mathcal{D}^{1,p}_{\alpha,r}(\mathbb{R}^N)\backslash \mathcal{M}$, $d_n$ can be attained by some $c_n\in\mathbb{R}$ and $\lambda_n>0$.

    Since $\mathcal{M}$ is two-dimensional manifold embedded in $\mathcal{D}^{1,p}_{\alpha,r}(\mathbb{R}^N)$, that is
    \[
    (c,\lambda)\in\mathbb{R}\times\mathbb{R}_+\to cU_\lambda\in \mathcal{D}^{1,p}_{\alpha,r}(\mathbb{R}^N),
    \]
    then from Proposition \ref{propev}, the tangential space at $(c_n,\lambda_n)$ is given by
    \[
    T_{c_n U_{\lambda_n}}\mathcal{M}={\rm Span}\left\{U_{\lambda_n}, \quad \lambda_n^{\frac{N-p+\alpha}{p}}W_0(\lambda_n x),\quad \lambda_n^{\frac{N-p+\alpha}{p}}W_{k,i}(\lambda_n x), i=1,\ldots, M_k\right\},
    \]
    if (\ref{npkev}) holds, otherwise
    \[
    T_{c_n U_{\lambda_n}}\mathcal{M}={\rm Span}\left\{U_{\lambda_n}, \quad \lambda_n^{\frac{N-p+\alpha}{p}}W_0(\lambda_n x)\right\}.
    \]
    Anyway we must have that $(u_n-c_n U_{\lambda_n})$ is perpendicular to $T_{c_n U_{\lambda_n}}\mathcal{M}$, particularly
    \[
    \int_{\mathbb{R}^N}|x|^{\beta}U_{\lambda_n}^{p^*_{\alpha,\beta}-2}(u_n-c_n U_{\lambda_n})\xi dx=0,\quad \forall \xi \in T_{c_n U_{\lambda_n}}\mathcal{M},
    \]
    thus taking $\xi=U_{\lambda_n}$ we obatin
    \begin{equation*}
    \int_{\mathbb{R}^N}|x|^\alpha |\nabla U_{\lambda_n}|^{p-2}\nabla U_{\lambda_n}\cdot \nabla (u_n-c_n U_{\lambda_n}) dx=0.
    \end{equation*}
    Let
    \begin{equation}\label{defunwn}
    u_n=c_n U_{\lambda_n}+d_n w_n,
    \end{equation}
     then $w_n$ is perpendicular to $T_{c_n U_{\lambda_n}}\mathcal{M}$, we have
    \begin{equation*}
    \|w_n\|=1\quad \mbox{and}\quad \int_{\mathbb{R}^N}|x|^\alpha |\nabla U_{\lambda_n}|^{p-2}\nabla U_{\lambda_n}\cdot \nabla w_n dx=0.%\quad\mbox{and}\quad \|U_{\lambda_n}\|=\|U\|.
    \end{equation*}
    From Lemma \ref{lemui1p}, for any $\kappa>0$, there exists a constant $\mathcal{C}_1=\mathcal{C}_1(p,\kappa)>0$ such that
    \begin{small}
    \begin{align}\label{epknug}
    \int_{\mathbb{R}^N}|x|^{\alpha}|\nabla u_n|^p dx
    = & \int_{\mathbb{R}^N}|x|^{\alpha}|c_n \nabla U_{\lambda_n}+d_n \nabla w_n|^p dx\nonumber\\
    \geq & |c_n|^{p}\int_{\mathbb{R}^N}|x|^{\alpha}|\nabla U_{\lambda_n}|^p dx
    +p|c_n|^{p-2}c_nd_n \int_{\mathbb{R}^N}|x|^{\alpha}|\nabla U_{\lambda_n}|^{p-2}  \nabla U_{\lambda_n}\cdot \nabla w_n dx  \nonumber\\
    & +\mathcal{C}_1d_n^{p}\int_{\mathbb{R}^N}|x|^{\alpha}|\nabla w_n|^{p} dx
    +\frac{(1-\kappa)p}{2} |c_n|^{p-2}d_n^2\int_{\mathbb{R}^N}|x|^{\alpha} |\nabla U_{\lambda_n}|^{p-2}  |\nabla w_n|^2dx \nonumber\\
    & +\frac{(1-\kappa)p(p-2)}{2}\int_{\mathbb{R}^N}|x|^{\alpha}|\omega(c_n \nabla U_{\lambda_n},\nabla u_n)|^{p-2}(|c_n \nabla U_{\lambda_n}|-|\nabla u_n|)^2  dx \nonumber\\
    = & |c_n|^{p}\|U\|^p+ \mathcal{C}_1d_n^{p}
     +\frac{(1-\kappa)p}{2} |c_n|^{p-2}d_n^2\int_{\mathbb{R}^N}|x|^{\alpha} |\nabla U_{\lambda_n}|^{p-2}  |\nabla w_n|^2dx \nonumber\\
    & +\frac{(1-\kappa)p(p-2)}{2}\int_{\mathbb{R}^N}|x|^{\alpha}|\omega(c_n \nabla U_{\lambda_n},\nabla u_n)|^{p-2}(|c_n \nabla U_{\lambda_n}|-|\nabla u_n|)^2  dx,
    \end{align}
    \end{small}
    where $\omega:\mathbb{R}^{2N}\to \mathbb{R}^N$ corresponds to $c_n \nabla U_{\lambda_n}$ and $u_n$ as in Lemma \ref{lemui1p}, since
    \begin{equation*}
    \int_{\mathbb{R}^N}|x|^{\beta}U_{\lambda_n}^{p^*_{\alpha,\beta}-1}w_n dx=\int_{\mathbb{R}^N}|x|^\alpha |\nabla U_{\lambda_n}|^{p-2}\nabla U_{\lambda_n}\cdot \nabla w_n dx=0,
    \end{equation*}
    and
    \begin{equation*}
    \int_{\mathbb{R}^N}|x|^{\beta}U_{\lambda_n}^{p^*_{\alpha,\beta}} dx=\int_{\mathbb{R}^N}|x|^\alpha |\nabla U_{\lambda_n}|^{p}dx=\|U\|^p.
    \end{equation*}
    Then from Lemma \ref{lemui1p*l}, for any $\kappa>0$, there exists a constant $\mathcal{C}_2=\mathcal{C}_2(p^*_{\alpha,\beta},\kappa)>0$ such that
    \begin{align*}%\label{epkeyiybb}
    & \int_{\mathbb{R}^N}|x|^{\beta}|u_n|^{p^*_{\alpha,\beta}} dx \\
    = & \int_{\mathbb{R}^N}|x|^{\beta}|c_n U_{\lambda_n}+d_nw_n|^{p^*_{\alpha,\beta}}  dx\\
    \leq & |c_n|^{p^*_{\alpha,\beta}}\int_{\mathbb{R}^N}|x|^{\beta}U_{\lambda_n}^{p^*_{\alpha,\beta}} dx
    +|c_n|^{p^*_{\alpha,\beta}-2}c_n p^*_{\alpha} d_n \int_{\mathbb{R}^N}|x|^{\beta}U_{\lambda_n}^{p^*_{\alpha,\beta}-1}w_n dx  \\
    & +\left(\frac{p^*_{\alpha,\beta}(p^*_{\alpha,\beta}-1)}{2}+\kappa\right)|c_n|^{p^*_{\alpha,\beta}-2} d_n^2
    \int_{\mathbb{R}^N}|x|^{\beta}U_{\lambda_n}^{p^*_{\alpha,\beta}-2}w_n^2 dx \\
    & +\mathcal{C}_2d_n^{p^*_{\alpha,\beta}}\int_{\mathbb{R}^N}|x|^{\beta}|w_n|^{p^*_{\alpha,\beta}} dx \\
    = & |c_n|^{p^*_{\alpha,\beta}}\|U\|^p
    +\left(\frac{p^*_{\alpha,\beta}(p^*_{\alpha,\beta}-1)}{2}+\kappa\right)|c_n|^{p^*_{\alpha,\beta}-2} d_n^2
    \int_{\mathbb{R}^N}|x|^{\beta}U_{\lambda_n}^{p^*_{\alpha,\beta}-2}w_n^2 dx
    + o(d_n^p),
    \end{align*}
    since $p<p^*_{\alpha,\beta}$.
    Thus, by the concavity of $t\mapsto t^{\frac{p}{p^*_{\alpha,\beta}}}$, we have
    \begin{align}\label{epkeyiyxbb}
    \left(\int_{\mathbb{R}^N}|x|^{\beta}|u_n|^{p^*_{\alpha,\beta}} dx\right)^{\frac{p}{p^*_{\alpha,\beta}}}
    \leq &  |c_n|^p\|U\|^{\frac{p^2}{p^*_{\alpha,\beta}}} +o(d_n^p)
    + \frac{p|c_n|^{p^*_{\alpha,\beta}-2} d_n^2}{p^*_{\alpha,\beta}}\left(\frac{p^*_{\alpha,\beta}(p^*_{\alpha,\beta}-1)}{2}+\kappa\right)
    \nonumber\\
    &\quad \times\|U\|^{\frac{p^2}{p^*_{\alpha,\beta}}-p}\int_{\mathbb{R}^N}|x|^{\beta}U_{\lambda_n}^{p^*_{\alpha,\beta}-2}w_n^2 dx.
    \end{align}
    Therefore, as $d_n\to 0$, combining \eqref{epknug} with \eqref{epkeyiyxbb} , it follows from Lemma \ref{lemsgap} that, by choosing $\kappa>0$ small enough,
    \begin{small}
    \begin{align*}
    & \int_{\mathbb{R}^N}|x|^{\alpha}|\nabla u_n|^p dx- S_r\left(\int_{\mathbb{R}^N}|x|^{\beta}|u_n|^{p^*_{\alpha,\beta}} dx\right)^{\frac{p}{p^*_{\alpha,\beta}}} \\
    \geq & |c_n|^{p}\|U\|^p+ \mathcal{C}_1d_n^{p}
    +\frac{(1-\kappa)p}{2} d_n^2\int_{\mathbb{R}^N}|x|^{\alpha} |\nabla c_n U_{\lambda_n}|^{p-2}  |\nabla w_n|^2dx \\
    & +\frac{(1-\kappa)p(p-2)}{2}\int_{\mathbb{R}^N}|x|^{\alpha}|\omega(c_n \nabla U_{\lambda_n},\nabla u_n)|^{p-2}(|c_n \nabla U_{\lambda_n}|-|\nabla u_n|)^2  dx \\
    & -S_r\Bigg\{|c_n|^p\|U\|^{\frac{p^2}{p^*_{\alpha,\beta}}} +o(d_n^p) \\
    & + \frac{p|c_n|^{p^*_{\alpha,\beta}-2} d_n^2}{p^*_{\alpha,\beta}}\left(\frac{p^*_{\alpha,\beta}(p^*_{\alpha,\beta}-1)}{2}+\kappa\right)
    \|U\|^{\frac{p^2}{p^*_{\alpha,\beta}}-p}\int_{\mathbb{R}^N}|x|^{\beta}U_{\lambda_n}^{p^*_{\alpha,\beta}-2}w_n^2 dx\Bigg\} \\
    \geq  & \mathcal{C}_1d_n^p -o(d_n^p)  \\
    & + \frac{(1-\kappa)p  |c_n|^{p^*_{\alpha,\beta}-2} d_n^2}{2}\left[(p^*_{\alpha,\beta}-1)+\tau\right]\int_{\mathbb{R}^N}|x|^{\beta}U_{\lambda_n}^{p^*_{\alpha,\beta}-2}w_n^2 dx\\
    & - \frac{p|c_n|^{p^*_{\alpha,\beta}-2} d_n^2}{p^*_{\alpha,\beta}}\left(\frac{p^*_{\alpha,\beta}(p^*_{\alpha,\beta}-1)}{2}+\kappa\right)
    S_r\|U\|^{\frac{p^2}{p^*_{\alpha,\beta}}-p}\int_{\mathbb{R}^N}|x|^{\beta}U_{\lambda_n}^{p^*_{\alpha,\beta}-2}w_n^2 dx \\
    \geq  & \mathcal{C}_1d_n^p -o(d_n^p),
    \end{align*}
    \end{small}
    since $\|U\|^p=\|U\|_*^{p^*_{\alpha,\beta}}=S_r^{\frac{p^*_{\alpha,\beta}}{p^*_{\alpha,\beta}-p}}$ and $\|U\|^p=S_r\|U\|_*^p$, then (\ref{rtnmb}) follows immediately.
    \end{proof}

\noindent{\bf Proof of Theorem \ref{thmprtp}.} We argue by contradiction. In fact, if the theorem is false then there exists a sequence $\{u_n\}\subset \mathcal{D}^{1,p}_{\alpha,r}(\mathbb{R}^N)\backslash \mathcal{M}$ such that
    \begin{equation*}
    \frac{\|u_n\|^p-S_r\|u_n\|^p_*}{{\rm dist}(u_n,\mathcal{M})^p}\to 0,\quad \mbox{as}\quad n\to \infty.
    \end{equation*}
    By homogeneity, we can assume that $\|u_n\|=1$, and after selecting a subsequence we can assume that ${\rm dist}(u_n,\mathcal{M})\to \xi\in[0,1]$ since ${\rm dist}(u_n,\mathcal{M})=\inf_{c\in\mathbb{R}, \lambda>0}\|u_n-cU_{\lambda}\|\leq \|u_n\|$. If $\xi=0$, then we have a contradiction by Lemma \ref{lemma:rtnm2b}.

    The other possibility only is that $\xi>0$, that is
    \[{\rm dist}(u_n,\mathcal{M})\to \xi>0\quad \mbox{as}\quad n\to \infty,\]
    then we must have
    \begin{equation}\label{wbsi}
    \|u_n\|^p-S_r\|u_n\|^p_*\to 0,\quad \|u_n\|=1.
    \end{equation}
    Since $\{u_n\}\subset \mathcal{D}^{1,p}_{\alpha,r}(\mathbb{R}^N)\backslash \mathcal{M}$ are radial, making the changes that $v_n(s)=u_n(r)$ with $r=s^{\frac{p}{p+\beta-\alpha}}$, then (\ref{wbsi}) is equivalent to
    \begin{equation}\label{bsiy}
    \begin{split}
    \int^\infty_0|v_n'(s)|^p s^{K-1}ds
    -C_p(K)\left(\int^\infty_0|v_n(s)|^{\frac{Kp}{K-p}}s^{K-1}ds\right)^{\frac{K-p}{K}}\to 0
    \end{split}
    \end{equation}
    where $K=\frac{p(N+\beta)}{p+\beta-\alpha}>p$ and $C_p(K)=\pi^{\frac{p}{2}}K\left(\frac{K-p}{p-1}\right)^{p-1}
    \left(\frac{\Gamma(\frac{K}{p})\Gamma(1+K-K/p)}{\Gamma(K/2+1)\Gamma(K)}\right)^{\frac{p}{K}}
    \left(\frac{\Gamma(K/2)}{2\pi^{K/2}}\right)^{\frac{p}{K}}$, see the proof of Theorem \ref{thmPbcm}. When $K$ is an integer, then (\ref{bsiy}) is equivalent to
    \begin{equation}\label{bsib}
    \begin{split}
    \int_{\mathbb{R}^K}|\nabla v_n|^p dx
    -S(K)\left(\int_{\mathbb{R}^K}|v_n|^{\frac{Kp}{K-p}}dx\right)^{\frac{K-p}{K}}\to 0,
    \end{split}
    \end{equation}
    where $S(K)=\pi^{\frac{p}{2}}K\left(\frac{K-p}{p-1}\right)^{p-1}
    \left(\frac{\Gamma(\frac{K}{p})\Gamma(1+K-K/p)}{\Gamma(K/2+1)\Gamma(K)}\right)^{\frac{p}{K}}$ is the best constant for the embedding of the space $\mathcal{D}^{1,p}_0(\mathbb{R}^K)$ into $L^{\frac{Kp}{K-p}}(\mathbb{R}^K)$, see \cite{Ta76}. In this case, by Lions' concentration and compactness principle (see \cite[Theorem \uppercase\expandafter{\romannumeral 1}.1]{Li85-1}), we have that there exists a sequence of positive numbers $\lambda_n$ such that
    \begin{equation*}
    \lambda_n^{\frac{K-p}{p}}v_n(\lambda_n x)\to V\quad \mbox{in}\quad \mathcal{D}^{1,p}_0(\mathbb{R}^K)\quad \mbox{as}\quad n\to \infty,
    \end{equation*}
    where $V(x)=c(a+|x|^{\frac{p}{p-1}})^{-\frac{K-p}{p}}$ for some $c\neq 0$ and $a>0$, that is
    \begin{equation*}
    \tau_n^{\frac{N-p+\alpha}{p}}u_n(\tau_n x)\to U_*\quad \mbox{in}\quad \mathcal{D}^{1,p}_\alpha(\mathbb{R}^N)\quad \mbox{as}\quad n\to \infty,
    \end{equation*}
    for some $U_*\in\mathcal{M}$, where $\tau_n=\lambda_n^{\frac{p+\beta-\alpha}{p}}$, which implies
    \begin{equation*}
    {\rm dist}(u_n,\mathcal{M})={\rm dist}\left(\tau_n^{\frac{N-p+\alpha}{p}}u_n(\tau_n x),\mathcal{M}\right)\to 0 \quad \mbox{as}\quad n\to \infty,
    \end{equation*}
    this is a contradiction. Moreover, since $\{u_n\}$ are radial, even $K$ is not an integer we can also get analogous contradiction.
    \qed

\subsection{The case $1<p<2$.} \label{sebsectp12}
\

    As mentioned in \cite{FZ22}, we shall see that Proposition \ref{propcet} allows us to deal with the case $\frac{2(N+\alpha)}{N+2+\beta}< p<2$. However, when $1<p\leq\frac{2(N+\alpha)}{N+2+\beta}$ which implies $p<p^*_{\alpha,\beta}\leq 2$, we will need a much more delicate compactness result that we now present.

    \begin{lemma}\label{propcetl}
    Let $1<p\leq \frac{2(N+\alpha)}{N+2+\beta}$ and $p-N<\alpha<p+\beta$, and let $v_i$ be a sequence of radial functions in $\mathcal{D}^{1,p}_{\alpha,r}(\mathbb{R}^N)$ satisfying
    \begin{align}\label{propcetll}
    & \int_{\mathbb{R}^N}|x|^{\alpha}
    (|\nabla U|+\varepsilon_i|\nabla v_i|)^{p-2}|\nabla v_i|^2dx\leq 1,
    \end{align}
    where $\varepsilon_i\in (0,1)$ is a sequence of positive numbers converging to $0$. Then, up to a subsequence, $v_i$ convergence weakly in $\mathcal{D}^{1,p}_{\alpha}(\mathbb{R}^N)$ to some $v\in\mathcal{D}^{1,p}_{\alpha,r}(\mathbb{R}^N)\cap L^2_{\beta,*}(\mathbb{R}^N)$. Also, given any constant $C_1\geq 0$ it holds
    \begin{align}\label{propcetlc}
    \int_{\mathbb{R}^N}|x|^{\beta}
    \frac{(U+C_1|\varepsilon_iv_i|)^{p^*_{\alpha,\beta}}}
    {U^2+|\varepsilon_iv_i|^2}|v_i|^2dx
    \to \int_{\mathbb{R}^N}|x|^{\beta}
    U^{p^*_{\alpha,\beta}-2}|v|^2dx.
    \end{align}
    \end{lemma}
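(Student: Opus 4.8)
The plan is to work in the radial one--dimensional setting produced by the substitution $|x|=r=s^{t}$, $t=p/(p+\beta-\alpha)$, under which the quantities in the statement become integrals against $s^{K-1}\,ds$ with $K:=p(N+\beta)/(p+\beta-\alpha)>p$ and $U(r)=V(s):=c\,(1+s^{p/(p-1)})^{-(K-p)/p}$; when $K\in\mathbb{N}$ these are precisely the radial quantities in $\mathbb{R}^K$ treated by Figalli and Zhang, and the non--integer case follows verbatim by radiality. I would first prove the a priori bound. Setting $g_i:=\varepsilon_i v_i$, multiplying \eqref{propcetll} by $\varepsilon_i^2$ gives $\int_{\mathbb{R}^N}|x|^\alpha(|\nabla U|+|\nabla g_i|)^{p-2}|\nabla g_i|^2\,dx\le\varepsilon_i^2$; splitting the integral over $\{|\nabla g_i|>|\nabla U|\}$, where $(|\nabla U|+|\nabla g_i|)^{p-2}\ge2^{p-2}|\nabla g_i|^{p-2}$, and over $\{|\nabla g_i|\le|\nabla U|\}$, where $(|\nabla U|+|\nabla g_i|)^{p-2}\ge2^{p-2}|\nabla U|^{p-2}$ and one applies Hölder's inequality with exponents $\tfrac2p,\tfrac2{2-p}$ together with $U\in\mathcal{D}^{1,p}_\alpha(\mathbb{R}^N)$, one obtains $\|g_i\|_{\mathcal{D}^{1,p}_\alpha}\le C\varepsilon_i$, whence $\|v_i\|_{\mathcal{D}^{1,p}_\alpha}\le C$ and, by Theorem \ref{thmPbcm}, $\|v_i\|_{L^{p^*_{\alpha,\beta}}_\beta}\le C$. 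Up to a subsequence we thus have $v_i\rightharpoonup v$ in $\mathcal{D}^{1,p}_\alpha(\mathbb{R}^N)$ with $v$ radial, while $g_i\to0$ strongly in $\mathcal{D}^{1,p}_\alpha(\mathbb{R}^N)$ and in $L^{p^*_{\alpha,\beta}}_\beta(\mathbb{R}^N)$.

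I would then extract local compactness: on each annulus $\{1/R<|x|<R\}$ the weight $r^{N-1+\alpha}$ is bounded above and below, so $\{v_i\}$ is bounded in $W^{1,p}(1/R,R)$ in the radial variable, and by the one--dimensional Morrey embedding and a diagonal procedure $v_i\to v$ locally uniformly on $(0,\infty)$, hence a.e. on $\mathbb{R}^N$; in particular $\varepsilon_i v_i\to0$ locally uniformly, so the integrand of \eqref{propcetlc} converges a.e. to $|x|^\beta U^{p^*_{\alpha,\beta}-2}|v|^2$. Since $p^*_{\alpha,\beta}\le2$ in the present range, the elementary inequalities $U+C_1|\varepsilon_iv_i|\le(1+C_1)\sqrt2\,(U^2+|\varepsilon_iv_i|^2)^{1/2}$ and $(U^2+|\varepsilon_iv_i|^2)^{p^*_{\alpha,\beta}/2-1}\le U^{p^*_{\alpha,\beta}-2}$ give the pointwise bound
\[
\frac{(U+C_1|\varepsilon_iv_i|)^{p^*_{\alpha,\beta}}}{U^2+|\varepsilon_iv_i|^2}\,|v_i|^2\ \le\ C\,U^{p^*_{\alpha,\beta}-2}|v_i|^2 .
\]
By the generalized dominated convergence theorem it is therefore enough to show $v_i\to v$ in $L^2_{\beta,*}(\mathbb{R}^N)$, i.e. $\int_{\mathbb{R}^N}|x|^\beta U^{p^*_{\alpha,\beta}-2}|v_i|^2\,dx\to\int_{\mathbb{R}^N}|x|^\beta U^{p^*_{\alpha,\beta}-2}|v|^2\,dx<\infty$ (the inclusion $v\in L^2_{\beta,*}(\mathbb{R}^N)$ then being automatic), and, given the a.e. convergence already at hand, Vitali's convergence theorem reduces this in turn to the uniform integrability and tightness of $\{\,|x|^\beta U^{p^*_{\alpha,\beta}-2}|v_i|^2\,\}_i$ on $\mathbb{R}^N$: no mass concentrates at the origin, at infinity, or in amplitude.

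Proving this no--concentration property is the heart of the matter and the step I expect to be the main obstacle. I would follow Figalli--Zhang and split $\mathbb{R}^N$ into the regime $G_i:=\{|\varepsilon_iv_i|\le U\}$, where $v_i$ coincides with $\varepsilon_i^{-1}$ times the truncation $\mathrm{sign}(g_i)\min(|g_i|,U)$ and a careful use of the degenerate bound of the first paragraph together with the compact embedding of Proposition \ref{propcet} is meant to yield the uniform $L^2_{\beta,*}$--bound and the tightness of the ``good'' part, and the complementary ``thin'' regime $B_i:=\{|\varepsilon_iv_i|>U\}$, whose $|x|^\beta U^{p^*_{\alpha,\beta}}$--measure is at most $\int_{\mathbb{R}^N}|x|^\beta|g_i|^{p^*_{\alpha,\beta}}\,dx\to0$ since $U<|g_i|$ on $B_i$. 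On $B_i$ one would combine the strong smallness $\|g_i\|_{\mathcal{D}^{1,p}_\alpha}\le C\varepsilon_i$, the Caffarelli--Kohn--Nirenberg inequality \eqref{Ppbcm}, the explicit behaviour of $U$ near $|x|=0$ and $|x|=\infty$, and the radial decay $|v_i(r)|\le C\|v_i\|_{\mathcal{D}^{1,p}_\alpha}\,r^{-(N-p+\alpha)/p}$, to see that $B_i$ is contained in a shrinking neighbourhood $\{|x|<\delta_i\}\cup\{|x|>R_i\}$ of the origin and of infinity and that its contribution to $\int_{\mathbb{R}^N}|x|^\beta U^{p^*_{\alpha,\beta}-2}|v_i|^2\,dx$ tends to $0$; this last point requires the delicate weighted (Hardy--type) estimates that form the technical core of the argument. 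Combining the two regimes, Vitali's theorem gives $v_i\to v$ in $L^2_{\beta,*}(\mathbb{R}^N)$, and the generalized dominated convergence theorem then yields \eqref{propcetlc} for every $C_1\ge0$, together with $v\in\mathcal{D}^{1,p}_{\alpha,r}(\mathbb{R}^N)\cap L^2_{\beta,*}(\mathbb{R}^N)$.
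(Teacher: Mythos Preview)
Your overall strategy matches the paper's, but the execution diverges in scope. The paper's proof is essentially two lines: first, H\"older's inequality applied directly as
\[
\int_{\mathbb{R}^N}|x|^\alpha|\nabla v_i|^p\,dx
\le\Bigl(\int_{\mathbb{R}^N}|x|^\alpha(|\nabla U|+\varepsilon_i|\nabla v_i|)^{p-2}|\nabla v_i|^2\,dx\Bigr)^{p/2}
\Bigl(\int_{\mathbb{R}^N}|x|^\alpha(|\nabla U|+\varepsilon_i|\nabla v_i|)^{p}\,dx\Bigr)^{1-p/2}
\]
gives the uniform $\mathcal{D}^{1,p}_\alpha$ bound (your splitting argument also works); second, the change of variables $r=s^{t}$, $t=p/(p+\beta-\alpha)$, transforms both the hypothesis and the conclusion into the one--dimensional $s^{K-1}$--weighted quantities, after which the paper simply \emph{cites} \cite[Lemma~3.4]{FZ22} for radial functions. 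That is the whole proof.

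You carry out the same change of variables and even point to Figalli--Zhang, but then attempt to reprove their Lemma~3.4 rather than invoke it. Your reduction to $v_i\to v$ in $L^2_{\beta,*}(\mathbb{R}^N)$ via the pointwise bound (valid because $p^*_{\alpha,\beta}\le2$) and generalized dominated convergence is a correct sufficient condition. The concern is in the ``good/bad'' decomposition. Your proposed use of Proposition~\ref{propcet} on $G_i=\{|\varepsilon_i v_i|\le U\}$ is not directly justified: that compact embedding requires a uniform $\mathcal{D}^{1,2}_{\alpha,*}$ bound, i.e.\ control of $\int|x|^\alpha|\nabla U|^{p-2}|\nabla v_i|^2$, whereas the hypothesis only bounds $\int|x|^\alpha(|\nabla U|+\varepsilon_i|\nabla v_i|)^{p-2}|\nabla v_i|^2$, and for $p<2$ the latter weight is \emph{smaller}. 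The function--level set $G_i$ gives no control on where $\varepsilon_i|\nabla v_i|$ is large relative to $|\nabla U|$, so neither $v_i$ nor its truncation is known to lie in a bounded set of $\mathcal{D}^{1,2}_{\alpha,*}$. Figalli--Zhang's actual argument uses a finer (gradient--level) splitting and additional Hardy--type decay estimates; you are right to flag this as ``the main obstacle,'' but as written the sketch does not close. The cleanest fix is exactly the paper's: after the change of variables, quote \cite[Lemma~3.4]{FZ22} and be done.
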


    \begin{proof}
    Observe that, since $1<p<2$, by H\"{o}lder inequality we have
    \begin{small}
    \begin{align*}
    \int_{\mathbb{R}^N}|x|^{\alpha}|\nabla v_i|^pdx
    \leq & \left(\int_{\mathbb{R}^N}|x|^{\alpha}
    (|\nabla U|+\varepsilon_i|\nabla v_i|)^{p-2}|\nabla v_i|^2dx \right)^{\frac{p}{2}}
    \left(\int_{\mathbb{R}^N}|x|^{\alpha}
    (|\nabla U|+\varepsilon_i|\nabla v_i|)^{p}dx \right)^{1-\frac{p}{2}} \\
    \geq & C(N,p,\alpha,\beta)\left(\int_{\mathbb{R}^N}|x|^{\alpha}
    (|\nabla U|+\varepsilon_i|\nabla v_i|)^{p-2}|\nabla v_i|^2dx \right)^{\frac{p}{2}}
    \\ &\times
    \left(1+\varepsilon_i^p\int_{\mathbb{R}^N}|x|^{\alpha}|\nabla v_i|^{p}dx \right)^{1-\frac{p}{2}},
    \end{align*}
    \end{small}
    that combined with \eqref{propcetll} gives
    \begin{small}
    \begin{align*}
    \left(\int_{\mathbb{R}^N}|x|^{\alpha}|\nabla v_i|^pdx\right)^{\frac{2}{p}}
    \leq & C(N,p,\alpha,\beta)\int_{\mathbb{R}^N}|x|^{\alpha}
    (|\nabla U|+\varepsilon_i|\nabla v_i|)^{p-2}|\nabla v_i|^2dx
    \leq C(N,p,\alpha,\beta).
    \end{align*}
    \end{small}
    Thus, up to a subsequence, $v_i$ convergence weakly in $\mathcal{D}^{1,p}_{\alpha}(\mathbb{R}^N)$ and also a.e. to some $v\in\mathcal{D}^{1,p}_{\alpha,r}(\mathbb{R}^N)$. Hence, to conclude the proof, we need to show the validity of \eqref{propcetlc}.

    Since $\{v_i\}$ are radial, let $v_i(r)=w_i(s)$, where $r=s^{t}$ with $t=\frac{p}{p+\beta-\alpha}$, then
    \begin{align*}
    \int_{\mathbb{R}^N}|x|^{\beta}
    \frac{(U+C_1|\varepsilon_iv_i|)^{p^*_{\alpha,\beta}}}
    {U^2+|\varepsilon_iv_i|^2}|v_i|^2dx
    = & \omega_{N-1}t\int^\infty_0 \frac{(V(s)+C_1|\varepsilon_iw_i(s)|)^{\frac{Kp}{K-p}}}
    {V^2(s)+|\varepsilon_iw_i(s)|^2}|w_i(s)|^2s^{K-1}ds,
    \end{align*}
    and
    \begin{align*}
    \int_{\mathbb{R}^N}|x|^{\beta}
    U^{p^*_{\alpha,\beta}-2}|v|^2dx
    = & \omega_{N-1}t\int^\infty_0 V^{\frac{Kp}{K-p}-2}|w(s)|^2s^{K-1}ds,
    \end{align*}
    where $K=\frac{p(N+\beta)}{p+\beta-\alpha}>p$ and $V(s)=U(r)$.
    From \cite[Lemma 3.4]{FZ22} for the radial case, it holds that
    \begin{align*}
    \int^\infty_0 \frac{(V(s)+C_1|\varepsilon_iw_i(s)|)^{\frac{Kp}{K-p}}}
    {V^2(s)+|\varepsilon_iw_i(s)|^2}|w_i(s)|^2s^{K-1}ds
     \to & \int^\infty_0 V^{\frac{Kp}{K-p}-2}|w(s)|^2s^{K-1}ds,
    \end{align*}
    thus \eqref{propcetlc} holds.
    \end{proof}

    An important consequence of Lemma \ref{propcetl} is the following weighted Orlicz-type Poincar\'{e} inequality:
    \begin{corollary}\label{propcetlpi}
    Let $1<p\leq \frac{2(N+\alpha)}{N+2+\beta}$ and $p-N<\alpha<p+\beta$. There exists $\varepsilon_0=\varepsilon_0(N,p,\alpha,\beta)>0$ small such that the following holds:
    For any $\varepsilon \in (0,\varepsilon_0)$ and any radial function $v\in \mathcal{D}^{1,p}_{\alpha,r}(\mathbb{R}^N)\cap \mathcal{D}^{1,2}_{\alpha,*}(\mathbb{R}^N)$ satisfying
    \begin{align*}
    & \int_{\mathbb{R}^N}|x|^{\alpha}
    (|\nabla U|+\varepsilon|\nabla v|)^{p-2}|\nabla v|^2dx\leq 1,
    \end{align*}
    we have
    \begin{align}\label{propcetlcpi}
    \int_{\mathbb{R}^N}|x|^{\beta}
    (U+|\varepsilon v |)^{p^*_{\alpha,\beta}-2}|v |^2dx
    \leq C(N,p,\alpha,\beta)
    \int_{\mathbb{R}^N}|x|^{\alpha}
    (|\nabla U|+\varepsilon|\nabla v|)^{p-2}|\nabla v|^2dx.
    \end{align}
    \end{corollary}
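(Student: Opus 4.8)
The plan is a contradiction argument resting on the compactness statement of Lemma \ref{propcetl}, preceded by a renormalization that repairs the scale invariance broken by the term $\varepsilon|\nabla v|$. Assume the corollary fails; then for each $i\in\mathbb{N}^+$ there are $\varepsilon_i\in(0,1/i)$ and a radial $v_i\in\mathcal{D}^{1,p}_{\alpha,r}(\mathbb{R}^N)\cap\mathcal{D}^{1,2}_{\alpha,*}(\mathbb{R}^N)$ with
\[
D_i:=\int_{\mathbb{R}^N}|x|^{\alpha}(|\nabla U|+\varepsilon_i|\nabla v_i|)^{p-2}|\nabla v_i|^2\,dx\leq 1,
\qquad
\int_{\mathbb{R}^N}|x|^{\beta}(U+|\varepsilon_i v_i|)^{p^*_{\alpha,\beta}-2}|v_i|^2\,dx>i\,D_i .
\]
Here necessarily $D_i>0$ (if $D_i=0$ then $\nabla v_i=0$ a.e., so $v_i\equiv 0$ in $\mathcal{D}^{1,p}_{\alpha}(\mathbb{R}^N)$ and no counterexample occurs), so I would set $\widehat v_i:=v_i/\sqrt{D_i}$ and $\widehat\varepsilon_i:=\varepsilon_i\sqrt{D_i}\in(0,1)$, which gives $\varepsilon_i v_i=\widehat\varepsilon_i\widehat v_i$ and $|\nabla U|+\varepsilon_i|\nabla v_i|=|\nabla U|+\widehat\varepsilon_i|\nabla\widehat v_i|$. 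A direct computation then yields
\[
\int_{\mathbb{R}^N}|x|^{\alpha}(|\nabla U|+\widehat\varepsilon_i|\nabla\widehat v_i|)^{p-2}|\nabla\widehat v_i|^2\,dx=1,
\qquad
\int_{\mathbb{R}^N}|x|^{\beta}(U+|\widehat\varepsilon_i\widehat v_i|)^{p^*_{\alpha,\beta}-2}|\widehat v_i|^2\,dx>i ,
\]
while $\widehat\varepsilon_i\leq\varepsilon_i<1/i\to 0$. Renaming $(\widehat v_i,\widehat\varepsilon_i)$ as $(v_i,\varepsilon_i)$, we are left with $\varepsilon_i\to 0$ in $(0,1)$ and radial $v_i$ satisfying \eqref{propcetll} with equality and $\int_{\mathbb{R}^N}|x|^{\beta}(U+|\varepsilon_i v_i|)^{p^*_{\alpha,\beta}-2}|v_i|^2\,dx\to\infty$.

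To reach a contradiction I would apply Lemma \ref{propcetl}: along a subsequence $v_i\rightharpoonup v$ in $\mathcal{D}^{1,p}_{\alpha}(\mathbb{R}^N)$ for some $v\in\mathcal{D}^{1,p}_{\alpha,r}(\mathbb{R}^N)\cap L^2_{\beta,*}(\mathbb{R}^N)$, and \eqref{propcetlc} applied with $C_1=1$ gives
\[
\int_{\mathbb{R}^N}|x|^{\beta}\frac{(U+|\varepsilon_i v_i|)^{p^*_{\alpha,\beta}}}{U^2+|\varepsilon_i v_i|^2}|v_i|^2\,dx
\;\longrightarrow\;
\int_{\mathbb{R}^N}|x|^{\beta}U^{p^*_{\alpha,\beta}-2}|v|^2\,dx=\|v\|_{L^2_{\beta,*}(\mathbb{R}^N)}^2<\infty .
\]
On the other hand, because $U>0$ and $s^2+t^2\leq(s+t)^2$ for all $s,t\geq 0$, one has the pointwise inequality
\[
(U+|\varepsilon_i v_i|)^{p^*_{\alpha,\beta}-2}=\frac{(U+|\varepsilon_i v_i|)^{p^*_{\alpha,\beta}}}{(U+|\varepsilon_i v_i|)^2}\leq\frac{(U+|\varepsilon_i v_i|)^{p^*_{\alpha,\beta}}}{U^2+|\varepsilon_i v_i|^2},
\]
hence integrating against $|x|^{\beta}|v_i|^2$ shows that $\int_{\mathbb{R}^N}|x|^{\beta}(U+|\varepsilon_i v_i|)^{p^*_{\alpha,\beta}-2}|v_i|^2\,dx$ stays bounded, which contradicts the divergence just obtained. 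Tracing back the renormalization, this establishes \eqref{propcetlcpi}.

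All the analytic substance sits inside Lemma \ref{propcetl} — and, through it, in the radial reduction together with \cite[Lemma 3.4]{FZ22} — so within this proof the one point deserving care is the renormalization, which is forced by the non-homogeneity of the functionals in $v$ caused by the $\varepsilon|\nabla v|$ term; after it, one must simply check that $\widehat\varepsilon_i$ still belongs to $(0,1)$ and tends to $0$, so that Lemma \ref{propcetl} applies to the rescaled sequence. The elementary bound $(U+|\varepsilon_i v_i|)^2\geq U^2+|\varepsilon_i v_i|^2$ then closes the argument, and in fact requires no condition on $p^*_{\alpha,\beta}$; the restriction $1<p\leq\frac{2(N+\alpha)}{N+2+\beta}$ enters only to make Lemma \ref{propcetl} available.
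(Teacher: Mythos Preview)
Your proof is correct, but it takes a different route from the paper. The paper simply performs the radial change of variables $r=s^{p/(p+\beta-\alpha)}$ to transform both sides of \eqref{propcetlcpi} into their unweighted analogues in ``dimension'' $K=\frac{p(N+\beta)}{p+\beta-\alpha}$, and then invokes \cite[Corollary 3.5]{FZ22} directly. You instead run a contradiction argument built on Lemma~\ref{propcetl}: after the rescaling $\widehat v_i=v_i/\sqrt{D_i}$, $\widehat\varepsilon_i=\varepsilon_i\sqrt{D_i}$ (which correctly normalizes the constraint to equality while keeping $\widehat\varepsilon_i\to 0$), the compactness statement \eqref{propcetlc} with $C_1=1$ forces the would-be divergent integrals to stay bounded, via the elementary pointwise bound $(U+|\varepsilon_i v_i|)^2\geq U^2+|\varepsilon_i v_i|^2$. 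This is essentially the argument behind \cite[Corollary 3.5]{FZ22} itself, so what you have done is unwrap the paper's citation and carry it out in the weighted setting using only Lemma~\ref{propcetl}. The payoff is a proof that is self-contained once Lemma~\ref{propcetl} is in place and that makes transparent why the Orlicz-type Poincar\'e inequality is a consequence of compactness; the paper's version is shorter but leans on an extra external reference.
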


    \begin{proof}
    Since $v\in \mathcal{D}^{1,p}_{\alpha,r}(\mathbb{R}^N)$ is radial, let $v(r)=w(s)$, where $|x|=r=s^{t}$ with $t=\frac{p}{p+\beta-\alpha}$, then
    \begin{align*}
    \int_{\mathbb{R}^N}|x|^{\beta}
    (U+|\varepsilon v |)^{p^*_{\alpha,\beta}-2}|v |^2dx
    = & \omega_{N-1}t\int^\infty_0
    (V(s)+|\varepsilon w(s)|)^{\frac{Kp}{K-p}-2}|w(s)|^2s^{K-1}ds,
    \end{align*}
    and
    \begin{align*}
    \int_{\mathbb{R}^N}|x|^{\alpha}
    (|\nabla U|+\varepsilon|\nabla v|)^{p-2}|\nabla v|^2dx
    = & \omega_{N-1}t^{1-p}\int^\infty_0 (|V'(s)|+\varepsilon|w'(s)|)^{p-2}|w'(s)|^2s^{K-1}ds,
    \end{align*}
    where where $\omega_{N-1}$ is the surface area for unit ball of $\mathbb{R}^N$, $K=\frac{p(N+\beta)}{p+\beta-\alpha}>p$ and $V(s)=U(r)$.
    From \cite[Corollary 3.5]{FZ22} for the radial case,  
    we deduce \eqref{propcetlcpi} directly.  
    \end{proof}

    We are going to give the following spectral gap-type estimate.
    \begin{lemma}\label{lemsgap2}
    Let $1<p<2\leq N$ and $p-N<\alpha<p+\beta$. Given any $\gamma_0>0$, $C_1>0$ there exists $\overline{\delta}=\overline{\delta}(N,p,\alpha,\beta,\gamma_0,C_1)>0$ such that for any function $v\in \mathcal{D}^{1,p}_{\alpha,r}(\mathbb{R}^N)$ orthogonal to $T_{U} \mathcal{M}$ in $L^2_{\beta,*}(\mathbb{R}^N)$ satisfying $\|v\|\leq \overline{\delta}$, the following holds:
    \begin{itemize}
    \item[$(i)$]
    when $1<p\leq\frac{2(N+\alpha)}{N+2+\beta}$, we have
    \begin{small}
    \begin{align*}
    & \int_{\mathbb{R}^N}|x|^{\alpha}\left[
    |\nabla U|^{p-2}|\nabla v|^2
    +(p-2)|\omega|^{p-2}(|\nabla (U+v)|-|\nabla U|)^2
    +\gamma_0 \min\{|\nabla v|^p,|\nabla U|^{p-2}|\nabla v|^2\}
    \right]dx \\
    \geq & \left[(p^*_{\alpha,\beta}-1)+\tau\right]
    \int_{\mathbb{R}^N}|x|^{\beta}\frac{(U+C_1|v|)^{p^*_{\alpha,\beta}}}{U^2+|v|^2}|v|^2dx;
    \end{align*}
    \end{small}
    \item[$(ii)$]
    when $\frac{2(N+\alpha)}{N+2+\beta}< p<2$, we have
    \begin{small}
    \begin{align*}
    & \int_{\mathbb{R}^N}|x|^{\alpha}\left[
    |\nabla U|^{p-2}|\nabla v|^2
    +(p-2)|\omega|^{p-2}(|\nabla (U+v)|-|\nabla U|)^2
    +\gamma_0 \min\{|\nabla v|^p,|\nabla U|^{p-2}|\nabla v|^2\}
    \right]dx \\
    \geq & \left[(p^*_{\alpha,\beta}-1)+\tau\right]
    \int_{\mathbb{R}^N}|x|^{\beta}U^{p^*_{\alpha,\beta}-2}|v|^2dx,
    \end{align*}
    \end{small}
    \end{itemize}
    where $\tau>0$ is given in Proposition \ref{propevl}, and $\omega: \mathbb{R}^{2N}\to \mathbb{R}^N$ is defined in analogy to Lemma \ref{lemui1p}:
    \begin{eqnarray*}
    \omega=\omega(\nabla U,\nabla (U+v))=
    \left\{ \arraycolsep=1.5pt
       \begin{array}{ll}
        \left(\frac{|\nabla (U+v)|}{(2-p)|\nabla (U+v)|+(p-1)|\nabla U|}\right)^{\frac{1}{p-2}}\nabla U,\ \ &{\rm if}\ \  |\nabla U|<|\nabla (U+v)|\\[3mm]
        \nabla U,\ \ &{\rm if}\ \ |\nabla (U+v)|\leq |\nabla U|
        \end{array}.
    \right.
    \end{eqnarray*}
    \end{lemma}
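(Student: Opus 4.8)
The plan is to argue by contradiction, adapting the blow‑up scheme of Lemma \ref{lemsgap} but replacing the weighted Sobolev compactness by the refined statements of Lemma \ref{propcetl} and Corollary \ref{propcetlpi}. If the claim fails (for some fixed $\gamma_0,C_1>0$), we obtain a sequence $0\not\equiv v_i\to 0$ in $\mathcal{D}^{1,p}_{\alpha,r}(\mathbb{R}^N)$, each orthogonal to $T_U\mathcal{M}$ in $L^2_{\beta,*}(\mathbb{R}^N)$, violating the corresponding inequality. In case $(i)$ I would normalise by the ``Orlicz gradient'': set $\varepsilon_i:=\big(\int_{\mathbb{R}^N}|x|^{\alpha}(|\nabla U|+|\nabla v_i|)^{p-2}|\nabla v_i|^2dx\big)^{1/2}$ and $\widehat{v}_i:=v_i/\varepsilon_i$, so that $\int_{\mathbb{R}^N}|x|^{\alpha}(|\nabla U|+\varepsilon_i|\nabla\widehat{v}_i|)^{p-2}|\nabla\widehat{v}_i|^2dx=1$; since $1<p<2$ gives $(|\nabla U|+|\nabla v_i|)^{p-2}\le|\nabla v_i|^{p-2}$, we have $\varepsilon_i^2\le\int_{\mathbb{R}^N}|x|^{\alpha}|\nabla v_i|^pdx\to 0$, hence $\varepsilon_i\to 0$ and $\varepsilon_i\in(0,1)$ for $i$ large. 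In case $(ii)$, where $p^*_{\alpha,\beta}>2$, I would instead normalise by $\|v_i\|_{\mathcal{D}^{1,2}_{\alpha,*}(\mathbb{R}^N)}$ and use Proposition \ref{propcet} in place of Lemma \ref{propcetl}, as in \cite{FZ22}; below I describe only case $(i)$, which is the delicate one.

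By Lemma \ref{propcetl}, up to a subsequence $\widehat{v}_i\rightharpoonup\widehat{v}$ in $\mathcal{D}^{1,p}_{\alpha}(\mathbb{R}^N)$ with $\widehat{v}\in\mathcal{D}^{1,p}_{\alpha,r}(\mathbb{R}^N)\cap L^2_{\beta,*}(\mathbb{R}^N)$, and (with $C_1$ as in the statement) $\int_{\mathbb{R}^N}|x|^{\beta}\frac{(U+C_1|v_i|)^{p^*_{\alpha,\beta}}}{U^2+|v_i|^2}|\widehat{v}_i|^2dx\to\int_{\mathbb{R}^N}|x|^{\beta}U^{p^*_{\alpha,\beta}-2}|\widehat{v}|^2dx$. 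The key elementary fact I would use is the pointwise bound, valid for $1<p<2$, that $(|\nabla U|+|\nabla v_i|)^{p-2}|\nabla v_i|^2\le\min\{|\nabla v_i|^p,\,|\nabla U|^{p-2}|\nabla v_i|^2\}$ (split according to whether $|\nabla v_i|\le|\nabla U|$ or not). Dividing the negated inequality by $\varepsilon_i^2$ and discarding the first two (jointly nonnegative, by the note in Lemma \ref{lemui1p}) terms, this bound gives $\gamma_0\le[(p^*_{\alpha,\beta}-1)+\tau]\int_{\mathbb{R}^N}|x|^{\beta}\frac{(U+C_1|v_i|)^{p^*_{\alpha,\beta}}}{U^2+|v_i|^2}|\widehat{v}_i|^2dx$, hence in the limit $\int_{\mathbb{R}^N}|x|^{\beta}U^{p^*_{\alpha,\beta}-2}|\widehat{v}|^2dx>0$, so $\widehat{v}\not\equiv 0$. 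Moreover $\widehat{v}$ is orthogonal to $T_U\mathcal{M}$: each $\xi\in T_U\mathcal{M}$ has $U^{p^*_{\alpha,\beta}-2}\xi\in L_\beta^{p^*_{\alpha,\beta}/(p^*_{\alpha,\beta}-1)}(\mathbb{R}^N)$ while $\widehat{v}_i\rightharpoonup\widehat{v}$ in $L_\beta^{p^*_{\alpha,\beta}}(\mathbb{R}^N)$, so one may pass to the limit in $\int_{\mathbb{R}^N}|x|^{\beta}U^{p^*_{\alpha,\beta}-2}\xi\widehat{v}_idx=0$.

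The rest follows the annulus argument of Lemma \ref{lemsgap}. Fix $R>1$ and write $B(0,R)\setminus B(0,1/R)=\mathcal{R}_{i,R}\cup\mathcal{S}_{i,R}$ with $\mathcal{R}_i=\{2|\nabla U|\ge|\nabla v_i|\}$, $\mathcal{S}_i=\{2|\nabla U|<|\nabla v_i|\}$. On $\mathcal{S}_i$ one has $\min\{|\nabla v_i|^p,|\nabla U|^{p-2}|\nabla v_i|^2\}=|\nabla v_i|^p$, so the $\gamma_0$‑term together with the negated inequality (whose right‑hand side is bounded, by the convergence above) gives $\varepsilon_i^{-2}\int_{\mathcal{S}_{i,R}}|x|^{\alpha}|\nabla v_i|^pdx\le C$; since $|\nabla U|$ is bounded below on the annulus this forces $|\mathcal{S}_{i,R}|\le C(R)\varepsilon_i^2\to 0$. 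On $\mathcal{R}_{i,R}$, where $c(R)\le|\nabla U|\le C(R)$, $\int_{\mathcal{R}_{i,R}}|x|^{\alpha}|\nabla U|^{p-2}|\nabla\widehat{v}_i|^2dx$ is bounded, so with $\varphi_i:=\widehat{v}_i-\widehat{v}$ one has $\nabla\varphi_i\chi_{\mathcal{R}_i}\rightharpoonup 0$ locally in $L^2(\mathbb{R}^N\setminus\{0\})$. Using $|\omega_i|\to|\nabla U|$ a.e. and $|\omega_i|^{p-2}\le C(p)|\nabla U|^{p-2}$, rewriting $(|\nabla(U+v_i)|-|\nabla U|)/\varepsilon_i=\big(\int_0^1\frac{\nabla U+t\nabla v_i}{|\nabla U+t\nabla v_i|}dt\big)\cdot\nabla\widehat{v}_i$, treating cross terms by dominated convergence and the squares by weak lower semicontinuity, then dropping the nonnegative $\gamma_0$‑term and letting $R\to\infty$ (the integrand being nonnegative), I would get that the limit of the left‑hand side dominates $\int_{\mathbb{R}^N}|x|^{\alpha}\big[|\nabla U|^{p-2}|\nabla\widehat{v}|^2+(p-2)|\nabla U|^{p-2}(\tfrac{\nabla U\cdot\nabla\widehat{v}}{|\nabla U|})^2\big]dx$. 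Combined with the right‑hand side limit this yields $\int_{\mathbb{R}^N}|x|^{\alpha}\big[|\nabla U|^{p-2}|\nabla\widehat{v}|^2+(p-2)|\nabla U|^{p-2}(\tfrac{\nabla U\cdot\nabla\widehat{v}}{|\nabla U|})^2\big]dx\le[(p^*_{\alpha,\beta}-1)+\tau]\int_{\mathbb{R}^N}|x|^{\beta}U^{p^*_{\alpha,\beta}-2}|\widehat{v}|^2dx$, contradicting Proposition \ref{propevl} since $\widehat{v}\not\equiv 0$ is orthogonal to $T_U\mathcal{M}$.

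The hard part is the compactness in case $(i)$: because $1<p<2$ the weight $(|\nabla U|+\varepsilon_i|\nabla\widehat{v}_i|)^{p-2}$ is not uniformly comparable with $|\nabla U|^{p-2}$ and, moreover, $|x|^{\alpha}|\nabla U|^{p-2}\notin L^1_{\mathrm{loc}}$ near the origin, so the compact embedding $\mathcal{D}^{1,2}_{\alpha,*}(\mathbb{R}^N)\hookrightarrow\hookrightarrow L^2_{\beta,*}(\mathbb{R}^N)$ cannot be used directly; this is precisely what Lemma \ref{propcetl} and the Orlicz‑type Poincaré inequality of Corollary \ref{propcetlpi} are designed to handle. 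It is also why the term $\gamma_0\min\{|\nabla v|^p,|\nabla U|^{p-2}|\nabla v|^2\}$ appears in the statement: it is what lets one (a) force $\widehat{v}\not\equiv 0$ and (b) show $|\mathcal{S}_{i,R}|\to 0$, both of which are needed to close the contradiction.
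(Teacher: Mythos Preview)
Your treatment of case $(i)$ is essentially the paper's proof, with one pleasant shortcut: to force $\widehat{v}\not\equiv 0$ you use the pointwise bound $(|\nabla U|+|\nabla v_i|)^{p-2}|\nabla v_i|^2\le\min\{|\nabla v_i|^p,\,|\nabla U|^{p-2}|\nabla v_i|^2\}$ directly, whereas the paper first passes through the lower bound \eqref{evbcb2i} (i.e.\ \cite[(2.2)]{FZ22}) on the $\mathcal{R}_i/\mathcal{S}_i$ splitting together with Corollary~\ref{propcetlpi} to control the right-hand side. The annulus argument and the passage to the limit are otherwise the same.

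There is, however, a gap in your sketch of case $(ii)$. You propose to normalise by $\|v_i\|_{\mathcal{D}^{1,2}_{\alpha,*}(\mathbb{R}^N)}$ and invoke Proposition~\ref{propcet}, mirroring Lemma~\ref{lemsgap}. But that step in Lemma~\ref{lemsgap} relied on $p\ge 2$, which yields via H\"older that $\|v\|_{\mathcal{D}^{1,2}_{\alpha,*}}\le C\|v\|$. For $1<p<2$ the weight $|\nabla U|^{p-2}$ is \emph{singular} where $\nabla U$ is small, and a generic $v_i\in\mathcal{D}^{1,p}_{\alpha,r}(\mathbb{R}^N)$ need not satisfy $\int_{\mathbb{R}^N}|x|^{\alpha}|\nabla U|^{p-2}|\nabla v_i|^2\,dx<\infty$; so your normalisation and the use of Proposition~\ref{propcet} are not justified. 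The paper does \emph{not} change normalisation in case $(ii)$: it keeps the same Orlicz quantity $\varepsilon_i=\big(\int_{\mathbb{R}^N}|x|^{\alpha}(|\nabla U|+|\nabla v_i|)^{p-2}|\nabla v_i|^2\,dx\big)^{1/2}$, then uses H\"older to show $\int_{\mathbb{R}^N}|x|^{\alpha}|\nabla\widehat{v}_i|^p\,dx\le C$, and (since here $p^*_{\alpha,\beta}>2$) a further H\"older--Sobolev tail estimate to upgrade the local $L^2$ convergence to $\widehat{v}_i\to\widehat{v}$ in $L^2_{\beta,*}(\mathbb{R}^N)$. After that, the annulus analysis is identical to case $(i)$. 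If you keep the Orlicz normalisation in case $(ii)$ and replace Proposition~\ref{propcet} by this H\"older--Sobolev argument, your outline becomes complete.
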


    \begin{proof}
    We argue by contradiction in these two cases.

    $\bullet$ {\em The case $1<p\leq\frac{2(N+\alpha)}{N+2+\beta}$} which implies $p^*_{\alpha,\beta}\leq2$.
    Suppose the inequality does not hold, then there exists a sequence $0\not\equiv v_i\to 0$ in $\mathcal{D}^{1,p}_{\alpha,r}(\mathbb{R}^N)$, with $v_i$ orthogonal to $T_{U} \mathcal{M}$, such that
    \begin{small}
    \begin{align}\label{evbc2}
    & \int_{\mathbb{R}^N}|x|^{\alpha}\Big[
    |\nabla U|^{p-2}|\nabla v_i|^2
    +(p-2)|\omega_i|^{p-2}(|\nabla (U+v_i)|-|\nabla U|)^2
    \nonumber\\& \quad \quad +\gamma_0 \min\{|\nabla v_i|^p+|\nabla U|^{p-2}|\nabla v_i|^2\}
    \Big]dx \nonumber\\
     <  & \left[(p^*_{\alpha,\beta}-1)+\tau\right]
    \int_{\mathbb{R}^N}|x|^{\beta}\frac{(U+C_1|v_i|)^{p^*_{\alpha,\beta}}}{U^2+|v_i|^2}|v_i|^2dx,
    \end{align}
    \end{small}
    where $\omega_i$ corresponds to $v_i$ as in the statement.
    Let
    \begin{align}\label{defevh}
    \varepsilon_i:=\left(\int_{\mathbb{R}^N}|x|^\alpha(|\nabla U|+|\nabla v_i|)^{p-2}|\nabla v_i|^2 dx\right)^{\frac{1}{2}},\quad \widehat{v}_i=\frac{v_i}{\varepsilon_i}.
    \end{align}
    Note that, since $1<p<2$, it follows by H\"{o}lder inequality that
    \begin{align*}
    \int_{\mathbb{R}^N}|x|^\alpha(|\nabla U|+|\nabla v_i|)^{p-2}|\nabla v_i|^2 dx\leq & \int_{\mathbb{R}^N}|x|^{\alpha}|\nabla U|^{p-2}|\nabla v_i|^2dx \\
    \leq & \left(\int_{\mathbb{R}^N}|x|^{\alpha}|\nabla U|^{p}dx\right)^{1-\frac{p}{2}}
    \left(\int_{\mathbb{R}^N}|x|^{\alpha}|\nabla v_i|^{p}dx\right)^{\frac{p}{2}}
    \\ \to &0,
    \end{align*}
    hence $\varepsilon_i\to 0$, as $i\to \infty$.

    Fix $R>1$ which can be chosen arbitrarily large, set
    \begin{align}\label{defrsi}
    \mathcal{R}_i:=\{2|\nabla U|\geq |\nabla v_i|\}&,\quad \mathcal{S}_i:=\{2|\nabla U|< |\nabla v_i|\},  \nonumber\\
    \mathcal{R}_{i,R}:=\left(B(0,R)\backslash B(0,1/R)\right)\cap \mathcal{R}_i&,\quad
    \mathcal{S}_{i,R}:=\left(B(0,R)\backslash B(0,1/R)\right)\cap \mathcal{S}_i,
    \end{align}
    thus $B(0,R)\backslash B(0,1/R)=\mathcal{R}_{i,R} \cup \mathcal{S}_{i,R}$.
    Since the integrand in the left hand side of \eqref{evbc2} is nonnegative, we have
    \begin{align}\label{evbcb2}
    & \int_{B(0,R)\backslash B(0,1/R)}|x|^{\alpha}\bigg[
    |\nabla U|^{p-2}|\nabla \widehat{v}_i|^2
    +(p-2)|\omega_i|^{p-2}\left(\frac{|\nabla (U+v_i)|-|\nabla U|}{\varepsilon_i}\right)^2 \nonumber\\
    & \quad\quad + \gamma_0 \min\{\varepsilon_i^{p-2}|\nabla \widehat{v}_i|^p,|\nabla U|^{p-2}|\nabla \widehat{v}_i|^2\}
    \bigg]dx \nonumber\\
    < & \left[(p^*_{\alpha,\beta}-1)+\tau\right]
    \int_{\mathbb{R}^N}|x|^{\beta}\frac{(U+C_1|v_i|)^{p^*_{\alpha,\beta}}}{U^2+|v_i|^2}|\widehat{v}_i|^2dx.
    \end{align}
    From \cite[(2.2)]{FZ22}, that is, for $1<p<2$, there exists $c(p)>0$ such that
    \begin{equation}\label{evbcb2i}
    p|x|^{p-2}|y|^2+p(p-2)|\omega|^{p-2}(|x|-|x+y|)^2\geq c(p)\frac{|x|}{|x|+|y|}|x|^{p-2}|y|^2,\quad \forall x\neq 0,\forall y\in\mathbb{R}^N,
    \end{equation}
    we have
    \begin{align*}
    & \left|\frac{\nabla U}{\varepsilon_i}\right|^{p-2}|\nabla \widehat{v}_i|^2
    +(p-2)\left|\frac{\omega_i}{\varepsilon_i}\right|^{p-2}\left(\left|\frac{\nabla U }{\varepsilon_i}+\nabla \widehat{v}_i\right|-\left|\frac{\nabla  U}{\varepsilon_i}\right|\right)^2 \\
    \geq & c(p)\frac{|\nabla U|/\varepsilon_i}{|\nabla U|/\varepsilon_i+|\nabla \widehat{v}_i|}\left|\frac{\nabla  U}{\varepsilon_i}\right|^{p-2}|\nabla \widehat{v}_i|^2,
    \end{align*}
    then,
    \[
    \left| \nabla U \right|^{p-2}|\nabla \widehat{v}_i|^2
    +(p-2)\left| \omega_i \right|^{p-2}\left(\frac{|\nabla (U+v_i)|-|\nabla  U| }{\varepsilon_i}\right)^2
     \geq c(p) \left| \nabla  U \right|^{p-2}|\nabla \widehat{v}_i|^2,\quad \mbox{in}\quad \mathcal{R}_{i,R}.
    \]
    Therefore, combining this bound with \eqref{evbcb2}, we obtain
    \begin{align}\label{evbcb2l}
    & c(p)\int_{\mathcal{R}_{i,R}}|x|^\alpha\left| \nabla  U \right|^{p-2}|\nabla \widehat{v}_i|^2 dx
    +\gamma_0\varepsilon^{p-2}_i\int_{\mathcal{S}_{i,R}}|x|^\alpha|\nabla \widehat{v}_i|^p dx \nonumber\\
    \leq & \int_{B(0,R)\backslash B(0,1/R)}|x|^{\alpha}\bigg[
    |\nabla U|^{p-2}|\nabla \widehat{v}_i|^2
    +(p-2)|\omega_i|^{p-2}\left(\frac{|\nabla (U+v_i)|-|\nabla U|}{\varepsilon_i}\right)^2 \nonumber\\
    & \quad\quad + \gamma_0 \min\{\varepsilon_i^{p-2}|\nabla \widehat{v}_i|^p,|\nabla U|^{p-2}|\nabla \widehat{v}_i|^2\}
    \bigg]dx \nonumber\\
    < & \left[(p^*_{\alpha,\beta}-1)+\tau\right]
    \int_{\mathbb{R}^N}|x|^{\beta}\frac{(U+C_1|v_i|)^{p^*_{\alpha,\beta}}}{U^2+|v_i|^2}|\widehat{v}_i|^2dx.
    \end{align}
    In particular, this implies that
    \begin{align}\label{evbcb2li}
    1= & \varepsilon^{-2}_i\int_{\mathbb{R}^N}|x|^\alpha(|\nabla U|+|\nabla v_i|)^{p-2}|\nabla v_i|^2 dx \nonumber\\
    \leq & C(p)\left[\int_{\mathcal{R}_{i}}|x|^\alpha\left| \nabla  U \right|^{p-2}|\nabla \widehat{v}_i|^2 dx
    +\varepsilon^{p-2}_i\int_{\mathcal{S}_{i}}|x|^\alpha|\nabla \widehat{v}_i|^p dx\right] \nonumber\\
    \leq & C(N,p,\gamma_0)\left[(p^*_{\alpha,\beta}-1)+\tau\right]
    \int_{\mathbb{R}^N}|x|^{\beta}\frac{(U+C_1|v_i|)^{p^*_{\alpha,\beta}}}{U^2+|v_i|^2}|\widehat{v}_i|^2dx.
    \end{align}
    Furthermore, thanks to \eqref{propcetlcpi} in Corollary \ref{propcetlpi}, for $i$ large enough so that $\varepsilon_i$ small we have
    \begin{align}\label{evbcb2lib}
    \int_{\mathbb{R}^N}|x|^{\beta}
    \frac{(U+C_1|v_i|)^{p^*_{\alpha,\beta}}}{U^2+|v_i|^2}|\widehat{v}_i|^2dx
    \leq & C(N,p,C_1)\int_{\mathbb{R}^N}|x|^{\beta}
    (U+|v_i|)^{p^*_{\alpha,\beta}-2}|\widehat{v}_i|^2dx \nonumber\\
    \leq & C(N,p,\alpha,\beta,C_1)\int_{\mathbb{R}^N}|x|^{\alpha}
    (|\nabla U|+|\nabla v_i|)^{p-2}|\nabla \widehat{v}_i|^2dx \nonumber\\
    = & C(N,p,\alpha,\beta,C_1).
    \end{align}
    Hence, combining \eqref{evbcb2l} with \eqref{evbcb2lib}, by the definition of $\mathcal{S}_{i,R}$ we have
    \begin{equation*}
    \begin{split}
    \varepsilon^{-2}_i\int_{\mathcal{S}_{i,R}}|x|^\alpha|\nabla U|^p dx
    \leq \frac{\varepsilon^{-2}_i}{2^p}\int_{\mathcal{S}_{i,R}}|x|^\alpha|\nabla v_i|^p dx
    = \frac{\varepsilon^{p-2}_i}{2^p}\int_{\mathcal{S}_{i,R}}|x|^\alpha|\nabla \widehat{v}_i|^p dx
    \leq C(N,p,\alpha,\beta,C_1),
    \end{split}
    \end{equation*}
    then since
    \[
    0<c(R)\leq |\nabla U|\leq C(R)\quad \mbox{inside}\quad B(0,R)\backslash B(0,1/R),\quad \forall R>1,
    \]
    for some constants $c(R)\leq C(R)$ depending only on $R$, we conclude that
    \begin{equation}\label{csirt0}
    |\mathcal{S}_{i,R}|\to 0\quad \mbox{as}\quad i\to \infty,\quad \forall R>1.
    \end{equation}
    Now, from \eqref{defevh} we have
    \[
    \int_{\mathbb{R}^N}|x|^\alpha(|\nabla U|+\varepsilon_i|\nabla \widehat{v}_i|)^{p-2}|\nabla \widehat{v}_i|^2 dx\leq 1,
    \]
    then according to Lemma \ref{propcetl}, we deduce that $\widehat{v}_i\rightharpoonup \widehat{v}$ in $\mathcal{D}^{1,p}_\alpha(\mathbb{R}^N)$ for some $\widehat{v}\in \mathcal{D}^{1,p}_{\alpha,r}(\mathbb{R}^N)\cap L^2_{\beta,*}(\mathbb{R}^N)$, and
    \begin{equation}\label{csirtc}
    \int_{\mathbb{R}^N}|x|^{\beta}
    \frac{(U+C_1|v_i|)^{p^*_{\alpha,\beta}}}{U^2+|v_i|^2}|\widehat{v}_i|^2dx
    \to \int_{\mathbb{R}^N}|x|^{\beta}U^{p^*_{\alpha,\beta}-2}
    |\widehat{v}|^2dx,
    \end{equation}
    as $i\to \infty$, for any $C_1\geq 0$.
    Also, using \eqref{evbcb2l} and \eqref{evbcb2lib} again we have
    \begin{equation*}
    \int_{\mathcal{R}_{i,R}}|x|^\alpha\left| \nabla  U \right|^{p-2}|\nabla \widehat{v}_i|^2 dx
    \leq C(N,p,\alpha,\beta,C_1),
    \end{equation*}
    therefore \eqref{csirt0} and $\widehat{v}_i\rightharpoonup \widehat{v}$ in $\mathcal{D}^{1,p}_\alpha(\mathbb{R}^N)$ imply that, up to a subsequence,
    \[
    \widehat{v}_i \chi_{\mathcal{R}_{i,R}}\rightharpoonup \widehat{v} \chi_{B(0,R)\backslash B(0,1/R)}\quad \mbox{in}\quad \mathcal{D}^{1,2}_{\alpha,*}(\mathbb{R}^N),\quad \forall R>1.
    \]
    In addition, letting $i\to \infty$ in \eqref{evbcb2li} and \eqref{evbcb2lib}, and using \eqref{csirtc}, we deduce that
    \begin{equation}\label{csirtl1}
    0<c(N,p,\alpha,\beta,C_1,\gamma_0)
    \leq \|\widehat{v}\|_{L^2_{\beta,*}(\mathbb{R}^N)}
    \leq C(N,p,\alpha,\beta,C_1).
    \end{equation}
    Let us write
    \[
    \widehat{v}_i=\widehat{v}+\varphi_i,\quad\mbox{with}\quad \varphi_i:= \widehat{v}_i-\widehat{v},
    \]
    we have
    \[
    \varphi_i \rightharpoonup 0 \quad \mbox{in}\quad   \mathcal{D}^{1,p}_{\alpha}(\mathbb{R}^N)
    \quad \mbox{and}\quad \varphi_i \chi_{\mathcal{R}_{i}}\rightharpoonup 0
    \quad \mbox{locally in} \quad \mathcal{D}^{1,2}_{\alpha,*}(\mathbb{R}^N).
    \]
    We now look at the left side of \eqref{evbcb2}.
    The strong convergence $\equiv v_i\to 0$ in $\mathcal{D}^{1,p}_{\alpha}(\mathbb{R}^N)$ implies that, $|\omega_i|\to |\nabla U|$ a.e. in $\mathbb{R}^N$. Then, let us rewrite
    \begin{align*}
    \left(\frac{|\nabla (U+v_i)|-|\nabla U|}{\varepsilon_i}\right)^2
    = & \left(\left[\int^1_0\frac{\nabla U+ t\nabla v_i}{|\nabla U+ t\nabla v_i|}dt\right]\cdot \nabla \widehat{v}_i\right)^2 \\
    = & \left(\left[\int^1_0\frac{\nabla U+ t\nabla v_i}{|\nabla U+ t\nabla v_i|}dt\right]\cdot \nabla (\widehat{v}+\varphi_i)\right)^2.
    \end{align*}
    Hence, if we set
    \[
    f_{i,1}=\left[\int^1_0\frac{\nabla U+ t\nabla v_i}{|\nabla U+ t\nabla v_i|}dt\right]\cdot \nabla \widehat{v},\quad
    f_{i,2}=\left[\int^1_0\frac{\nabla U+ t\nabla v_i}{|\nabla U+ t\nabla v_i|}dt\right]\cdot \nabla \varphi_i,
    \]
    since $\frac{\nabla U+ t\nabla v_i}{|\nabla U+ t\nabla v_i|}\to \frac{\nabla U}{|\nabla U|}$ a.e., it follows from Lebesgue's dominated convergence theorem that
    \[
    f_{i,1}\to \frac{\nabla U}{|\nabla U|}\cdot\nabla \widehat{v}\quad \mbox{locally in}\quad L^2(\mathbb{R}^N\backslash\{0\}),\quad f_{i,2}\chi_{\mathcal{R}_i}\rightharpoonup 0\quad \mbox{locally in}\quad L^2(\mathbb{R}^N\backslash\{0\}).
    \]
    Thus, the left hand side of \eqref{evbcb2} from below as follows:
    \begin{align}\label{evbcbb2}
    & \int_{\mathcal{R}_{i,R}}|x|^{\alpha}\left[|\nabla U|^{p-2}|\nabla \widehat{v}_i|^2+(p-2)|\omega_i|^{p-2}\left(\frac{|\nabla (U+v_i)|-|\nabla U|}{\varepsilon_i}\right)^2\right]dx \nonumber\\
    = & \int_{\mathcal{R}_{i,R}}|x|^{\alpha}\left[|\nabla U|^{p-2}\left(|\nabla \widehat{v}|^2+2 \nabla \varphi_i\cdot \nabla \widehat{v}\right)+(p-2)|\omega_i|^{p-2}\left(f_{i,1}^2+2f_{i,1}f_{i,2}\right)\right]dx \nonumber\\
    & + \int_{\mathcal{R}_{i,R}}|x|^{\alpha}\left[|\nabla U|^{p-2} |\nabla \varphi_i|^2+(p-2)|\omega_i|^{p-2}f_{i,2}^2\right]dx
    \nonumber\\
    \geq & \int_{\mathcal{R}_{i,R}}|x|^{\alpha}\left[|\nabla U|^{p-2}\left(|\nabla \widehat{v}|^2+2 \nabla \varphi_i\cdot \nabla \widehat{v}\right)+(p-2)|\omega_i|^{p-2}\left(f_{i,1}^2+2f_{i,1}f_{i,2}\right)\right]dx,
    \end{align}
    where the last inequality follows from the nonnegativity of $\left[|\nabla U|^{p-2} |\nabla \varphi_i|^2+(p-2)|\omega_i|^{p-2}f_{i,2}^2\right]$ (thanks to Lemma \ref{lemui1p} and the fact that $f_{i,2}^2\leq |\nabla \varphi_i|^2$). Then, combining the convergence
    \begin{equation*}
    \begin{split}
    & \nabla \varphi_i \chi_{\mathcal{R}_i}\rightharpoonup 0,
    \quad f_{i,1}\to \frac{\nabla U}{|\nabla U|}\cdot\nabla \widehat{v},
    \quad f_{i,2}\chi_{\mathcal{R}_i}\rightharpoonup 0,
    \quad \mbox{locally in}\quad L^2(\mathbb{R}^N\backslash\{0\}),\\
    & |\omega_i|\to |\nabla U|\quad \mbox{a.e.}, \quad |(B(0,R)\backslash B(0,1/R))\backslash\mathcal{R}_{i,R}|=|\mathcal{S}_{i,R}|\to 0,
    \end{split}
    \end{equation*}
    with the fact that
    \[
    |\omega_i|^{p-2}\leq C(p)|\nabla U|^{p-2},
    \]
    by Lebesgue's dominated convergence theorem, we deduce that
    \begin{align*}%\label{evbcbbc}
    & \lim_{i\to \infty}\int_{\mathcal{R}_{i,R}}|x|^{\alpha}\left[|\nabla U|^{p-2}\left(|\nabla \widehat{v}|^2+2 \nabla \varphi_i\cdot \nabla \widehat{v}\right)+(p-2)|\omega_i|^{p-2}
    \left(f_{i,1}^2+2f_{i,1}f_{i,2}\right)\right]dx \\
    \to & \int_{B(0,R)\backslash B(0,1/R)}|x|^{\alpha}\left[|\nabla U|^{p-2}|\nabla \widehat{v} |^2+(p-2)|\nabla U|^{p-2}\left(\frac{\nabla U\cdot\nabla \widehat{v} }{| \nabla U|}\right)^2\right]dx,
    \end{align*}
    thus from \eqref{evbcbb2} we obtain
    \begin{align*}%\label{evbcbbc}
    & \liminf_{i\to \infty}\int_{\mathcal{R}_{i,R}}|x|^{\alpha}\left[|\nabla U|^{p-2}|\nabla \widehat{v}_i|^2+(p-2)|\omega_i|^{p-2}\left(\frac{|\nabla (U+v_i)|-|\nabla U|}{\varepsilon_i}\right)^2\right]dx \\
    \geq & \int_{B(0,R)\backslash B(0,1/R)}|x|^{\alpha}\left[|\nabla U|^{p-2}|\nabla \widehat{v} |^2+(p-2)|\nabla U|^{p-2}\left(\frac{\nabla U\cdot\nabla \widehat{v} }{| \nabla U|}\right)^2\right]dx.
    \end{align*}
    Recalling \eqref{evbcb2l} and \eqref{csirtc}, since $R>1$ is arbitrary and the integrand is nonnegative, this proves that
    \begin{align}\label{uinb2p2}
    & \int_{\mathbb{R}^N}|x|^{\alpha}\left[|\nabla U|^{p-2}|\nabla \widehat{v}|^2+(p-2)|\nabla U|^{p-2}\left(\frac{\nabla U\cdot\nabla \widehat{v}}{|\nabla U|}\right)^2\right]dx \nonumber\\
    \leq  &  \left[(p^*_{\alpha,\beta}-1)+\tau\right]
    \int_{\mathbb{R}^N}|x|^{\beta}U^{p^*_{\alpha,\beta}-2}|\widehat{v}|^2dx,
    \end{align}
    The orthogonality of $v_i$ (and also of $\widehat{v}_i$) implies that $\widehat{v}$ also is orthogonal to $T_{U} \mathcal{M}$. Since $\widehat{v}\in L^2_{\beta,*}(\mathbb{R}^N)$, \eqref{csirtl1} and \eqref{uinb2p2} contradict Proposition \ref{propevl}, completing the proof.

    $\bullet$ {\em The case $\frac{2(N+\alpha)}{N+2+\beta}< p<2$} which implies $p^*_{\alpha,\beta}> 2$. If the statement fails, there exists a sequence $0\not\equiv v_i\to 0$ in $\mathcal{D}^{1,p}_{\alpha,r}(\mathbb{R}^N)$, with $v_i$ orthogonal to $T_{U} \mathcal{M}$, such that
    \begin{small}
    \begin{align}\label{evbc2g}
    & \int_{\mathbb{R}^N}|x|^{\alpha}\bigg[
    |\nabla U|^{p-2}|\nabla v_i|^2
    +(p-2)|\omega_i|^{p-2}(|\nabla (U+v_i)|-|\nabla U|)^2
    \nonumber\\ &\quad\quad +\gamma_0 \min\{|\nabla v_i|^p,|\nabla U|^{p-2}|\nabla v_i|^2\}
    \bigg]dx
    \nonumber\\  < &  \left[(p^*_{\alpha,\beta}-1)+\tau\right]
    \int_{\mathbb{R}^N}|x|^{\beta}U^{p^*_{\alpha,\beta}-2}|v_i|^2dx,
    \end{align}
    \end{small}
    where $\omega_i$ corresponds to $v_i$ as in the statement. As in the case $1<p<\frac{2(N+\alpha)}{N+2+\beta}$, we define
    \[
    \varepsilon_i:=\left(\int_{\mathbb{R}^N}|x|^\alpha(|\nabla U|+|\nabla v_i|)^{p-2}|\nabla v_i|^2 dx\right)^{\frac{1}{2}},\quad \widehat{v}_i=\frac{v_i}{\varepsilon_i},
    \]
    and we also have $\varepsilon_i\to 0$ as $i\to \infty$.

    Then, we also split $B(0,R)\backslash B(0,1/R)=\mathcal{R}_{i,R} \cup \mathcal{S}_{i,R}$, \eqref{evbcb2l} and \eqref{evbcb2li} hold also in this case, with the only difference that the last term in both equations now becomes $\left[(p^*_{\alpha,\beta}-1)S_r+ \tau\right]
    \|U\|^{p-p^*_{\alpha,\beta}}_{*}
    \int_{\mathbb{R}^N}|x|^{\beta}U^{p^*_{\alpha,\beta}-2}|v_i|^2dx$.

    We now observe that, by using H\"{o}lder inequality, we have
    \begin{align*}
    \int_{\mathbb{R}^N}|x|^\alpha |\nabla \widehat{v}_i|^p dx
    \leq & \left(\int_{\mathbb{R}^N}|x|^\alpha (|\nabla U|+|\nabla v_i|)^{p-2}|\nabla \widehat{v}_i|^2 dx\right)^{\frac{p}{2}}
    \left(\int_{\mathbb{R}^N}|x|^\alpha (|\nabla U|+|\nabla v_i|)^{p}dx\right)^{1-\frac{p}{2}} \\
    = & \left(\int_{\mathbb{R}^N}|x|^\alpha (|\nabla U|+|\nabla v_i|)^{p}dx\right)^{1-\frac{p}{2}} \\
    \leq & C(p)\left[\left(\int_{\mathbb{R}^N}|x|^\alpha  |\nabla U|^{p}dx\right)^{1-\frac{p}{2}}
    +\varepsilon_i^{\frac{p(2-p)}{2}}\left(\int_{\mathbb{R}^N}|x|^\alpha  |\nabla \widehat{v}_i|^{p}dx\right)^{1-\frac{p}{2}}\right]
    \end{align*}
    from which it follows that
    \begin{equation}\label{vihl}
    \begin{split}
    \int_{\mathbb{R}^N}|x|^\alpha |\nabla \widehat{v}_i|^p dx
    \leq C(N,p,\alpha,\beta).
    \end{split}
    \end{equation}
    Thus, up to a subsequence, $\widehat{v}_i\rightharpoonup \widehat{v}$ in $\mathcal{D}^{1,p}_{\alpha,r}(\mathbb{R}^N)$ and $\widehat{v}_i \to \widehat{v}$ locally in $L^2(\mathbb{R}^N)$. In addition, Then by H\"{o}lder inequality and Sobolev inequality, together with \eqref{vihl}, yield for any $\rho>0$, it holds that
    \begin{align*}
    \int_{\mathbb{R}^N\backslash B(0,\rho)}|x|^\beta U^{p^*_{\alpha,\beta}-2}|\widehat{v}_i|^2 dx
    \leq & \left(\int_{\mathbb{R}^N\backslash B(0,\rho)}|x|^\beta U^{p^*_{\alpha,\beta}}dx\right)^{1-\frac{2}{p^*_{\alpha,\beta}}}
    \left(\int_{\mathbb{R}^N\backslash B(0,\rho)}|x|^\beta |\widehat{v}_i|^{p^*_{\alpha,\beta}}dx\right)^{\frac{2}{p^*_{\alpha,\beta}}} \\
    \leq & \frac{1}{S_r}\left(\int_{\mathbb{R}^N}|x|^\beta U^{p^*_{\alpha,\beta}}dx\right)^{1-\frac{2}{p^*_{\alpha,\beta}}}
    \left(\int_{\mathbb{R}^N}|x|^\alpha |\nabla \widehat{v}_i|^{p}dx\right)^{\frac{2}{p}}.
    \end{align*}
    Combining \eqref{vihl} and the strong convergence $\widehat{v}_i \to \widehat{v}$ locally in $L^2(\mathbb{R}^N)$, we conclude that $\widehat{v}_i\to \widehat{v}$ in $L^2_{\beta,*}(\mathbb{R}^N)$.

    In particular, letting $i\to \infty$ in the analogue of \eqref{evbcb2li} we obtain
    \begin{equation}\label{csirt0l}
    0<c(N,p,\alpha,\beta,C_1,\gamma_0)
    \leq \|\widehat{v}\|_{L^2_{\beta,*}(\mathbb{R}^N)}
    \leq C(N,p,\alpha,\beta,C_1).
    \end{equation}
    Similarly, the analogue of \eqref{evbcb2l} implies that
    \begin{equation}\label{csirt0g}
    |\mathcal{S}_{i,R}|\to 0\quad \mbox{and}\quad \int_{\mathbb{R}^N}|x|^\alpha |\nabla U|^{p-2}|\nabla \widehat{v}_i|^2 dx\leq C(N,p,\alpha,\beta),\quad \forall R>1.
    \end{equation}
    So, it follows from the weak convergence $\widehat{v}_i\rightharpoonup \widehat{v}$ in $\mathcal{D}^{1,p}_{\alpha}(\mathbb{R}^N)$ that, up to a subsequence,
    \[
    \widehat{v}_i \chi_{\mathcal{R}_{i,R}}\rightharpoonup \widehat{v}\chi_{B(0,R)\backslash B(0,1/R)}
    \quad \mbox{locally in} \quad \mathcal{D}^{1,2}_{\alpha,*}(\mathbb{R}^N),\quad \forall R>1.
    \]
    Thanks to this bound, we can split
    \[
    \widehat{v}_i=\widehat{v}+\varphi_i,\quad\mbox{with}\quad \varphi_i:= \widehat{v}_i-\widehat{v},
    \]
    and very same argument as in the case $1<p<\frac{2(N+\alpha)}{2+\beta-\alpha}$ allows us to deduce that
    \begin{align*}%\label{evbcbbcg}
    & \liminf_{i\to \infty}\int_{\mathcal{R}_{i,R}}|x|^{\alpha}\left[|\nabla U|^{p-2}|\nabla \widehat{v}_i|^2+(p-2)|\omega_i|^{p-2}\left(\frac{|\nabla (U+v_i)|-|\nabla U|}{\varepsilon_i}\right)^2\right]dx \\
     \geq & \int_{B(0,R)\backslash B(0,1/R)}|x|^{\alpha}\left[|\nabla U|^{p-2}|\nabla \widehat{v} |^2+(p-2)|\nabla U|^{p-2}\left(\frac{\nabla U\cdot\nabla \widehat{v} }{| \nabla U|}\right)^2\right]dx.
    \end{align*}
    Recalling \eqref{evbc2g}, since $R>1$ is arbitrary and the integrands above are nonnegative, this proves that \eqref{uinb2p2} holds, a contradiction to Proposition \ref{propevl} since $\widehat{v}$ is orthogonal to $T_{U} \mathcal{M}$ (being the strong $L^2_{\beta,*}(\mathbb{R}^N)$ limit of $\widehat{v}_i$).
    \end{proof}

    The main ingredient in the proof of Theorem \ref{thmprtp2} is contained in the lemma below, where the behavior near $\mathcal{M}$ is studied.

    \begin{lemma}\label{lemma:rtnm2b2}
    Suppose $1<p<2\leq N$.
    There exists a small constant $\rho>0$ such that for any sequence $\{u_n\}\subset \mathcal{D}^{1,p}_{\alpha,r}(\mathbb{R}^N)\backslash \mathcal{M}$ satisfying $\inf_n\|u_n\|>0$ and ${\rm dist}(u_n,\mathcal{M})\to 0$, it holds that
    \begin{equation}\label{rtnmb2}
    \liminf_{n\to\infty}
    \frac{\|u_n\|^p- S_r\|u_n\|_*^p}
    {{\rm dist}(u_n,\mathcal{M})^2}
    \geq \rho.
    \end{equation}
    \end{lemma}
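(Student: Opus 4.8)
The plan is to run the same scheme as in the proof of Lemma~\ref{lemma:rtnm2b}, replacing the $p\ge2$ inputs by their $1<p<2$ analogues: the algebraic inequality~\eqref{uinb1p} of Lemma~\ref{lemui1p}, the upper bounds~\eqref{uinx2pl}--\eqref{uinx2pb} of Lemma~\ref{lemui1p*l}, and the spectral gap estimate of Lemma~\ref{lemsgap2} in place of Lemma~\ref{lemsgap}. The genuinely new feature — and the reason the exponent $2$ appears here instead of $p$ — is that the lower-order term produced by~\eqref{uinb1p} involves the degenerate quantity $\min\{|\nabla v|^p,|\nabla U|^{p-2}|\nabla v|^2\}$, from which one must still extract a coercive term of quadratic order in $\mathrm{dist}(u_n,\mathcal{M})$.

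First I would show, exactly as in the proof of Lemma~\ref{lemma:rtnm2b}, that for $n$ large $d_n:=\mathrm{dist}(u_n,\mathcal{M})$ is attained: the triangle inequality $\|u_n-cU_\lambda\|\ge|c|\,\|U\|-\|u_n\|$ gives boundedness of $c$ along a minimizing sequence, the convexity bound $\|u_n-cU_\lambda\|^p\ge\|u_n\|^p-pc\int_{\mathbb R^N}|x|^\alpha|\nabla u_n|^{p-2}\nabla u_n\cdot\nabla U_\lambda\,dx$ together with the same localization estimates as in that proof (showing the cross term tends to $0$ as $\lambda\to0$ and as $\lambda\to\infty$) rules out $\lambda\to 0,\infty$, and $d_n<\|u_n\|$ for $n$ large then gives compactness of the parameters. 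By dilation invariance of the quotient I may take $\lambda_n=1$ and write $u_n=c_n(U+v_n)$ with $v_n:=u_n/c_n-U$. Then $d_n=|c_n|\,\|v_n\|$, so $\inf_n\|u_n\|>0$ and $d_n\to0$ force $|c_n|$ to be bounded above and bounded away from $0$, and $\|v_n\|=d_n/|c_n|\to0$; moreover $v_n$ is radial, orthogonal to $T_{U}\mathcal{M}$ in $L^2_{\beta,*}(\mathbb R^N)$, and testing the equation for $U$ against $v_n$ gives $\int_{\mathbb R^N}|x|^\alpha|\nabla U|^{p-2}\nabla U\cdot\nabla v_n\,dx=0$.

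Next I would apply~\eqref{uinb1p} with $x=\nabla U$, $y=\nabla v_n$ and integrate against $|x|^\alpha$; the linear term vanishes by the orthogonality, and setting $M_n:=\int_{\mathbb R^N}|x|^\alpha\min\{|\nabla v_n|^p,|\nabla U|^{p-2}|\nabla v_n|^2\}\,dx$ and $\gamma_0:=\mathcal C_1/((1-\kappa)p)$, the surviving terms add up to $\tfrac{(1-\kappa)p}{2}$ times the left-hand side of Lemma~\ref{lemsgap2} at $v=v_n$, plus a retained piece $\tfrac12\mathcal C_1 M_n$. For $n$ large $\|v_n\|\le\overline\delta$, so Lemma~\ref{lemsgap2} yields $\|U+v_n\|^p\ge\|U\|^p+\tfrac{(1-\kappa)p}{2}\big[(p^*_{\alpha,\beta}-1)+\tau\big]Q_n+\tfrac12\mathcal C_1 M_n$, where $Q_n=\int_{\mathbb R^N}|x|^\beta U^{p^*_{\alpha,\beta}-2}|v_n|^2dx$ when $\tfrac{2(N+\alpha)}{N+2+\beta}<p<2$ and $Q_n$ is the Orlicz-type integral of~\eqref{uinx2pl} when $1<p\le\tfrac{2(N+\alpha)}{N+2+\beta}$ (here Lemma~\ref{lemsgap2} is applied with its constant $C_1$ equal to the constant $\mathcal C_2$ of Lemma~\ref{lemui1p*l}). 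On the other side I would use~\eqref{uinx2pl}--\eqref{uinx2pb} with $a=c_nU$, $b=d_nw_n$, the vanishing $\int_{\mathbb R^N}|x|^\beta U^{p^*_{\alpha,\beta}-1}w_n\,dx=0$, and the concavity of $t\mapsto t^{p/p^*_{\alpha,\beta}}$ (recall $p<p^*_{\alpha,\beta}$); the extra term $\mathcal C_2 d_n^{p^*_{\alpha,\beta}}\int_{\mathbb R^N}|x|^\beta|w_n|^{p^*_{\alpha,\beta}}dx$, present only when $p^*_{\alpha,\beta}>2$, is $o(d_n^2)$ by~\eqref{Ppbcm} since $\|w_n\|=1$ and $p^*_{\alpha,\beta}>2$. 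Using $\|U\|^p=S_r\|U\|_*^p$, $\|U\|_*^{p^*_{\alpha,\beta}}=\|U\|^p$ and $v_n=(d_n/c_n)w_n$, this gives $S_r\|u_n\|_*^p\le|c_n|^p\|U\|^p+\tfrac{p}{p^*_{\alpha,\beta}}\big(\tfrac{p^*_{\alpha,\beta}(p^*_{\alpha,\beta}-1)}{2}+\kappa\big)|c_n|^pQ_n+o(d_n^2)$. Subtracting, the $|c_n|^p\|U\|^p$ terms cancel and the coefficient of $|c_n|^pQ_n$ equals $\tfrac{(1-\kappa)p}{2}\big[(p^*_{\alpha,\beta}-1)+\tau\big]-\tfrac{p}{p^*_{\alpha,\beta}}\big(\tfrac{p^*_{\alpha,\beta}(p^*_{\alpha,\beta}-1)}{2}+\kappa\big)\to\tfrac{p\tau}{2}>0$ as $\kappa\to0$; fixing $\kappa$ small (depending only on $N,p,\alpha,\beta$, which in turn fixes $\mathcal C_1$, hence $\gamma_0$, hence $\overline\delta$) makes this coefficient nonnegative, so, since $Q_n\ge0$, one is left with $\|u_n\|^p-S_r\|u_n\|_*^p\ge\tfrac12\mathcal C_1|c_n|^pM_n-o(d_n^2)$.

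The last step, which I expect to be the main obstacle, is the quadratic lower bound $M_n\ge c\,\|v_n\|^2$ with $c=c(N,p,\alpha,\beta)>0$. For this I would split $\mathbb R^N=\{|\nabla v_n|\le|\nabla U|\}\cup\{|\nabla v_n|>|\nabla U|\}$, so that $M_n=\int_{\{|\nabla v_n|\le|\nabla U|\}}|x|^\alpha|\nabla U|^{p-2}|\nabla v_n|^2dx+\int_{\{|\nabla v_n|>|\nabla U|\}}|x|^\alpha|\nabla v_n|^pdx$. Over the second set $|\nabla v_n|^p$ is itself a summand of $M_n$; over the first, Hölder's inequality with exponents $2/p$ and $2/(2-p)$ gives $\int_{\{|\nabla v_n|\le|\nabla U|\}}|x|^\alpha|\nabla v_n|^pdx\le\|U\|^{p(2-p)/2}M_n^{p/2}$. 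Hence $\|v_n\|^p=\int_{\mathbb R^N}|x|^\alpha|\nabla v_n|^pdx\le M_n+\|U\|^{p(2-p)/2}M_n^{p/2}$, and since $M_n\le\|v_n\|^p\to0$ the term $M_n^{p/2}$ eventually dominates, giving $M_n\ge c\,\|v_n\|^2=c\,(d_n/|c_n|)^2$. Plugging this into the estimate above and using that $|c_n|$ is bounded above and below yields $\|u_n\|^p-S_r\|u_n\|_*^p\ge c'\,d_n^2-o(d_n^2)$, whence~\eqref{rtnmb2} follows with any $\rho<c'$. The delicate aspect is precisely this: turning the degenerate $\min\{|\nabla v|^p,|\nabla U|^{p-2}|\nabla v|^2\}$ into a genuinely quadratic coercive remainder, while keeping the choice of $\kappa$, $\gamma_0$ and $\overline\delta$ mutually compatible with the spectral gap of Lemma~\ref{lemsgap2}.
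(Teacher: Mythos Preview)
Your proposal is correct and follows essentially the same approach as the paper: expand $\|u_n\|^p$ via \eqref{uinb1p}, expand $\|u_n\|_*^{p^*_{\alpha,\beta}}$ via \eqref{uinx2pl}--\eqref{uinx2pb}, feed the quadratic pieces into Lemma~\ref{lemsgap2} to kill the $Q_n$ term, and finally extract a quadratic lower bound from $M_n$ by the same H\"older trick (splitting according to $\{|\nabla v_n|\lessgtr|\nabla U|\}$ and using exponents $2/p$, $2/(2-p)$). The only cosmetic differences are that you normalize $\lambda_n=1$ by dilation invariance and split $\mathcal{C}_1 M_n$ in half up front (setting $\gamma_0=\mathcal{C}_1/((1-\kappa)p)$), whereas the paper keeps $\lambda_n$ and instead chooses $\gamma_0$ small at the end; both bookkeepings lead to the same conclusion.
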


    \begin{proof}
    Let $d_n:={\rm dist}(u_n,\mathcal{M})=\inf_{c\in\mathbb{R}, \lambda>0}\|u_n-cU_\lambda\|_{\mathcal{D}^{1,p}_{\alpha}(\mathbb{R}^N)}\to 0$ as $n\to \infty$. We know that for each $u_n\in \mathcal{D}^{1,p}_{\alpha,r}(\mathbb{R}^N)$, there exist $c_n\in\mathbb{R}$ and $\lambda_n>0$ such that $d_n=\|u_n-c_nU_{\lambda_n}\|_{\mathcal{D}^{1,p}_{\alpha}(\mathbb{R}^N)}$. %, see\cite[Lemma 4.1]{FZ22}.
    In fact, since $1< p<2$, for each fixed $n$, from Lemma \ref{lemui1p}, we obtain that for any $0<\kappa<1$, there exists a constant $\mathcal{C}_1=\mathcal{C}_1(p,\kappa)>0$ such that
    \begin{small}
    \begin{align}\label{ikeda2}
    \|u_n-cU_\lambda\|^p
    = & \int_{\mathbb{R}^N}|x|^{\alpha}|\nabla u_n-c\nabla U_\lambda|^p dx\nonumber\\
    \geq & \int_{\mathbb{R}^N}|x|^{\alpha}|\nabla u_n|^p dx
    -pc\int_{\mathbb{R}^N}|x|^{\alpha}|\nabla u_n|^{p-2}  \nabla u_n\cdot \nabla U_\lambda dx  \nonumber\\ &
    +\mathcal{C}_1|c|^{2}\int_{\mathbb{R}^N}|x|^{\alpha}\min\{|c|^{p-2}|\nabla U_\lambda|^p,|\nabla u_n|^{p-2}|\nabla U_\lambda|\} dx
    \nonumber\\ &
    +\frac{(1-\kappa)p}{2}c^2 \int_{\mathbb{R}^N}|x|^{\alpha} |\nabla u_n|^{p-2}  |\nabla U_\lambda|^2dx \nonumber\\
    & +\frac{(1-\kappa)p(p-2)}{2}\int_{\mathbb{R}^N}|x|^{\alpha}|\omega(\nabla u_n, \nabla u_n-c \nabla U_{\lambda})|^{p-2}(|c \nabla U_{\lambda}|-|\nabla u_n|)^2  dx \nonumber\\
    \geq & \int_{\mathbb{R}^N}|x|^{\alpha}|\nabla u_n|^p dx
    -pc\int_{\mathbb{R}^N}|x|^{\alpha}|\nabla u_n|^{p-2}  \nabla u_n\cdot \nabla U_\lambda dx  \nonumber\\ &
    +\mathcal{C}_1|c|^{2}\int_{\mathbb{R}^N}|x|^{\alpha}\min\{|c|^{p-2}|\nabla U_\lambda|^p,|\nabla u_n|^{p-2}|\nabla U_\lambda|\} dx
    \nonumber\\
    \geq & \|u_n\|^p-p|c|\|U\|\|u_n\|^{p-1}\nonumber\\ &
    +\mathcal{C}_1|c|^{2}\int_{\mathbb{R}^N}|x|^{\alpha}\min\{|c|^{p-2}|\nabla U_\lambda|^p,|\nabla u_n|^{p-2}|\nabla U_\lambda|\} dx,
    \end{align}
    \end{small}
    where $\omega:\mathbb{R}^{2N}\to \mathbb{R}^N$ corresponds to $\nabla u_n$ and $\nabla u_n-c \nabla U_{\lambda}$ as in Lemma \ref{lemui1p} for the case $1<p<2$.
    Thus the minimizing sequence of $d_n$, say $\{c_{n,m},\lambda_{n,m}\}$, must satisfying $1/C\leq |c_{n,m}|\leq C$ for some $C>0$,  which means $\{c_{n,m}\}$ is bounded.
    On the other hand, taking the same steps as those in Lemma \ref{lemma:rtnm2b}, we deduce that
    \[\left|\int_{\mathbb{R}^N}|x|^{\alpha}|\nabla u_n|^{p-2}  \nabla u_n\cdot \nabla U_\lambda dx\right| \to 0\quad \mbox{as}\quad \lambda\to 0,\]
    and
    \[\left|\int_{\mathbb{R}^N}|x|^{\alpha}|\nabla u_n|^{p-2}  \nabla u_n\cdot \nabla U_\lambda dx\right| \to 0\quad \mbox{as}\quad \lambda\to +\infty.\]
    It follows from (\ref{ikeda}) and $d_n\to 0$, $\inf_n\|u_n\|>0$ that the minimizing sequence $\{c_{n,m},\lambda_{n,m}\}$ must satisfying $1/C\leq |\lambda_{n,m}|\leq C$ for some $C>1$,  which means $\{\lambda_{n,m}\}$ is bounded. Thus for each $u_n\in  \mathcal{D}^{1,p}_{\alpha,r}(\mathbb{R}^N)\backslash \mathcal{M}$, $d_n$ can also be attained by some $c_n\in\mathbb{R}$ and $\lambda_n>0$.

    Since $\mathcal{M}$ is two-dimensional manifold embedded in $\mathcal{D}^{1,p}_{\alpha,r}(\mathbb{R}^N)$, that is
    \[
    (c,\lambda)\in\mathbb{R}\times\mathbb{R}_+\to cU_\lambda\in \mathcal{D}^{1,p}_{\alpha,r}(\mathbb{R}^N),
    \]
    then from Proposition \ref{propev}, the tangential space at $(c_n,\lambda_n)$ is given by
    \[
    T_{c_n U_{\lambda_n}}\mathcal{M}={\rm Span}\left\{U_{\lambda_n}, \quad \lambda_n^{\frac{N-p+\alpha}{p}}W_0(\lambda_n x),\quad \lambda_n^{\frac{N-p+\alpha}{p}}W_{k,i}(\lambda_n x), i=1,\ldots, M_k\right\},
    \]
    if (\ref{npkev}) holds, otherwise
    \[
    T_{c_n U_{\lambda_n}}\mathcal{M}={\rm Span}\left\{U_{\lambda_n}, \quad \lambda_n^{\frac{N-p+\alpha}{p}}W_0(\lambda_n x)\right\}.
    \]
    Anyway we must have that $(u_n-c_n U_{\lambda_n})$ is perpendicular to $T_{c_n U_{\lambda_n}}\mathcal{M}$, particularly
    \begin{equation*}
    \int_{\mathbb{R}^N}|x|^\alpha |\nabla U_{\lambda_n}|^{p-2}\nabla U_{\lambda_n}\cdot \nabla (u_n-c_n U_{\lambda_n}) dx=0.
    \end{equation*}
    Let
    \begin{equation}\label{defunwn2}
    u_n=c_n U_{\lambda_n}+d_n w_n,
    \end{equation}
     then $w_n$ is perpendicular to $T_{c_n U_{\lambda_n}}\mathcal{M}$, we have
    \begin{equation*}
    \|w_n\|=1\quad \mbox{and}\quad \int_{\mathbb{R}^N}|x|^\alpha |\nabla U_{\lambda_n}|^{p-2}\nabla U_{\lambda_n}\cdot \nabla w_n dx=0.
    \end{equation*}
    Since $1<p<2$, from Lemma \ref{lemui1p} we obtain that for any $\kappa>0$, there exists a constant $\mathcal{C}_2=\mathcal{C}_2(p,\kappa)>0$ such that
    \begin{align}\label{epknug2}
    \int_{\mathbb{R}^N}|x|^{\alpha}|\nabla u_n|^p dx
    = & \int_{\mathbb{R}^N}|x|^{\alpha}|c_n \nabla U_{\lambda_n}+d_n \nabla w_n|^p dx \nonumber\\
    \geq & |c_n|^{p}\int_{\mathbb{R}^N}|x|^{\alpha}|\nabla U_{\lambda_n}|^p dx
    +p|c_n|^{p-2}c_nd_n \int_{\mathbb{R}^N}|x|^{\alpha}|\nabla U_{\lambda_n}|^{p-2}  \nabla U_{\lambda_n}\cdot \nabla w_n dx  \nonumber\\
    & +\frac{(1-\kappa)p}{2} |c_n|^{p-2}d_n^2\int_{\mathbb{R}^N}|x|^{\alpha} |\nabla U_{\lambda_n}|^{p-2}  |\nabla w_n|^2dx \nonumber\\
    & +\frac{(1-\kappa)p(p-2)}{2}\int_{\mathbb{R}^N}|x|^{\alpha}|\omega(c_n \nabla U_{\lambda_n},\nabla u_n)|^{p-2}(|c_n \nabla U_{\lambda_n}|-|\nabla u_n|)^2  dx \nonumber\\
    & +\mathcal{C}_2d_n^{2} \int_{\mathbb{R}^N}|x|^{\alpha}\min\{d_n^{p-2}|\nabla w_n|^{p}, |c_n \nabla U_{\lambda_n}|^{p-2}|\nabla w_n|^{2}\} dx  \nonumber\\
    = & |c_n|^{p}\|U\|^p
    +\frac{(1-\kappa)p}{2} |c_n|^{p-2}d_n^2\int_{\mathbb{R}^N}|x|^{\alpha} |\nabla U_{\lambda_n}|^{p-2}  |\nabla w_n|^2dx \nonumber\\
    & +\frac{(1-\kappa)p(p-2)}{2}\int_{\mathbb{R}^N}|x|^{\alpha}|\omega(c_n \nabla U_{\lambda_n},\nabla u_n)|^{p-2}(|c_n \nabla U_{\lambda_n}|-|\nabla u_n|)^2  dx\nonumber\\
    & +\mathcal{C}_2d_n^{2} \int_{\mathbb{R}^N}|x|^{\alpha}\min\{d_n^{p-2}|\nabla w_n|^{p}, |c_n \nabla U_{\lambda_n}|^{p-2}|\nabla w_n|^{2}\} dx,
    \end{align}
    where $\omega:\mathbb{R}^{2N}\to \mathbb{R}^N$ corresponds to $c_n \nabla U_{\lambda_n}$ and $u_n$ as in Lemma \ref{lemui1p}. Then we consider the following two cases:

    $\bullet$ {\em The case $1<p\leq\frac{2(N+\alpha)}{N+2+\beta}$} which implies $p^*_{\alpha,\beta}\leq2$.

    From Lemma \ref{lemui1p*l}, for any $\kappa>0$ and $C_1>0$, there exists a constant $\mathcal{C}_1=\mathcal{C}_1(p^*_{\alpha,\beta},\kappa,C_1)>0$ such that
    \begin{align*}%\label{epkeyiybb}
    & \int_{\mathbb{R}^N}|x|^{\beta}|u_n|^{p^*_{\alpha,\beta}} dx \\
    = & \int_{\mathbb{R}^N}|x|^{\beta}|c_n U_{\lambda_n}+d_nw_n|^{p^*_{\alpha,\beta}}  dx\\
    \leq & |c_n|^{p^*_{\alpha,\beta}}\int_{\mathbb{R}^N}|x|^{\beta}U_{\lambda_n}^{p^*_{\alpha,\beta}} dx
    +p^*_{\alpha} |c_n|^{p^*_{\alpha,\beta}-2}c_n d_n \int_{\mathbb{R}^N}|x|^{\beta}U_{\lambda_n}^{p^*_{\alpha,\beta}-1}w_n dx  \\
    & +\left(\frac{p^*_{\alpha,\beta}(p^*_{\alpha,\beta}-1)}{2}+\kappa\right)d_n^2
    \int_{\mathbb{R}^N}|x|^{\beta}\frac{(|c_n U_{\lambda_n}|+\mathcal{C}_1|d_nw_n|)^{p^*_{\alpha,\beta}}}{|c_n U_{\lambda_n}|^2+|d_nw_n|^2}w_n^2 dx \\
    = & |c_n|^{p^*_{\alpha,\beta}}\|U\|^p
    +\left(\frac{p^*_{\alpha,\beta}(p^*_{\alpha,\beta}-1)}{2}+\kappa\right)d_n^2
    \int_{\mathbb{R}^N}|x|^{\beta}\frac{(|c_n U_{\lambda_n}|+\mathcal{C}_1|d_nw_n|)^{p^*_{\alpha,\beta}}}{|c_n U_{\lambda_n}|^2+|d_nw_n|^2}w_n^2 dx,
    \end{align*}
    since
    \begin{equation*}
    \int_{\mathbb{R}^N}|x|^{\beta}U_{\lambda_n}^{p^*_{\alpha,\beta}-1}w_n dx=\int_{\mathbb{R}^N}|x|^\alpha |\nabla U_{\lambda_n}|^{p-2}\nabla U_{\lambda_n}\cdot \nabla w_n dx=0,
    \end{equation*}
    and
    \begin{equation*}
    \int_{\mathbb{R}^N}|x|^{\beta}U_{\lambda_n}^{p^*_{\alpha,\beta}} dx=\int_{\mathbb{R}^N}|x|^\alpha |\nabla U_{\lambda_n}|^{p}dx=\|U\|^p.
    \end{equation*}
    Thus, by the concavity of $t\mapsto t^{\frac{p}{p^*_{\alpha,\beta}}}$, we have
    \begin{align}\label{epkeyiyxbb2}
    \left(\int_{\mathbb{R}^N}|x|^{\beta}|u_n|^{p^*_{\alpha,\beta}} dx\right)^{\frac{p}{p^*_{\alpha,\beta}}}
    \leq  &  |c_n|^p\|U\|^{\frac{p^2}{p^*_{\alpha,\beta}}}
    +\frac{p}{p^*_{\alpha,\beta}}\left(\frac{p^*_{\alpha,\beta}(p^*_{\alpha,\beta}-1)}{2}+\kappa\right)d_n^2
    \|U\|^{\frac{p^2}{p^*_{\alpha,\beta}}-p}
    \nonumber\\ & \quad\times\int_{\mathbb{R}^N}|x|^{\beta}\frac{(|c_n U_{\lambda_n}|+\mathcal{C}_1|d_nw_n|)^{p^*_{\alpha,\beta}}}{|c_n U_{\lambda_n}|^2+|d_nw_n|^2}w_n^2 dx.
    \end{align}
    Therefore, as $d_n\to 0$, combining \eqref{epkeyiyxbb2} with \eqref{epknug2}, it follows from Lemma \ref{lemsgap} that, by choosing $\kappa>0$ small enough,
    \begin{small}
    \begin{align*}
    & \int_{\mathbb{R}^N}|x|^{\alpha}|\nabla u_n|^p dx- S_r\left(\int_{\mathbb{R}^N}|x|^{\beta}|u_n|^{p^*_{\alpha,\beta}} dx\right)^{\frac{p}{p^*_{\alpha,\beta}}} \\
    \geq & |c_n|^{p}\|U\|^p
    +\frac{(1-\kappa)p}{2}d_n^2\int_{\mathbb{R}^N}|x|^{\alpha} |\nabla c_n U_{\lambda_n}|^{p-2}  |\nabla w_n|^2dx \\
    & +\frac{(1-\kappa)p(p-2)}{2}\int_{\mathbb{R}^N}|x|^{\alpha}|\omega(c_n \nabla U_{\lambda_n},\nabla u_n)|^{p-2}(|c_n \nabla U_{\lambda_n}|-|\nabla u_n|)^2  dx\\
    & +\mathcal{C}_2d_n^{2} \int_{\mathbb{R}^N}|x|^{\alpha}\min\{d_n^{p-2}|\nabla w_n|^{p}, |c_n \nabla U_{\lambda_n}|^{p-2}|\nabla w_n|^{2}\} dx \\
    & -S_r\Bigg\{|c_n|^p\|U\|^{\frac{p^2}{p^*_{\alpha,\beta}}}  \\ & \quad \quad +\left(\frac{p(p^*_{\alpha,\beta}-1)}{2}+\frac{p\kappa}{p^*_{\alpha,\beta}}\right)d_n^2
    \|U\|^{\frac{p^2}{p^*_{\alpha,\beta}}-p}\int_{\mathbb{R}^N}|x|^{\beta}\frac{(|c_n U_{\lambda_n}|+\mathcal{C}_1|d_nw_n|)^{p^*_{\alpha,\beta}}}{|c_n U_{\lambda_n}|^2+|d_nw_n|^2}w_n^2 dx\Bigg\} \\
    \geq  & \frac{(1-\kappa)p}{2} d_n^2\int_{\mathbb{R}^N}|x|^{\alpha} |\nabla c_nU_{\lambda_n}|^{p-2}  |\nabla w_n|^2dx \\
    & +\frac{(1-\kappa)p(p-2)}{2}\int_{\mathbb{R}^N}|x|^{\alpha}|\omega(c_n \nabla U_{\lambda_n},\nabla u_n)|^{p-2}(|c_n \nabla U_{\lambda_n}|-|\nabla u_n|)^2  dx\\
    & +\mathcal{C}_2d_n^{2} \int_{\mathbb{R}^N}|x|^{\alpha}\min\{d_n^{p-2}|\nabla w_n|^{p}, |c_n \nabla U_{\lambda_n}|^{p-2}|\nabla w_n|^{2}\} dx \\
    & -\left(\frac{p(p^*_{\alpha,\beta}-1)}{2}+\frac{p\kappa}{p^*_{\alpha,\beta}}\right)d_n^2
    \int_{\mathbb{R}^N}|x|^{\beta}\frac{(|c_n U_{\lambda_n}|+\mathcal{C}_1|d_nw_n|)^{p^*_{\alpha,\beta}}}{|c_n U_{\lambda_n}|^2+|d_nw_n|^2}w_n^2 dx,
    \end{align*}
    \end{small}
    since $\|U\|^p=\|U\|_*^{p^*_{\alpha,\beta}}=S_r^{\frac{p^*_{\alpha,\beta}}{p^*_{\alpha,\beta}-p}}$ and $\|U\|^p=S_r\|U\|_*^p$. Lemma \ref{lemsgap2} allows us to reabsorb the last term above: more precisely, we have
    \begin{small}
    \begin{align*}
    & \int_{\mathbb{R}^N}|x|^{\alpha}|\nabla u_n|^p dx- S_r\left(\int_{\mathbb{R}^N}|x|^{\beta}|u_n|^{p^*_{\alpha,\beta}} dx\right)^{\frac{p}{p^*_{\alpha,\beta}}} \\
    \geq & pd_n^2\left(\frac{(1-\kappa)}{2} - \frac{(p^*_{\alpha,\beta}-1)+\frac{2}{p^*_{\alpha,\beta}}\kappa}{2(p^*_{\alpha,\beta}-1)
    +2\tau }\right) \\
    & \quad \times
    \int_{\mathbb{R}^N}|x|^{\alpha} \left[|\nabla c_nU_{\lambda_n}|^{p-2}  |\nabla w_n|^2
    +(p-2)|\omega(c_n \nabla U_{\lambda_n},\nabla u_n)|^{p-2}\left(\frac{|c_n \nabla U_{\lambda_n}|-|\nabla u_n|}{d_n}\right)^2\right]dx \\
    & + d_n^{2}\left(\mathcal{C}_2 - \gamma_0\frac{p\left[(p^*_{\alpha,\beta}-1)+\frac{2}{p^*_{\alpha,\beta}}\kappa\right]}
    {2(p^*_{\alpha,\beta}-1)+2\tau }\right)
    \int_{\mathbb{R}^N}|x|^{\alpha} \min\{d_n^{p-2}|\nabla w_n|^{p}, |c_n \nabla U_{\lambda_n}|^{p-2}|\nabla w_n|^{2}\} dx.
    \end{align*}
    \end{small}
    Now, let us recall the definition of $\omega$, as stated in Lemma \ref{lemui1p}, we have
    \[
    |\nabla c_nU_{\lambda_n}|^{p-2}  |\nabla w_n|^2
    +(p-2)|\omega(c_n \nabla U_{\lambda_n},\nabla u_n)|^{p-2}\left(\frac{|c_n \nabla U_{\lambda_n}|-|\nabla u_n|}{d_n}\right)^2\geq 0,
    \]
    then choosing $\kappa>0$ small enough such that
    \[
    \frac{(1-\kappa)}{2} - \frac{(p^*_{\alpha,\beta}-1)+\frac{2}{p^*_{\alpha,\beta}}\kappa}{2(p^*_{\alpha,\beta}-1)
    +2\tau }\geq 0,
    \]
    and then $\gamma_0>0$ small enough such that
    \[
    \frac{\mathcal{C}_2}{2}\geq \gamma_0\frac{p\left[(p^*_{\alpha,\beta}-1)
    +\frac{2}{p^*_{\alpha,\beta}}\kappa\right]}
    {2(p^*_{\alpha,\beta}-1)+2\tau },
    \]
    we eventually arrive at
    \begin{small}
    \begin{align}\label{emn21}
    & \int_{\mathbb{R}^N}|x|^{\alpha}|\nabla u_n|^p dx- S_r\left(\int_{\mathbb{R}^N}|x|^{\beta}|u_n|^{p^*_{\alpha,\beta}} dx\right)^{\frac{p}{p^*_{\alpha,\beta}}}
    \nonumber\\ \geq & \frac{\mathcal{C}_2}{2}d_n^{2}\int_{\mathbb{R}^N}|x|^{\alpha} \min\{d_n^{p-2}|\nabla w_n|^{p}, |c_n \nabla U_{\lambda_n}|^{p-2}|\nabla w_n|^{2}\} dx.
    \end{align}
    \end{small}
    Observe that, since $1<p<2$, it follows by H\"{o}lder inequality that
    \begin{align*}
    \left(\int_{\{d_n|\nabla w_n|\geq|c_n \nabla U_{\lambda_n}|\}}|x|^{\alpha}  |\nabla w_n|^{p} dx\right)^{\frac{2}{p}}
    \leq & \left(\int_{\{d_n|\nabla w_n|\geq|c_n \nabla U_{\lambda_n}|\}}|x|^{\alpha}  |\nabla U_{\lambda_n}|^{p} dx\right)^{\frac{2}{p}-1}  \\
    & \quad \times
    \int_{\{d_n|\nabla w_n|\geq|c_n \nabla U_{\lambda_n}|\}}|x|^{\alpha}  |\nabla U_{\lambda_n}|^{p-2}|\nabla w_n|^{2} dx  \\
    \leq & S_r^{\frac{p^*_{\alpha,\beta}(\frac{2}{p}-1)}{p^*_{\alpha,\beta}-p}} \int_{\{d_n|\nabla w_n|\geq|c_n \nabla U_{\lambda_n}|\}}|x|^{\alpha}  |\nabla U_{\lambda_n}|^{p-2}|\nabla w_n|^{2} dx,
    \end{align*}
    then we obtain
    \begin{align}\label{emn21b}
    & \int_{\mathbb{R}^N}|x|^{\alpha} \min\{d_n^{p-2}|\nabla w_n|^{p}, |c_n \nabla U_{\lambda_n}|^{p-2}|\nabla w_n|^{2}\} dx
    \nonumber\\
    = & d_n^{p-2}\int_{\{d_n|\nabla w_n|<|c_n \nabla U_{\lambda_n}|\}}|x|^{\alpha}  |\nabla w_n|^{p} dx
    +\int_{\{d_n|\nabla w_n|\geq |c_n \nabla U_{\lambda_n}|\}}|x|^{\alpha}   |c_n \nabla U_{\lambda_n}|^{p-2}|\nabla w_n|^{2}dx
    \nonumber\\
    \geq & d_n^{p-2}\int_{\{d_n|\nabla w_n|< |c_n \nabla U_{\lambda_n}|\}}|x|^{\alpha}  |\nabla w_n|^{p} dx
    + c\left(\int_{\{d_n|\nabla w_n|\geq|c_n \nabla U_{\lambda_n}|\}}|x|^{\alpha}  |\nabla w_n|^{p} dx\right)^{\frac{2}{p}}
    \nonumber\\
    \geq & c\left(\int_{\mathbb{R}^N}|x|^{\alpha}  |\nabla w_n|^{p} dx\right)^{\frac{2}{p}}=c,
    \end{align}
    for some constant $c>0$.
    The conclusion (\ref{rtnmb2}) follows immediately from \eqref{emn21} and \eqref{emn21b}.

    $\bullet$ {\em The case $\frac{2(N+\alpha)}{N+2+\beta}< p<2$} which implies $p^*_{\alpha,\beta}> 2$.

    The proof is very similar to the previous case, with very small changes. From Lemma \ref{lemui1p*l}, we have that for any $\kappa>0$, there exists a constant $\mathcal{C}_1=\mathcal{C}_1(p^*_{\alpha,\beta},\kappa)>0$ such that
    \begin{align*}%\label{epkeyiybb}
    & \int_{\mathbb{R}^N}|x|^{\beta}|u_n|^{p^*_{\alpha,\beta}} dx \\
    \leq & |c_n|^{p^*_{\alpha,\beta}}\int_{\mathbb{R}^N}|x|^{\beta}U_{\lambda_n}^{p^*_{\alpha,\beta}} dx
    +|c_n|^{p^*_{\alpha,\beta}-2}c_n p^*_{\alpha} d_n \int_{\mathbb{R}^N}|x|^{\beta}U_{\lambda_n}^{p^*_{\alpha,\beta}-1}w_n dx  \\
    & +\left(\frac{p^*_{\alpha,\beta}(p^*_{\alpha,\beta}-1)}{2}+\kappa\right)|c_n|^{p^*_{\alpha,\beta}-2} d_n^2
    \int_{\mathbb{R}^N}|x|^{\beta}U_{\lambda_n}^{p^*_{\alpha,\beta}-2}w_n^2 dx \\
    & +\mathcal{C}_2d_n^{p^*_{\alpha,\beta}}\int_{\mathbb{R}^N}|x|^{\beta}|w_n|^{p^*_{\alpha,\beta}} dx \\
    = & |c_n|^{p^*_{\alpha,\beta}}\|U\|^p
    +\left(\frac{p^*_{\alpha,\beta}(p^*_{\alpha,\beta}-1)}{2}+\kappa\right)|c_n|^{p^*_{\alpha,\beta}-2} d_n^2
    \int_{\mathbb{R}^N}|x|^{\beta}U_{\lambda_n}^{p^*_{\alpha,\beta}-2}w_n^2 dx
    + o(d_n^2).
    \end{align*}
    Then by the concavity of $t\mapsto t^{\frac{p}{p^*_{\alpha,\beta}}}$, we have
    \begin{align}\label{epkeyiyxbbg2}
    \left(\int_{\mathbb{R}^N}|x|^{\beta}|u_n|^{p^*_{\alpha,\beta}} dx\right)^{\frac{p}{p^*_{\alpha,\beta}}}
    \leq  &  |c_n|^p\|U\|^{\frac{p^2}{p^*_{\alpha,\beta}}}
    + o(d_n^2)
    + \frac{p|c_n|^{p^*_{\alpha,\beta}-2} d_n^2}{p^*_{\alpha,\beta}}\left(\frac{p^*_{\alpha,\beta}(p^*_{\alpha,\beta}-1)}{2}+\kappa\right)
    \nonumber\\ &\quad \times \|U\|^{\frac{p^2}{p^*_{\alpha,\beta}}-p}
    \int_{\mathbb{R}^N}|x|^{\beta}U_{\lambda_n}^{p^*_{\alpha,\beta}-2}w_n^2 dx.
    \end{align}
    Hence, arguing as in the case $1<p<\frac{2(N+\alpha)}{N+2+\beta}$,
    Therefore, as $d_n\to 0$, combining \eqref{epknug2} with \eqref{epkeyiyxbbg2}, it follows from Lemma \ref{lemsgap} that, by choosing $\kappa>0$ small enough,
    \begin{small}
    \begin{align*}
    & \int_{\mathbb{R}^N}|x|^{\alpha}|\nabla u_n|^p dx- S_r\left(\int_{\mathbb{R}^N}|x|^{\beta}|u_n|^{p^*_{\alpha,\beta}} dx\right)^{\frac{p}{p^*_{\alpha,\beta}}} \\
    \geq & |c_n|^{p}\|U\|^p
    +\frac{(1-\kappa)p}{2} |c_n|^{p-2}d_n^2\int_{\mathbb{R}^N}|x|^{\alpha} |\nabla U_{\lambda_n}|^{p-2}  |\nabla w_n|^2dx \\
    & +\frac{(1-\kappa)p(p-2)}{2}\int_{\mathbb{R}^N}|x|^{\alpha}|\omega(c_n \nabla U_{\lambda_n},\nabla u_n)|^{p-2}(|c_n \nabla U_{\lambda_n}|-|\nabla u_n|)^2  dx\\
    & +\mathcal{C}_2d_n^{2} \int_{\mathbb{R}^N}|x|^{\alpha}\min\{d_n^{p-2}|\nabla w_n|^{p}, |c_n \nabla U_{\lambda_n}|^{p-2}|\nabla w_n|^{2}\} dx \\
    & -S_r\Bigg\{|c_n|^p\|U\|^{\frac{p^2}{p^*_{\alpha,\beta}}}
    + o(d_n^2) \\
    & \quad \quad+ \frac{p|c_n|^{p^*_{\alpha,\beta}-2} d_n^2}{p^*_{\alpha,\beta}}\left(\frac{p^*_{\alpha,\beta}(p^*_{\alpha,\beta}-1)}{2}+\kappa\right)
    \|U\|^{\frac{p^2}{p^*_{\alpha,\beta}}-p}\int_{\mathbb{R}^N}|x|^{\beta}U_{\lambda_n}^{p^*_{\alpha,\beta}-2}w_n^2 dx
    \Bigg\} \\
    \geq  & \frac{(1-\kappa)p}{2} |c_n|^{p-2}d_n^2\int_{\mathbb{R}^N}|x|^{\alpha} |\nabla U_{\lambda_n}|^{p-2}  |\nabla w_n|^2dx \\
    & +\frac{(1-\kappa)p(p-2)}{2}\int_{\mathbb{R}^N}|x|^{\alpha}|\omega(c_n \nabla U_{\lambda_n},\nabla u_n)|^{p-2}(|c_n \nabla U_{\lambda_n}|-|\nabla u_n|)^2  dx\\
    & +\mathcal{C}_2d_n^{2} \int_{\mathbb{R}^N}|x|^{\alpha}\min\{d_n^{p-2}|\nabla w_n|^{p}, |c_n \nabla U_{\lambda_n}|^{p-2}|\nabla w_n|^{2}\} dx \\
    & -\left(\frac{p(p^*_{\alpha,\beta}-1)}{2}+\frac{p\kappa}{p^*_{\alpha,\beta}}\right)d_n^2
    \int_{\mathbb{R}^N}|x|^{\beta}U_{\lambda_n}^{p^*_{\alpha,\beta}-2}w_n^2 dx
    - o(d_n^2) .
    \end{align*}
    \end{small}
    Lemma \ref{lemsgap2} allows us to reabsorb the last term above: more precisely, we have
    \begin{small}
    \begin{align*}
    & \int_{\mathbb{R}^N}|x|^{\alpha}|\nabla u_n|^p dx- S_r\left(\int_{\mathbb{R}^N}|x|^{\beta}|u_n|^{p^*_{\alpha,\beta}} dx\right)^{\frac{p}{p^*_{\alpha,\beta}}} \\
    \geq & pd_n^2\left(\frac{(1-\kappa)}{2} - \frac{(p^*_{\alpha,\beta}-1)+\frac{2}{p^*_{\alpha,\beta}}\kappa}{2(p^*_{\alpha,\beta}-1)
    +2\tau }\right) \\
    & \quad \times
    \int_{\mathbb{R}^N}|x|^{\alpha} \left[|\nabla U_{\lambda_n}|^{p-2}  |\nabla w_n|^2
    +(p-2)|\omega(c_n \nabla U_{\lambda_n},\nabla u_n)|^{p-2}\left(\frac{|c_n \nabla U_{\lambda_n}|-|\nabla u_n|}{d_n}\right)^2\right]dx \\
    & + d_n^{2}\left(\mathcal{C}_2 - \gamma_0\frac{p\left[(p^*_{\alpha,\beta}-1)+\frac{2}{p^*_{\alpha,\beta}}\kappa\right]}
    {2(p^*_{\alpha,\beta}-1)+2\tau }\right)
    \int_{\mathbb{R}^N}|x|^{\alpha}\min\{d_n^{p-2}|\nabla w_n|^{p}, |c_n \nabla U_{\lambda_n}|^{p-2}|\nabla w_n|^{2}\} dx
    \\ & - o(d_n^2) .
    \end{align*}
    \end{small}
    Now, let us recall the definition of $\omega$, as stated in Lemma \ref{lemui1p}, we have
    \[
    |\nabla c_nU_{\lambda_n}|^{p-2}  |\nabla w_n|^2
    +(p-2)|\omega(c_n \nabla U_{\lambda_n},\nabla u_n)|^{p-2}\left(\frac{|c_n \nabla U_{\lambda_n}|-|\nabla u_n|}{d_n}\right)^2\geq 0,
    \]
    then  choosing $\kappa>0$ small enough such that
    \[
    \frac{(1-\kappa)}{2} - \frac{(p^*_{\alpha,\beta}-1)+\frac{2}{p^*_{\alpha,\beta}}\kappa}{2(p^*_{\alpha,\beta}-1)
    +2\tau }\geq 0,
    \]
    and then choosing $\gamma_0>0$ small enough such that
    \[
    \frac{\mathcal{C}_2}{2}\geq \gamma_0\frac{p\left[(p^*_{\alpha,\beta}-1)+\frac{2}{p^*_{\alpha,\beta}}\kappa\right]}
    {2(p^*_{\alpha,\beta}-1)+2\tau }.
    \]
    From \eqref{emn21b}, we eventually arrive at
    \begin{small}
    \begin{align}\label{emn212}
    & \int_{\mathbb{R}^N}|x|^{\alpha}|\nabla u_n|^p dx- S_r\left(\int_{\mathbb{R}^N}|x|^{\beta}|u_n|^{p^*_{\alpha,\beta}} dx\right)^{\frac{p}{p^*_{\alpha,\beta}}}
    \nonumber\\
    \geq & \frac{\mathcal{C}_2}{2}d_n^{2}\int_{\mathbb{R}^N}|x|^{\alpha}\min\{d_n^{p-2}|\nabla w_n|^{p}, |c_n \nabla U_{\lambda_n}|^{p-2}|\nabla w_n|^{2}\} dx
    - o(d_n^2) \nonumber \\
    \geq & cd_n^{2} ,
    \end{align}
    \end{small}
    for some constant $c>0$, thus the conclusion (\ref{rtnmb2}) follows immediately.
    \end{proof}

\noindent{\bf Proof of Theorem \ref{thmprtp2}.} We argue by contradiction. In fact, if the theorem is false then there exists a sequence $\{u_n\}\subset \mathcal{D}^{1,p}_{\alpha,r}(\mathbb{R}^N)\backslash \mathcal{M}$ such that
    \begin{equation*}
    \frac{\|u_n\|^p-S_r\|u_n\|^p_*}{{\rm dist}(u_n,\mathcal{M})^2}\to 0,\quad \mbox{as}\quad n\to \infty.
    \end{equation*}
    By homogeneity, we can assume that $\|u_n\|=1$, and after selecting a subsequence we can assume that ${\rm dist}(u_n,\mathcal{M})\to \xi\in[0,1]$ since ${\rm dist}(u_n,\mathcal{M})=\inf_{c\in\mathbb{R}, \lambda>0}\|u_n-cU_{\lambda}\|\leq \|u_n\|$. If $\xi=0$, then we have a contradiction by Lemma \ref{lemma:rtnm2b2}. The other possibility only is that $\xi>0$, and we also deduce a contradiction by taking the same steps as in the proof of Theorem \ref{thmprtp}. Now, the proof of Theorem \ref{thmprtp2} is complete.
    \qed

\noindent{\bfseries Acknowledgements}

The research has been supported by National Natural Science Foundation of China (No. 11971392).


\begin{thebibliography}{99}

\bibitem{AGP99}
Ambrosetti, A., Garcia Azorero, J., Peral, I.: {\em Perturbation of $\Delta u+u^{\frac{N+2}{N-2}}=0$, the scalar curvature problem in $\mathbb{R}^N$, and related topics}. J. Funct. Anal. {\bf 165}(1), 117--149 (1999)

\bibitem{AB17}
Abdellaoui, B., Bentifour, R.: {\em Caffarelli-Kohn-Nirenberg type inequalities of fractional order with applications}. J. Funct. Anal. {\bf 272}(10), 3998--4029 (2017).

\bibitem{ADG22}
Ao, W., DelaTorre, A., Gonz\'{a}lez, M.: {\em Symmetry and symmetry breaking for the fractional Caffarelli-Kohn-Nirenberg inequality}. J. Funct. Anal. {\bf 282}(11), Paper No. 109438, 58 pp (2022)

\bibitem{Au76}
Aubin, T.: {\em Problemes isoperimtriques et espaces de Sobolev}. J. Differ. Geom. {\bf 11}, 573--598 (1976)

\bibitem{BCG21}
Bonheure, D., Casteras, J., Gladiali, F.: {\em Bifurcation analysis of the Hardy-Sobolev equation}. J. Diff. Equ. {\bf 296}, 759--798 (2021)

\bibitem{BE91}
Bianchi, G., Egnell, H.: {\em A note on the Sobolev inequality}. J. Funct. Anal. {\bf 100}(1), 18--24 (1991)

\bibitem{BrL85}
Brezis H., Lieb, E.: {\em Sobolev inequalities with remainder terms}. J. Funct. Anal. {\bf 62}, 73--86 (1985)

 
\bibitem{CC22}
Ciraolo, G., Corso, R.: {\em Symmetry for positive critical points of Caffarelli-Kohn-Nirenberg inequalities}. Nonlinear Anal. {\bf 216}, Paper No. 112683, 23 pp (2022)

\bibitem{CFL21}
Cazacu, C., Flynn, J., Lam, N.: {\em Short proofs of refined sharp Caffarelli-Kohn-Nirenberg inequalities}. J. Diff. Equ. {\bf 302}, 533--549 (2021)

\bibitem{CFMP09}
Cianchi, A., Fusco, N., Maggi, F., Pratelli, A.: {\em The sharp Sobolev inequality in quantitative form}. J. Eur. Math. Soc. (JEMS) {\bf 11}(5), 1105--1139 (2009)

\bibitem{CG10}
Cowan, C., Ghoussoub, N.: {\em Estimates on pull-in distances in microelectromechanical systems models and other nonlinear eigenvalue problems}. SIAM J. Math. Anal. {\bf 42}(5), 1949--1966 (2010)

 
\bibitem{CKN84}
Caffarelli, L., Kohn R., Nirenberg, L.: {\em First order interpolation inequalities with weights}. Compos. Math. {\bf 53}, 259--275 (1984)

\bibitem{CGS89}
Caffarelli, L., Gidas, B., Spruck, J.: {\em  Asymptotic symmetry and local behavior of semilinear elliptic equations with critical Sobolev growth}. Comm. Pure Appl. Math. {\bf 42}(3), 271--297 (1989)

\bibitem{CMR22}
Catino, G., Monticelli, D., Roncoroni, A.: {\em On the critical p-Laplace equation}. Preprint. \url{arXiv:2204.06940} [math.AP]

\bibitem{DGG17}
Dancer, E. N., Gladiali, F., Grossi, M.: {\em On the Hardy-Sobolev equation}. Proc. Roy. Soc. Edinburgh Sect. A {\bf 147}(2), 299--336 (2017)

\bibitem{DLL18}
Dong, M., Lam, N., Lu, G.: {\em Sharp weighted Trudinger-Moser and Caffarelli-Kohn-Nirenberg inequalities and their extremal functions}. Nonlinear Anal. {\bf 173}, 75--98 (2018)

\bibitem{DEL16}
Dolbeault, J., Esteban, M. J., Loss, M.: {\em Rigidity versus symmetry breaking via nonlinear flows on cylinders and Euclidean spaces}. Invent. Math. {\bf 206}(2), 397--440 (2016)

\bibitem{DR01}
Damascelli L., Ramaswamy, M.: {\em  Symmetry of $C^1$-solutions of $p$-Laplace equations in $\mathbb{R}^N$}. Adv. Nonlinear Stud. {\bf 1}, 40--64 (2001)

\bibitem{Es21}
Esposito, P.: {\em Isolated singularities for the $n$-Liouville equation}. Calc. Var. Partial Differential Equations {\bf 60}(4), Paper No. 137, 17 pp (2021)

 
\bibitem{FN19}
Figalli, A., Neumayer, R.: {\em Gradient stability for the Sobolev inequality: the case $p\geq 2$}. J. Eur. Math. Soc. (JEMS) {\bf 21}(2), 319--354 (2019)

\bibitem{FZ22}
Figalli, A., Zhang Y.: {\em Sharp gradient stability for the Sobolev inequality}. Duke Math. J. {\bf 171}(12), 2407--2459 (2022)

\bibitem{FS03}
Felli, V., Schneider, M.: {\em Perturbation results of critical elliptic equations of Caffarelli-Kohn-Nirenberg type}. J. Diff. Equ. {\bf 191}, 121--142 (2003)

\bibitem{GGN13}
Gladiali, F., Grossi, M., Neves, S.: {\em Nonradial solutions for the H\'{e}non equation in $\mathbb{R}^N$}. Adv. Math. {\bf 249}, 1--36 (2013)

\bibitem{LL17}
Lam, N., Lu, G.: {\em Sharp constants and optimizers for a class of Caffarelli-Kohn-Nirenberg inequalities}. Adv. Nonlinear Stud. {\bf 17}(3), 457--480 (2017)

\bibitem{Li83}
Lieb, L.: {\em Sharp constants in the Hardy-Littlewood-Sobolev and related inequalies}. Ann. Math. {\bf 118}, 349--374 (1983)

 

\bibitem{Li85-1}
Lions, P.L.: {\em The concentration-compactness principle in the calculus of variations. The limit case. \uppercase\expandafter{\romannumeral 1}}.  Rev. Mat. Iberam. {\bf 1}(1), 145--201 (1985)

 

\bibitem{Ne20}
Neumayer, R.: {\em A note on strong-form stability for the Sobolev inequality}. Calc. Var. Partial Differential Equations {\bf 59}(1), Paper No. 25 (2020)

\bibitem{Ou22}
Ou, Q: {\em On the classification of entire solutions to the critical $p$-Laplace equation}. Preprint. \url{arXiv:2210.05141v1}

\bibitem{PT01}
Prajapat, J., Tarantello, G.: {\em On a class of elliptic problems in $\mathbb{R}^2$: symmetry and uniqueness results}. Proc. Roy. Soc. Edinburgh Sect. A {\bf 131}, 967--985 (2001)

\bibitem{PV21}
Pistoia, A., Vaira, G.: {\em Nondegeneracy of the bubble for the critical p-Laplace equation}. Proc. Roy. Soc. Edinburgh Sect. A {\bf 151}(1), 151--168 (2021)

\bibitem{Re90}
Rey, O.: {\em The role of the Green's function in a nonlinear elliptic equation involving the critical Sobolev exponent}. J. Funct. Anal. {\bf 89}, 1--52 (1990)

\bibitem{Sc16}
Sciunzi, B.: {\em Classification of positive $\mathcal{D}^{1,p}(\mathbb{R}^N)$-solutions to the critical $p$-Laplace equation in $\mathbb{R}^N$}. Adv. Math. {\bf 291}, 12--23 (2016)

\bibitem{SW22}
Su, J., Wang, C.:{\em Weighted critical exponents of Sobolev-type embeddings for radial functions}. Adv. Nonlinear Stud. {\bf 22}(1), 143--158 (2022)

\bibitem{Ta76}
Talenti, G.: {\em Best constant in Sobolev inequality}. Ann. Mat. Pura Appl. {\bf 110}, 353--372 (1976)

\bibitem{Ve16}
V\'{e}tois, J.: {\em A priori estimates and application to the symmetry of solutions for critical $p$-Laplace
equations}. J. Diff. Equ. {\bf 260}(1), 149--161 (2016)

 

\bibitem{WaWi03}
Wang, Z.-Q., Willem, M.: {\em Caffarelli-Kohn-Nirenberg inequalities with remainder terms}. J. Funct. Anal. {\bf 203}(2), 550--568 (2003)

\bibitem{WW22}
Wei, J., Wu, Y.: {\em On the stability of the Caffarelli-Kohn-Nirenberg inequality}. Math. Ann. {\bf 384}, no. 3-4, 1509--1546 (2022)



\end{thebibliography}
    \end{document}